\theoremstyle{plain}
\newtheorem{theorem}{Theorem}[section]
\newtheorem{proposition}[theorem]{Proposition}
\newtheorem{corollary}[theorem]{Corollary}
\newtheorem{conjecture}[theorem]{Conjecture}
\newtheorem{problem}[theorem]{Problem}
\theoremstyle{definition}
\newtheorem{definition}[theorem]{Definition}
\newtheorem{example}[theorem]{Example}
\newtheorem{remark}[theorem]{Remark}
\DeclareMathAlphabet\EuR{U}{eur}{m}{n}
\SetMathAlphabet\EuR{bold}{U}{eur}{b}{n}
\newcommand{\curs}{\EuR}
\newcommand{\Sets}{\curs{Sets}}
\newcommand{\co}{\colon\thinspace}
\newcommand{\mb}[1]{\mathbb{#1}}
\newcommand{\ms}[1]{\mathbf{#1}}
\newcommand{\xx}{\overline{\xi}}
\newcommand{\op}{{op}}
\newcommand{\too}{\xrightarrow}
\newcommand{\Assoc}{\mathcal{A}ssoc}
\newcommand{\Comm}{\mathcal{C}omm}
\newcommand{\cC}{\curs{C}}
\newcommand{\cD}{\curs{D}}
\newcommand{\Sp}{\curs{Sp}}
\newcommand{\Ab}{\curs{Ab}}
\newcommand{\Alg}{\curs{Alg}}
\newcommand{\LMod}{\curs{LMod}}
\newcommand{\RMod}{\curs{RMod}}
\newcommand{\Mod}{\curs{Mod}}
\newcommand{\Cat}{\curs{Cat}}
\newcommand{\Spaces}{\curs{S}} 
\newcommand{\cM}{\curs{M}}
\newcommand{\cN}{\curs{N}}
\newcommand{\oO}{\mathcal{O}}
\newcommand{\hash}{\mathbin{\#}}
\newcommand{\pro}{{\rm pro}\mbox{-}}
\DeclareMathOperator{\Ext}{Ext}
\DeclareMathOperator{\Hom}{Hom}
\DeclareMathOperator{\Map}{Map}
\DeclareMathOperator{\Tor}{Tor}
\DeclareMathOperator{\End}{End}
\DeclareMathOperator{\Fun}{Fun}
\DeclareMathOperator{\Sq}{Sq}
\DeclareMathOperator{\Pic}{Pic}
\DeclareMathOperator{\Op}{Op}
\DeclareMathOperator{\Free}{Free}
\DeclareMathOperator{\Mul}{Mul}
\DeclareMathOperator{\Sym}{Sym}
\DeclareMathOperator{\ev}{ev}
\DeclareMathOperator{\Pow}{Pow}
\DeclareMathOperator*{\colim}{colim}
\DeclareMathOperator*{\hocolim}{hocolim}
\DeclareMathOperator{\THH}{THH}
\DeclareMathOperator{\TAQ}{TAQ}
\DeclareMathOperator*{\into}{\hookrightarrow}
\title{$E_n$ ring spectra and \index{Dyer--Lashof
    operations}Dyer--Lashof operations}
\author{Tyler Lawson}
\date{}
\begin{document}

\maketitle

\section{Introduction}
\label{sec:introduction}

\index{cohomology operations}Cohomology operations are absolutely essential in making cohomology an
effective tool for studying spaces. In particular, the mod-$p$
cohomology groups of a space $X$ are enhanced with a binary cup
product, a Bockstein derivation, and Steenrod's reduced \index{power operations}power
operations; these satisfy relations such as graded-commutativity, the
\index{Cartan formula}Cartan formula, the \index{Adem relations}Adem relations, and the instability relations
\cite{steenrod-epstein}. The combined structure of these cohomology
operations is very effective in homotopy theory because of three
critical properties.
\begin{description}
\item[These operations are natural.] We can exclude the possibility of
  certain maps between spaces because they would not respect these
  operations.
\item[These operations are constrained.] We can exclude the existence
  of certain spaces because the cup product and power operations would
  be incompatible with the relations that must hold.
\item[These operations are complete.] Because cohomology is
  \emph{\index{representability!cohomology operations}representable},
  we can determine all possible
  natural operations which take an $n$-tuple of cohomology elements
  and produce a new one. All operations are built, via composition,
  from these basic operations. All relations between these operations
  are similarly built from these basic relations.
\end{description}
In particular, this last property makes the theory reversible: there
are mechanisms which take cohomology as input and converge to
essentially complete information about homotopy theory in many useful
cases, with the principal examples being the stable and unstable Adams
spectral sequences. The stable Adams spectral sequence begins with the
Ext-groups $\Ext(H^*(Y), H^*(X))$ in the category of modules with
Steenrod operations and converges to the stable classes of maps from
$X$ to a $p$-completion of $Y$ \cite{adams-stablehomotopy}. The
unstable \index{spectral sequence!Adams}Adams spectral sequence is
similar, but it begins with \index{derived functors!nonabelian}nonabelian Ext-groups that are calculated
in the category of graded-commutative rings with Steenrod operations
\cite{bousfield-kan-unstableadams, bckqrs-unstableadams}.

Our goal is to discuss multiplicative homotopy theory: spaces,
categories, or spectra with extra multiplicative structure. In this
situation, we will see that the \emph{Dyer--Lashof operations} play
the role that the Steenrod operations did in ordinary homotopy
theory.

In ordinary algebra, commutativity is an extremely useful
\emph{property} possessed by certain monoids and algebras. This is no
longer the case in multiplicative homotopy theory or category
theory. In category theory, commutativity becomes \emph{structure}: to
give symmetry to a monoidal category $\cC$ we must make a choice of a
natural twist isomorphism $\tau\co A \otimes B \to B \otimes
A$. Moreover, there are more degrees of symmetry possible than in
algebra because we can ask for weaker or stronger identities on
$\tau$. By asking for basic identities to hold we obtain the
notion of a \index{braided monoidal category}braided monoidal category, and by asking for very strong
identities to hold we obtain the notion of a symmetric monoidal
category.  In homotopy theory and higher category theory we rarely
have the luxury of imposing identities, and these become replaced by
extra structure. One consequence is that there are many degrees of
commutativity, parametrized by \index{operad}operads.

The most classical such structures arose geometrically in the study of
\index{iterated loop space}iterated loop spaces. For a pointed space $X$, the $n$-fold loop space
$\Omega^n X$ has algebraic operations parametrized by certain
\index{configuration space}configuration spaces $\mathcal{E}_n(k)$, which assemble into an
\emph{$E_n$-operad}; moreover, there is a converse theorem due to
\index{Boardman}Boardman--\index{Vogt}Vogt and \index{May}May that provides a \index{recognition principle}recognition principle for what
structure on $Y$ is needed to express it as an iterated loop space. As
$n$ grows, these spaces possess more and more commutativity, reflected
algebraically in extra \index{Dyer--Lashof operations!loop spaces}Dyer--Lashof operations on the homology $H_* Y$
that are analogous to the Steenrod operations.

In recent years there is an expanding library of examples of
\index{ring spectrum}ring spectra that only admit, or only naturally
admit, these intermediate levels of structure between associativity
and commutativity. Our goal in this chapter is to give an outline of
the modern theory of highly structured ring spectra, particularly
$E_n$ ring spectra, and to give a toolkit for their study. One of the
things that we would like to emphasize is how to usefully work in this
setting, and so we will discuss useful tools that are imparted by
$E_n$ ring structures, such as operations on them that unify the study
of Steenrod and Dyer--Lashof operations. We will also introduce the next
stage of structure in the form of secondary operations. Throughout, we
will make use of these operations to show that structured ring spectra
are heavily constrained, and that many examples do not admit this
structure; we will in particular discuss our proof in \cite{secondary}
that the $2$-primary \index{Brown--Peterson spectrum}Brown--Peterson
spectrum does not admit the structure of an $E_\infty$ \index{ring
  spectrum}ring spectrum, answering an old question of May
\cite{may-problems}. At the close we will discuss some ongoing
directions of study.

\section{Acknowledgements}

The author would like to thank
Andrew Baker,
Tobias Barthel,
Clark Barwick,
Robert Bruner,
David Gepner,
Saul Glasman,
Gijs Heuts,
Nick Kuhn,
Michael Mandell,
Akhil Mathew,
Lennart Meier,
and
Steffen Sagave
for discussions related to this material, and Haynes Miller for a
careful reading of an earlier draft of this paper. The author also owes
a significant long-term debt to Charles Rezk for what understanding he
possesses.

The author was partially supported by NSF grant 1610408 and a grant
from the Simons Foundation. The author would like to thank the Isaac
Newton Institute for Mathematical Sciences for support and hospitality
during the programme HHH when work on this paper was undertaken. This
work was supported by: EPSRC grant numbers EP/K032208/1 and
EP/R014604/1.

\section{Operads and algebras}
\label{sec:algebras}
\index{operad}
\index{algebra!over an operad}

Throughout this section, we will let $\cC$ be a fixed \index{symmetric monoidal topological category}symmetric
monoidal topological category. For us, this means that $\cC$ is
enriched in the category $\Spaces$ of spaces, that there is a functor
$\otimes\co \cC \times \cC \to \cC$ of enriched categories, and that
the underlying functor of ordinary categories is extended to a
symmetric monoidal structure. We will write $\index{$\Map$}\Map_{\cC}(X,Y)$ for the
mapping space between two objects, and $\index{$\Hom$}\Hom_{\cC}(X,Y)$ for the
underlying set. Associated to $\cC$ there is the (ordinary) \index{homotopy
category}homotopy category $h\cC$, with morphisms $[X,Y] = \pi_0 \Map_{\cC}(X,Y)$.

\subsection{Operads}
\label{sec:operads}
\index{operad|(}

Associated to any object $X \in \cC$ there is an
\emph{\index{endomorphism operad}endomorphism operad} $\End_\cC(X)$. The $k$'th term is
\[
  \Map_{\cC}(X^{\otimes k}, X),
\]
with an operad structure given by composition of functors. For any
operad $\oO$, this allows us to discuss $\oO$-algebra structures on
the objects of $\cC$, maps of $\oO$-algebras, and further structure.

If $\oO$ is the \index{associative operad}associative operad $\index{$\Assoc$}\Assoc$, then $\oO$-algebras are
monoid objects in the symmetric monoidal structure on $\cC$. If $\oO$
is the \index{commutative operad}commutative operad
$\index{$\Comm$}\Comm$, then $\oO$-algebras are strictly
commutative monoids in $\cC$. However, these operads are highly rigid
and do not take any space-level structure into account. Mapping spaces
allow us to encode many different levels of structure.
\begin{example}
  There is a sequence of operads
  $\mathcal{A}_1 \to \mathcal{A}_2 \to \mathcal{A}_3 \to \dots$ built
  out of the Stasheff \index{associahedra}associahedra \cite{stasheff-associahedra}. An
  $\index{$\mathcal{A}_n$-algebra}\mathcal{A}_2$-algebra has a unital
  binary multiplication; an $\mathcal{A}_3$-algebra has a chosen
  homotopy expressing associativity, and has \index{Massey product}Massey products; an
  $\mathcal{A}_4$-algebra has a homotopy expressing a juggling formula
  for Massey products; and so on. Moreover, each operad is simply
  built from the previous: extension from an
  $\mathcal{A}_{n-1}$-structure to an $\mathcal{A}_n$-structure
  roughly asks to extend a certain map $S^{n-3} \times X^n \to X$ to a
  map $D^{n-2} \times X^n \to X$ expressing an $n$-fold coherence law
  for the multiplication \cite{angeltveit}. This gives $\mathcal{A}_n$
  a \emph{\index{perturbative}perturbative} property: if $X \to Y$ is a homotopy
  equivalence, then $\mathcal{A}_n$-algebra structures on one space
  can be transferred to the other.
\end{example}
\begin{example}
  The colimit of the $\mathcal{A}_n$-operads is called
  $\mathcal{A}_\infty$, and it is equivalent to the associative
  operad. It satisfies a \emph{\index{rectification}rectification}
  property. In a well-behaved category like the category $\Spaces$ of
  spaces or the category $\Sp$ of spectra, any
  $\mathcal{A}_\infty$-algebra is equivalent in the homotopy category
  of $\mathcal{A}_\infty$-algebras to an associative object.
\end{example}
\begin{example}
  There is a sequence of operads
  $\mathcal{E}_1 \to \mathcal{E}_2 \to \mathcal{E}_3 \to \dots$, where
  the space $\mathcal{E}_n(k)$ is homotopy equivalent to the
  \index{configuration space}configuration space of ordered $k$-tuples of points in $\mb
  R^n$. These have various models, such as the \emph{\index{operad!little cubes/disks}little cubes} or
  \emph{little discs} operads. The $\mathcal{E}_1$-operad is
  equivalent to the associative operad, and the
  $\mathcal{E}_\infty$-operad is equivalent to the commutative
  operad. We refer to an algebra over any operad equivalent to
  $\mathcal{E}_n$ as an
  \emph{\index{$E_n$-algebra}$E_n$-algebra}. These play an important
  role in the \emph{\index{recognition principle}recognition
    principle} \cite{may-loopspaces, boardman-vogt-recognition}: given
  an $E_n$-algebra $X$ we can construct an $n$-fold \index{classifying space}classifying space
  $B^n X$; and if the binary multiplication makes $\pi_0(X)$ into a
  group then $X \simeq \Omega^n B^n X$. The relation between
  $E_n$-algebra structures and iteration of the functor $\Omega$ is closely
  related to an additivity result of \index{Dunn additivity}Dunn
  \cite{dunn-additivity}, who showed that $E_{n+1}$-algebras are
  equivalent to $E_1$-algebras in the category of $E_n$-algebras.
\end{example}
\begin{example}
  Associated to a \index{topological monoid}topological monoid $M$, there is an operad $\oO_M$
  whose only nonempty space is $\oO_M(1) = M$. An algebra over this
  operad is precisely an object with $M$-action. This operad is
  usually not \index{perturbative}perturbative. However, $M$ can be resolved by a
  \emph{cellular} topological monoid $\widetilde M \to M$ such that
  $\oO_{\widetilde M}$-algebras are perturbative and can be rectified
  to $\oO_M$-algebras. This construction is a recasting of \index{Cooke}Cooke's
  \index{obstruction theory}obstruction theory for lifting homotopy actions of a group $G$ to
  honest actions \cite{cooke-homotopyactions}; stronger versions of
  this were developed by \index{Dwyer}Dwyer--\index{Kan}Kan and \index{Badzioch}Badzioch
  \cite{dwyer-kan-liftingdiagrams, badzioch-algebraictheories}.
\end{example}
\begin{example}
  There is a free-forgetful adjunction between operads and \index{symmetric
  sequence}symmetric
  sequences. Given any sequence of spaces $Z_n$ with
  $\Sigma_n$-actions, we can construct an operad \index{operad!free}$\Free(Z)$ such that
  a $\Free(Z)$-algebra structure is the same as a collection of
  $\Sigma_n$-equivariant maps $Z_n \to \Map_{\cC}(A^{\otimes n}, A)$.

  If, further, $Z_1$ is equipped with a chosen point $e$, we can
  construct an operad $\Free(Z,e)$ such that a $\Free(Z,e)$-algebra
  structure is the same as a $\Free(Z)$-algebra structure such that
  $e$ acts as the identity: $\Free(Z,e)$ is a pushout of a diagram
  $\Free(Z) \leftarrow \Free(\{e\}) \to \Free(\emptyset)$ of operads.
\end{example}
\begin{example}
  \label{ex:cuponeoperad}
  In the previous example, let $Z_2$ be $S^1$ with the antipodal
  action of $\Sigma_2$ and let all other $Z_n$ be empty, freely
  generating an operad $\mathcal{Q}_1$ that we call the \emph{\index{operad}cup-1
    operad}. A $\mathcal{Q}_1$-algebra is an object $A$ with a
  $\Sigma_2$-equivariant map $S^1 \to \Map_\cC(A^{\otimes 2}, A)$. The
  $\Sigma_2$-equivariant cell decomposition of $S^1$ allows us to
  describe $\mathcal{Q}_1$-algebras as objects with a binary
  multiplication $m$ and a chosen homotopy from the multiplication $m$
  to the multiplication in the opposite order $m \circ \sigma$. In
  particular, any homotopy-commutative multiplication lifts to a
  $\mathcal{Q}_1$-algebra structure.
\end{example}

In the category $\Sp$ of spectra, one of the main applications of
\index{$E_n$-algebra}$E_n$-algebras is that they have well-behaved
\index{module category}categories of modules,
whose homotopy categories are triangulated categories.

\begin{theorem}[\index{Mandell}Mandell {\cite{mandell-derivedsmash}}]
  \label{thm:mandell-e4}
  An $E_1$-algebra $R$ in $\Sp$ has a category of left modules
  $\LMod_R$. An $E_2$-algebra structure on $R$ makes the homotopy
  categories of left modules and right modules equivalent, and gives
  the homotopy category of left modules a monoidal structure
  $\otimes_R$. An $E_3$-algebra structure on $R$ extends this monoidal
  structure to a \index{braided monoidal category}braided monoidal structure. An $E_4$-algebra
  structure on $R$ makes this braided monoidal structure into a
  symmetric monoidal structure.
\end{theorem}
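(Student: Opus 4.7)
The plan is to prove the four claims in parallel by bootstrapping up the operadic ladder via \emph{Dunn additivity} (mentioned in the previous section): an $E_n$-algebra in $\cC$ is the same as an $E_1$-algebra in the $\infty$-category of $E_{n-1}$-algebras in $\cC$. The strategy is to produce, for each $n \geq 1$, a functor $\Alg_{E_n}(\Sp) \to \Alg_{E_{n-1}}(\Cat_\infty)$ sending $R$ to $\LMod_R$ equipped with its relative tensor product, and then pass to the homotopy 1-category.

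For the $E_1$ case, I would build $\LMod_R$ as the stable $\infty$-category of left module spectra, either via the two-sided bar resolution or as algebras over the $\mathcal{LM}$-operad encoding a pair (ring, left module). Stability of $\Sp$ passes to $\LMod_R$, giving a triangulated homotopy category. For the $E_2$ case, view $R$ as an $E_1$-algebra in $\Alg_{E_1}(\Sp)$. The key geometric input is a path in $E_2(2) \simeq S^1$ that interchanges the two inputs, which produces an equivalence of underlying $E_1$-algebras $R \simeq R^{\op}$ and hence $\LMod_R \simeq \RMod_R$ after passing to homotopy. The monoidal product on $\LMod_R$ is the relative tensor product $M \otimes_R N$, realized as the geometric realization of the bar construction $M \otimes R^{\otimes \bullet} \otimes N$; the outer $E_1$-structure on $R$ (the one through which $R$ is an algebra in $\Alg_{E_1}$) supplies the residual $R$-action that keeps the output inside $\LMod_R$.

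For the $E_3$ and $E_4$ cases, iterate Dunn. An $E_3$-structure presents $R$ as an $E_1$-algebra in $E_2$-algebras, so the construction above, performed inside $\Alg_{E_2}(\Sp)$, exhibits $\LMod_R$ as an $E_2$-monoidal $\infty$-category. The monoidal functor to the homotopy 1-category then converts this into a braided monoidal structure on $h\LMod_R$, since an $E_2$-structure in $\Cat_1$ is precisely a braiding. Similarly, an $E_4$-algebra structure on $R$ yields an $E_3$-monoidal structure on $\LMod_R$, which on a 1-category is automatically symmetric monoidal: the double braiding corresponds to traversing the loop in $E_2(2) \simeq S^1$ twice, and the $E_3$-enhancement supplies a nullhomotopy via $E_3(2) \simeq S^2$.

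The main obstacle is the conceptual first step: actually producing the functor $\Alg_{E_n}(\Sp) \to \Alg_{E_{n-1}}(\Cat_\infty)$. Everything else is formal manipulation of Dunn additivity once this functor is in hand. Concretely, one has to show that the relative tensor product is suitably unital, associative, and functorial up to coherent homotopy, and that these coherences interact correctly with the outer $E_{n-1}$-structure on $R$. In Mandell's original paper this is carried out through a careful operadic analysis in a symmetric monoidal model of spectra; in Lurie's \emph{Higher Algebra} it is packaged via the $\mathcal{LM}$-operad together with its $E_n$-refinements, after which the result drops out of iterated Dunn additivity.
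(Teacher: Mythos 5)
Your outline is correct, but note that the paper does not prove this statement at all: it is quoted from Mandell \cite{mandell-derivedsmash}, whose original argument is quite different in flavor from yours. Mandell works in a point-set symmetric monoidal category of spectra and constructs the derived smash product $\otimes_R$ on the homotopy category directly from the operad action, using the extra little-cubes coordinates of an $E_2$- (resp.\ $E_3$-, $E_4$-) structure on $R$ to produce the associativity, braiding, and symmetry isomorphisms on bar-construction models; no $\infty$-categorical machinery or Dunn additivity is invoked. Your route --- Dunn additivity plus the fact that $\LMod_R$ of an $E_k$-algebra is an $E_{k-1}$-monoidal $\infty$-category, then truncation to the homotopy $1$-category (where $E_2$-monoidal means braided, and $E_3$-monoidal forces the double braiding to die because $\pi_1$ of the $2$-ary space of the $E_3$-operad, $S^2$, vanishes) --- is exactly the modern perspective the paper itself adopts later in \S\ref{sec:coherence}, where it cites \cite[5.1.2.6, 5.1.2.8]{lurie-higheralgebra} and remarks that Mandell's theorem ``is a reflection of higher structure on the category of left modules itself.'' The trade-off is the expected one: Mandell's argument is self-contained and lands directly on the triangulated homotopy category, while your approach requires the full weight of the $\LMod$ functor $\Alg_{E_1}(\Sp)\to\Cat_\infty$ and its lax symmetric monoidal structure (the ``main obstacle'' you correctly flag, which is genuinely the hard input and is supplied by Lurie, not by formal manipulation), but in exchange it proves the stronger, model-independent statement one level up, from which the theorem as stated follows by passing to homotopy categories. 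Your treatment of the $E_2$ case via the equivalence $R\simeq R^{\op}$ coming from the half-rotation path in $\mathcal{E}_2(2)\simeq S^1$ is also the standard and correct way to get $\LMod_R\simeq\RMod_R$.
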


\begin{theorem}
  An $E_1$-algebra $R$ in $\Sp$ has a monoidal category of \index{bimodules}bimodules.
  An $E_\infty$-algebra $R$ in $\Sp$ has a symmetric monoidal category
  of left modules.
\end{theorem}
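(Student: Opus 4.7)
My strategy is to construct the relative tensor product $\otimes_R$ via a two-sided bar construction and then bootstrap the required monoidal and symmetric-monoidal coherences from Theorem \ref{thm:mandell-e4} using Dunn additivity. The key observation is that forming $M \otimes_R N$ only requires the right action on $M$ and the left action on $N$, so an $E_1$-structure on $R$ already suffices for the tensor product operation itself, provided each side carries the opposite action needed to make the result a bimodule again.

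For the first statement, I would identify an $R$-bimodule with a left module over $R \otimes R^{\op}$ (which inherits an $E_1$-structure from $R$) and define $M \otimes_R N$ as the geometric realization of the simplicial bar construction $[n] \mapsto M \otimes R^{\otimes n} \otimes N$, whose face maps use the right $R$-action on $M$, the left $R$-action on $N$, and the multiplication on $R$. The leftover left action on $M$ and right action on $N$ descend to a bimodule structure on the realization. The unit is $R$ itself, and the identity $R \otimes_R M \simeq M$ follows from the extra-degeneracy argument for $B_\bullet(R, R, M)$; associativity follows by recognizing $(L \otimes_R M) \otimes_R N$ and $L \otimes_R (M \otimes_R N)$ as two slicings of a single bisimplicial bar construction, after a Fubini-style interchange of geometric realizations.

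For the second statement, I would use Dunn additivity iteratively: an $E_\infty$-structure on $R$ is in particular an $E_k$-structure for every $k$, and equivalently an $E_1$-algebra in the $\infty$-category of $E_\infty$-algebras. Applying Theorem \ref{thm:mandell-e4} at $k = 4$ already produces a symmetric monoidal structure on the homotopy category $h\LMod_R$, while iterating its first clause inside the $E_1$-in-$E_\infty$-algebras picture upgrades $\LMod_R$ to an $E_k$-monoidal $\infty$-category for every finite $k$, and passage to the colimit yields the desired symmetric monoidal $\infty$-category structure with tensor product $\otimes_R$ and unit $R$. The main obstacle is verifying these coherences at the $\infty$-categorical level rather than merely on homotopy categories; I would sidestep this by packaging the bar construction into a functor from $E_\infty$-algebras to symmetric monoidal $\infty$-categories via Lurie's operadic left Kan extension machinery, which produces all higher coherence data in one stroke.
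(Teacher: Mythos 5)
The paper states this theorem without proof: it is a citation-level fact (in the tradition of EKMM and Mandell, and recapitulated later in \S\ref{sec:coherence}, where the quoted results of Lurie on $\oO$-monoidal structures, $E_n$ $R$-modules, and left modules over $E_k$-algebras subsume both clauses). Your proposal is essentially the standard argument those sources give: the relative tensor product on bimodules (equivalently, left $R \otimes R^{\op}$-modules) via the two-sided bar construction, with unitality from the extra degeneracy and associativity from a bisimplicial Fubini argument, and the symmetric monoidal structure on $\LMod_R$ for $R$ an $E_\infty$-algebra supplied by the operadic machinery. One caveat: the step ``$E_k$-monoidal for every finite $k$, then pass to the colimit'' is not by itself sufficient, since an $E_\infty$-monoidal structure requires the individual $E_k$-structures to be coherently compatible, not merely to exist; you correctly flag this and defer to Lurie's construction, but note that once you invoke that machinery the Dunn-additivity iteration and the appeal to Theorem~\ref{thm:mandell-e4} at $k=4$ become superfluous, since the symmetric monoidal structure on the module category of an $E_\infty$-algebra is produced directly and recovers the homotopy-category statement by truncation.
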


\index{operad|)}
\subsection{Monads}
\label{sec:monads}
\index{monad|(}

If $\cC$ is not just enriched, but is \emph{tensored} over spaces, an
$\oO$-algebra structure on $X$ is expressible in terms internal to
$\cC$. An $\oO$-algebra structure is equivalent to having \emph{\index{action
  map}action
  maps}
\[
  \gamma_k\co \oO(k) \otimes X^{\otimes k}
  \to X
\]
that are invariant under the action of $\Sigma_k$ and respect
composition in the operad $\oO$. If $\cC$ has colimits, we can define
\emph{\index{extended power construction}extended power constructions}
\[
  \Sym^k_\oO(X) = \left(\oO(k) \otimes_{\Sigma_k}
  X^{\otimes k}\right),
\]
and an associated free $\oO$-algebra functor
\[
  \Free_\oO(X) = \coprod_{k \geq 0} \Sym^k_{\oO}(X).
\]
An $\oO$-algebra structure on $X$ is then determined by a single map
$\Free_\oO(X) \to X$. To say more, we need $\cC$ to be compatible with
\index{enriched colimit}enriched colimits in the sense of \cite[\S 3]{kelly-enriched}.
\begin{definition}
  \label{def:enriched-colimits}
  A symmetric monoidal category $\cC$ is \emph{compatible with
    enriched colimits} if the monoidal structure on $\cC$ preserves
  enriched colimits in each variable separately.
\end{definition}
Compatibility with enriched colimits is necessary to give composite
action maps
\[
  \oO(k) \otimes \left(\bigotimes_{i=1}^k \oO(n_i) \otimes X^{\otimes
      n_i}\right) \to X^{\otimes \Sigma n_i}
\]
and make them assemble into a \index{free algebra}monad structure
$\Free_\oO \circ \Free_\oO \to \Free_\oO$ on this free functor. In
this case, $\oO$-algebras are equivalent to $\Free_\oO$-algebras, and
$\Sym^k_\oO$ and $\Free_\oO$ are enriched functors.

When these functors are enriched functors, they also give rise to a
monad on the homotopy category $h\cC$. We refer to algebras over it as
\emph{\index{homotopy algebra}homotopy $\oO$-algebras}. This is
strictly stronger than being an $\oO$-algebra in the homotopy
category; the latter asks for compatible maps
$\pi_0 \oO(n) \to [A^{\otimes n}, A]$, whereas the former asks for
compatible elements in $[\oO(n) \otimes_{\Sigma_n} A^{\otimes n}, A]$
that use $\oO$ before passing to homotopy. In the case of the
\index{$E_n$-operad}$E_n$-operads, such a structure in the homotopy category is what is
classically known as an \emph{\index{$\mathcal{H}_n$-algebra}
  $\mathcal{H}_n$-algebra} \cite{bmms-hinfty}.
  
This type of structure can be slightly rigidified using pushouts of
\index{free operad}free algebras. For any operad $\oO$ with identity $e \in \oO(1)$,
we can construct a homotopy coequalizer diagram
\[
  \Free(\Free(\oO,e),e) \rightrightarrows \Free(\oO,e) \to \oO^h
\]
in the category of operads. An object $A$ has an $\oO^h$-algebra
structure if and only if there are $\Sigma_k$-equivariant maps
$\oO(k) \to \Map_{\cC}(A^{\otimes k}, A)$ so that the associativity
diagram homotopy commutes and so that $e$ acts by the identity. In
particular, $A$ has an $\oO^h$-algebra structure if and only if it has
a homotopy $\oO$-algebra structure; the $\oO^h$-structure has a
\emph{chosen} homotopy for the associativity of composition. For
example, there is an operad parametrizing objects with a unital binary
multiplication, a chosen associativity homotopy, and a chosen
commutativity homotopy.

\index{monad|)}
\subsection{Connective algebras}
\label{sec:connectivity}

In the category of spectra, the \index{Eilenberg--Mac Lane spectrum}Eilenberg--Mac Lane spectra $HA$ are
characterized by a useful mapping property. We refer to a spectrum as
\emph{\index{connective}connective} if it is $(-1)$-connected. For any connective
spectrum $X$, the natural map
\[
  \Map_\Sp(X,HA) \to \Hom_{\Ab}(\pi_0 X, A)
\]
is a weak equivalence.

This has a number of strong consequences. For example, we get an
equivalence of \index{endomorphism operad}endomorphism operads $\End_\Sp(HA) \to \End_\Ab(A)$,
obtained by taking $\pi_0$:
\[
  \End_\Sp(HA)_k = \Map(HA^{\otimes k},HA) \simeq \End_{\Ab}(A)_k =
  \Hom(A^{\otimes k}, A).
\]
Thus, an action of an operad $\oO$ on $HA$ is equivalent to an action
of $\pi_0\oO$ on $A$, and this equivalence is natural. This technique
also generalizes, using the equivalences
\[
  \Hom(H\pi_0 R^{\otimes n}, H\pi_0 R) \too{\sim} \Map(R^{\otimes n}, H
  \pi_0 R).
\]

\begin{proposition}
  Suppose $R$ is a connective spectrum and $\oO$ is an operad acting
  on $R$. Then the map $R \to H\pi_0(R)$ can be given, in a functorial
  way, the structure of a map of $\oO$-algebras.
\end{proposition}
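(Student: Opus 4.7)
The plan is to introduce, for any map $f\co X \to Y$ of spectra, a \emph{map endomorphism operad} $\End(f)$ whose $k$-th space is the homotopy pullback
\[
\End(f)_k = \Map(X^{\otimes k}, X) \times^h_{\Map(X^{\otimes k}, Y)} \Map(Y^{\otimes k}, Y),
\]
with the two maps into the base given by postcomposition with $f$ and precomposition with $f^{\otimes k}$. This assembles into an operad with natural projections $p\co \End(f) \to \End(X)$ and $q\co \End(f) \to \End(Y)$; lifting an operad map $\oO \to \End(X)$ through $p$ and then applying $q$ produces an $\oO$-algebra structure on $Y$ for which $f$ is a map of $\oO$-algebras.

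Apply this with $X = R$ and $Y = H\pi_0(R)$, and $f$ the canonical Postnikov truncation. The key step is to show that $p$ is a weak equivalence of operads in each arity, for which it suffices to check that the precomposition map
\[
(f^{\otimes k})^*\co \Map((H\pi_0 R)^{\otimes k}, H\pi_0 R) \to \Map(R^{\otimes k}, H\pi_0 R)
\]
is a weak equivalence. A standard K\"unneth argument, using connectivity of $R$, shows that both $R^{\otimes k}$ and $(H\pi_0 R)^{\otimes k}$ are connective with $\pi_0 \cong (\pi_0 R)^{\otimes k}$, and that $f^{\otimes k}$ induces the identity on $\pi_0$ under this identification. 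The mapping property of $H\pi_0 R$ recalled in the excerpt then identifies both mapping spaces with the discrete set $\Hom_{\Ab}((\pi_0 R)^{\otimes k}, \pi_0 R)$, compatibly with $(f^{\otimes k})^*$; hence the latter is an equivalence.

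Granting this, the structure map $\phi\co \oO \to \End(R)$ lifts along the weak equivalence $p$ to some $\widetilde\phi\co \oO \to \End(f)$, and $q \circ \widetilde\phi$ equips $H\pi_0(R)$ with an $\oO$-algebra structure for which $f$ is an $\oO$-algebra map. Functoriality in $R$ is handled by running the same argument on an $\oO$-algebra morphism $g\co R \to S$: replace $\End(f)$ by the analogous ``commuting square endomorphism operad'' whose $k$-th space is the homotopy limit over the square $R^{\otimes k} \to S^{\otimes k}$, $R \to S$ and their truncations. By the same connectivity-plus-mapping-property argument, projection to $\End(R \to S)$ (the square endomorphism operad of $g$ alone) is a weak equivalence, so the given $\oO$-action on $g$ lifts to the whole square and in particular induces an $\oO$-algebra map $H\pi_0(R) \to H\pi_0(S)$.

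The main obstacle is making these operadic lifts coherent as $R$ varies. In a model-categorical setting one handles this by choosing a single cofibrant replacement of $\oO$ once and for all and appealing to the lifting axiom, while in an $\infty$-categorical framework the pullback $\End(f)$ is literally an equivalence of operads under $p$, so the lift $\widetilde\phi$ is determined up to contractible choice, and functoriality drops out from the universal property of the homotopy pullback.
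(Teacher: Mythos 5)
Your argument is correct and is essentially the paper's: everything rests on the equivalence $\Map((H\pi_0 R)^{\otimes k}, H\pi_0 R) \too{\sim} \Map(R^{\otimes k}, H\pi_0 R)$ (both sides identified with $\Hom_{\Ab}((\pi_0 R)^{\otimes k}, \pi_0 R)$ via connectivity of smash powers and the Eilenberg--Mac Lane mapping property), which is exactly the displayed equivalence the paper invokes to transfer the $\oO$-action along $R \to H\pi_0(R)$. Your homotopy-pullback operad $\End(f)$ is just a convenient packaging of that transfer; the paper phrases the same mechanism through $\End_\Sp(H\pi_0 R) \simeq \End_{\Ab}(\pi_0 R)$ plus the precomposition equivalence, so the two proofs coincide in substance.
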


\begin{example}
  \label{ex:cupone}
  If $A$ is given the structure of a commutative ring,
  \index{Eilenberg--Mac Lane spectrum!ring structure}$HA$ inherits
  an essentially unique structure of an $E_\infty$-algebra. If
  $R$ is a connective and homotopy commutative \index{ring spectrum}ring spectrum, then it
  can be equipped with an action of the \index{cup-1 operad}cup-1 operad $\mathcal{Q}_1$
  from \ref{ex:cuponeoperad}. Any ring homomorphism $\pi_0 R \to A$
  lifts to a map of $\mathcal{Q}_1$-algebras $R \to HA$.
\end{example}

\subsection{Example algebras}

\begin{example}
  There exist models for the category of spectra so that the \index{function spectrum}function spectrum
  \[
    F(\Sigma^\infty_+ X, A) = A^X
  \]
  is a lax monoidal functor $\Spaces^\op \times \Sp \to \Sp$, with the
  homotopy groups of $A^X$ being the unreduced $A$-cohomology groups
  of $X$. The diagonal $\Delta$ makes any space $X$ into a commutative
  monoid in $\Spaces^\op$. If $A$ is an $\oO$-algebra in $\Sp$, then
  $A^X$ then becomes an $\oO$-algebra.
\end{example}

\begin{example}
  For any spectrum $E$, composition of functions naturally gives the
  \emph{\index{endomorphism algebra}endomorphism algebra} spectrum $\End(E) = F(E,E)$ the structure of an
  $\mathcal{A}_\infty$-algebra, and $E$ is a left module over
  $\End(E)$. The homotopy groups of $\End(E)$ are sometimes called the
  \emph{\index{Steenrod algebra}$E$-Steenrod algebra} and they
  parametrize operations on $E$-cohomology.
\end{example}

\begin{example}
  The \index{suspension spectrum}suspension spectrum functor
  \[
    X \mapsto \Sigma^\infty_+ X =\mb S[X]
  \]
  is strong symmetric monoidal. As a result, it takes $\oO$-algebras
  to $\oO$-algebras. For example, any topological group $G$ has an
  associated \emph{\index{spherical group algebra}spherical group
    algebra} $\mb S[G]$.
\end{example}

\begin{example}
  For any pointed space $X$, the $n$-fold \index{iterated loop space}
  loop space $\Omega^n X$ is an \index{$E_n$-algebra}$E_n$-algebra in spaces, and
  $\mb S[\Omega^n X]$ is an $E_n$-algebra. For any spectrum $Y$ the
  space $\Omega^\infty Y$ is an $E_\infty$-algebra in spaces, and
  $\mb S[\Omega^\infty Y]$ is an $E_\infty$-algebra.
\end{example}

\begin{example}
  The \index{Thom spectrum}Thom spectra $\index{$MO$}MO$ and $\index{$MU$}MU$ have $E_\infty$ ring structures
  \cite{may-quinn-ray-ringspectra}. At any prime $p$, $MU$ decomposes
  into a sum of shifts of the \index{Brown--Peterson spectrum}Brown--Peterson spectrum $BP$, which has
  the structure of an $E_4$-\index{ring spectrum}ring spectrum \cite{basterra-mandell-BP}.
\end{example}

\begin{example}
  The smash product being symmetric monoidal implies that it is also a
  strong symmetric monoidal functor $\Sp \times \Sp \to \Sp$. If $A$
  and $B$ are $\oO$-algebras then so is $A \otimes B$.
\end{example}

\begin{example}
  For a map $Q \to R$ of $E_\infty$ ring spectra, there is an
  \index{adjunction!free-forgetful}adjunction
  \[
    \Mod_Q \rightleftarrows \Mod_R
  \]
  between the extension of scalars functor  $M \mapsto R \otimes_Q M$
  and the forgetful functor. The left adjoint is strong symmetric
  monoidal and the right adjoint is lax symmetric monoidal, and hence
  both functors preserve $\oO$-algebras.

  This allows us to narrow our focus. For example, if $E$ has an
  $E_\infty$-algebra structure and we are interested in understanding
  operations on the $E$-homology of $\oO$-algebras, we can restrict
  our attention to those operations on the homotopy groups of
  $\oO$-algebras in $\Mod_E$ rather than considering all possible
  operations on the $E$-homology.
\end{example}

\subsection{Multicategories}
\label{sec:multicategories}
\index{multicategory|(}

A \emph{multicategory} (or \index{colored operad|see {multicategory}}colored operad) encodes the structure of a
category where functions have multiple input objects. They serve as a
useful way to encode many multilinear structures in stable homotopy
theory: multiplications, module structures, graded rings, and
coherent structures on categories. In this section we will give a quick
introduction to them, and will return in \S\ref{sec:coherence}.

\begin{definition}
  A multicategory $\cM$ consists of the following data:
  \begin{enumerate}
  \item a collection $Ob(\cM)$ of objects;
  \item a set $\index{$\Mul$}\Mul_\cM(\ms x_1,\dots,\ms x_d;\ms y)$ of
    \emph{\index{multimorphism}multimorphisms} for any objects
    $\ms x_i$ and $\ms y$ of $\cM$, or more generally a set
    $\Mul_\cM(\{\ms x_s\}_{s \in S}; \ms y)$ for any finite set $S$
    and objects $\ms x_s$, $\ms y$;
  \item \index{composition!of multimorphisms}composition operations
    \[
      \circ\co \Mul_\cM(\{\ms y_t\}_{t \in T};\ms z) \times \prod_{t \in T}
      \Mul_\cM(\{\ms x_s\}_{s \in f^{-1} (t)}; \ms y_t)\to
      \Mul_{\cM}(\{\ms x_s\}_{s \in S}; \ms z)
    \]
    for any map $f\co S \to T$ of finite sets and objects $\ms x_s$,
    $\ms y_t$, and $\ms z$ of $\cM$; and
  \item identity morphisms $\mathrm{id}_X \in \Mul_\cM(\ms x; \ms
    x)$ for any object $\ms x$.
  \end{enumerate}
  These are required to satisfy two conditions:
  \begin{enumerate}
  \item unitality: $\mathrm{id}_{\ms y} \circ g = g \circ
    (\mathrm{id}_{\ms x_s}) = g$ for any $g \in \Mul_{\cM}(\{\ms
    x_s\}_{s \in S}; \ms Y)$; and
  \item associativity: $h \circ (g_u \circ (f_t)) = (h \circ (g_u))
    \circ f_t$ for any $S \to T \to U$ of finite sets.
  \end{enumerate}
  The \emph{\index{underlying category}underlying ordinary category} of $\cM$ is the category
  with the same objects as $\cM$ and
  $\Hom_{\cM}(\ms x,\ms y) = \Mul_{\cM}(\ms x; \ms y)$.
  
  If the sets of multimorphisms are given topologies so that
  composition is continuous, we refer to $\cM$ as a
  \emph{\index{multicategory!topological}topological multicategory}.

  A (topological) \emph{\index{multifunctor}multifunctor}
  $F\co \cM \to \cN$ is a map $F\co Ob(\cM) \to Ob(\cN)$ on the level
  of objects, together with (continuous) maps
  \[
    \Mul_\cM(\ms x_1,\dots,\ms x_d; \ms y) \to
    \Mul_\cM(F \ms x_1,\dots,F \ms x_d; F \ms y)
  \]
  that preserve identity morphisms and composition.
\end{definition}

\begin{example}
  An \index{operad}operad is equivalent to a single-object multicategory. For any
  object $\ms x$ in a multicategory $\cM$, the full sub-multicategory
  spanned by $\ms x$ is an operad called the endomorphism operad of
  $\ms x$.
\end{example}

\begin{example}
  A \index{symmetric monoidal topological category}symmetric monoidal topological category $\cM$ can be regarded as a
  multicategory by defining
  \[
    \Mul_{\cM}(X_1,\dots,X_d; Y) = \Map_\cM(X_1 \otimes \dots \otimes X_d,  Y).
  \]
  This recovers the definition of the \index{endomorphism operad}endomorphism operad of an object
  $X$.
\end{example}

The notion of an algebra over a multicategory will extend the notion
of an algebra over an operad.

\begin{definition}
  \label{def:multialgebras}
  For (topological) multicategories $\cM$ and $\cC$, the category
  $\index{$\Alg$}\Alg_\cM(\cC)$ of $\cM$-\index{algebra!over a multicategory}algebras in $\cC$ is the category of (topological)
  multifunctors $\cM \to \cC$ and natural transformations.

  For any object $\ms x \in \cM$, the \emph{\index{evaluation functor}evaluation} functor
  $\ev_{\ms x}\co \Alg_\cM(\cC) \to \cC$ sends an algebra $A$ to the
  value $A(\ms x)$.
\end{definition}

\begin{example}
  \label{ex:ringmodule}
  The multicategory $\index{$\Mod$}\Mod$ parametrizing ``ring-module
  pairs'' has two objects, $\ms a$ and $\ms m$, and
  \[
    \Mul_{\Mod}(\ms x_1,\dots,\ms x_d;\ms y) =
    \begin{cases}
      \ast &\text{if }\ms y = \ms a\text{ and all }\ms x_i\text{ are
      }\ms a,\\
      \ast &\text{if }\ms y = \ms m\text{ and exactly one }\ms
      x_i\text{ is }\ms m,\\
      \emptyset &\text{otherwise.}
    \end{cases}
  \]
  A multifunctor $\Mod \to \cC$ is equivalent to a pair $(A,M)$
  of a commutative monoid $A$ of $\cC$ and an object $M$ with an
  action of $A$.
\end{example}

\begin{example}
  \label{ex:gradedalgebras}
  A commutative monoid $\Gamma$ can be regarded as a symmetric
  monoidal category with no non-identity morphisms, and in the
  associated multicategory we have
  \[
    \Mul_\Gamma(g_1,\dots,g_d; g) = \begin{cases}
      \ast &\text{if }\sum g_s = g,\\
      \emptyset &\text{otherwise.}
    \end{cases}
  \]
  A multifunctor $\Gamma \to \cC$ determines objects $X_g$ of $\cC$, a
  map from the unit to $X_0$, and multiplication maps
  $X_{g_1} \otimes \dots \otimes X_{g_d} \to X_{g_1 + \dots + g_d}$:
  these multiplications are collectively unital, symmetric, and
  associative. We refer to such an object
  as a \index{graded algebra}$\Gamma$-graded
  commutative monoid.
\end{example}

\begin{example}
  \label{ex:filteredalgebras}
  The addition of natural numbers makes the partially ordered set
  $(\mb N, \geq)$ into a symmetric monoidal category. In the
  associated multicategory we have
  \[
    \Mul_{\mb N}(n_1,\dots,n_d; m) = \begin{cases}
      \ast &\text{if }\sum n_i \geq m,\\
      \emptyset &\text{otherwise.}
    \end{cases}
  \]
  A multifunctor $\Gamma \to \cC$ determines a sequence of objects
  \[
    \dots \to X_2 \to X_1 \to X_0
  \]
  of $\cC$ and multiplication maps $X_{n_1} \otimes \dots \otimes
  X_{n_d} \to X_{n_1 + \dots + n_d}$:
  these multiplications are collectively unital, symmetric, and
  associative, as well as being compatible with the inverse system. We
  refer to such an object \emph{\index{filtered algebra}strongly filtered} commutative monoid
  in $\cC$.
\end{example}

\begin{remark}
  If $\cM_1$ and $\cM_2$ are multicategories, there is a
  \index{product!of multicategories}product
  multicategory $\cM_1 \times \cM_2$, obtained by taking products of
  objects and products of multimorphism spaces. Products allow us to
  extend the above constructions. For example, taking the product of
  an operad $\oO$ with the multicategories of the previous examples,
  we construct multicategories that parametrize: pairs $(A,M)$ of an
  $\oO$-algebra and an $\oO$-module; $\Gamma$-graded $\oO$-algebras;
  and strongly filtered $\oO$-algebras.
\end{remark}

\begin{example}
  \label{ex:operationcat}
  Let $\cM$ be the multicategory whose objects are integers, and
  define $\Mul_{\cM}(m_1,\dots,m_d; n)$ to be the set of natural
  transformations
  \[
    \theta\co H^{m_1}(X) \times \dots \times H^{m_d}(X) \to H^n(X)
  \]
  of contravariant functors on the category $\Spaces$ of spaces; composition is
  composition of natural transformations. The category $\cM$ is a
  category of multivariate \index{cohomology operations}cohomology operations. Any fixed space $X$
  determines an \index{evaluation functor}evaluation multifunctor $\ev_X\co \cM \to \Sets$,
  sending $n$ to $H^n(X)$; any homotopy class of map $X \to Y$ of
  spaces determines a natural transformation of multifunctors in the
  opposite direction. Stated concisely, this is a functor
  \[
    h\Spaces^\op \to \Alg_{\cM}(\Sets)
  \]
  that takes a space to an encoding of its cohomology groups and cohomology operations.
  
  More generally, a category $\cD$ with a chosen set of functors
  $\cD \to \Sets$ determines a multicategory $\cM$ spanned by them: we
  can define $\Mul(F_1,\dots,F_d;G)$ to be the set of natural
  transformations $\prod F_i \to G$, so long as there is always a set
  (rather than a proper class) of natural transformations. If we view
  a functor $F$ as assigning an invariant to each object of $\cD$, a
  multimorphism $\prod F_i \to G$ is a natural operation of several
  variables on such invariants. Evaluation on objects of $\cD$ takes
  the form of a functor
  \[
    \cD \to \Alg_{\cM}(\Sets),
  \]
  encoding both the invariants assigned by these functors and the
  natural operations on them. These are examples of
  \emph{\index{multi-sorted theories}multi-sorted algebraic
    theory} in the sense of \index{Bergner}Bergner \cite{bergner-multisorted},
  closely related to the work of \cite{borger-wieland-plethystic,
    stacey-whitehouse-hopfring}. We will return to the discussion of
  this structure in \S\ref{sec:operators}.
\end{example}

Just as with ordinary operads, there are often free-forgetful
adjunctions between objects of $\cC$ and algebras over a
multicategory.

\begin{proposition}
  Suppose that $\cM$ is a small topological multicategory and that
  $\cC$ is a symmetric monoidal topological category with compatible
  colimits in the sense of Definition~\ref{def:enriched-colimits}.
  \begin{enumerate}
  \item For objects $\ms x$ and $\ms y$ of $\cM$, there are \index{extended power construction}extended power
    functors
    \[
      \Sym^k_{\cM, \ms x \to \ms y}\co \cC_{\ms x} \to \cC_{\ms y},
    \]
    given by
    \[
      \Sym^k_{\cM, \ms x \to \ms y} (X) = \Mul_\cM(\underbrace{\ms
        x, \ms x, \dots, \ms x}_{k}; \ms y) \otimes_{\Sigma_k} X^{\otimes k}.
    \]
  \item The evaluation functor
    $\ev_{\ms x}\co \Alg_\cM(\cC) \to \cC$ has a left adjoint
    \[
      \Free_{\cM,\ms x}\co \cC \to \Alg_{\cM}(\cC).
    \]
    The value of \index{free algebra}$\Free_{\cM,\ms x}(X)$ on any object $\ms y$ of $\cM$ is
    \[
      \ev_{\ms y} (\Free_{\cM,\ms x}(X)) = \coprod_{k \geq 0}
      \Sym^k_{\cM, \ms x \to \ms y}(X).
    \]
  \end{enumerate}
\end{proposition}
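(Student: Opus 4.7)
The plan is to treat part (1) as essentially a construction and focus the real work on verifying the adjunction in part (2). For part (1), I would first observe that the symmetric group $\Sigma_k$ acts on $\Mul_\cM(\ms x,\dots,\ms x;\ms y)$ via the permutation symmetries built into the definition of a multicategory (reindexing the finite set $S$ with $k$ elements), and it also acts on $X^{\otimes k}$ through the symmetric monoidal structure on $\cC$. The enriched tensor of a space with an object of $\cC$, together with coequalizers of the diagonal $\Sigma_k$ action, gives the stated formula for $\Sym^k_{\cM,\ms x\to\ms y}(X)$. Functoriality in $X$ is automatic. The compatibility of $\otimes$ with enriched colimits (Definition~\ref{def:enriched-colimits}) is what makes this construction well-behaved.

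For part (2), I would define $\Free_{\cM,\ms x}(X)$ objectwise by the stated coproduct and then equip it with the structure of a multifunctor. Given a multimorphism $\phi\in \Mul_\cM(\ms y_1,\dots,\ms y_d;\ms z)$, I need a map
\[
\bigotimes_{i=1}^d \Bigl(\coprod_{k_i\geq 0}\Sym^{k_i}_{\cM,\ms x\to\ms y_i}(X)\Bigr)\longrightarrow \coprod_{k\geq 0}\Sym^k_{\cM,\ms x\to\ms z}(X).
\]
Using that $\otimes$ preserves coproducts in each variable (again via compatibility with enriched colimits), this reduces to maps indexed by tuples $(k_1,\dots,k_d)$, with target the summand $k=\sum k_i$. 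The required map is induced by the composition operation in $\cM$, which sends $\phi$ paired with multimorphisms $\psi_i\in\Mul_\cM(\ms x,\dots,\ms x;\ms y_i)$ to a multimorphism $\phi\circ(\psi_1,\dots,\psi_d)\in\Mul_\cM(\ms x,\dots,\ms x;\ms z)$ of arity $k$. Associativity and unitality of this assignment follow from the corresponding axioms in $\cM$; equivariance under the relevant symmetric groups follows by bookkeeping of the $\Sigma_{k_i}$-action on composition in $\cM$, after which one descends to the $\Sigma_k$-coinvariants on the target.

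Next I would produce the unit $\eta\co X\to \ev_{\ms x}\Free_{\cM,\ms x}(X)$ from the $k=1$ summand, using the identity multimorphism $\mathrm{id}_{\ms x}\in\Mul_\cM(\ms x;\ms x)$: this gives a map $X\simeq \{\mathrm{id}_{\ms x}\}\otimes X \to \Sym^1_{\cM,\ms x\to\ms x}(X)$. To verify the adjunction, given any $\cM$-algebra $A$ and a morphism $f\co X\to A(\ms x)$ in $\cC$, I would construct $\tilde f\co \Free_{\cM,\ms x}(X)\to A$ by defining, on each $\ms y$ and each $k$, the composite
\[
\Mul_\cM(\ms x,\dots,\ms x;\ms y)\otimes X^{\otimes k}\xrightarrow{1\otimes f^{\otimes k}} \Mul_\cM(\ms x,\dots,\ms x;\ms y)\otimes A(\ms x)^{\otimes k}\xrightarrow{\gamma_A}A(\ms y),
\]
where $\gamma_A$ is the action supplied by $A$ being a multifunctor. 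These factor through the $\Sigma_k$-coinvariants because $A$ is a multifunctor, assemble into a map of multifunctors because of associativity of the action, and satisfy $\ev_{\ms x}(\tilde f)\circ \eta = f$. Conversely, any algebra map $g\co \Free_{\cM,\ms x}(X)\to A$ restricts via $\eta$ to a morphism $X\to A(\ms x)$, and one checks these two assignments are mutually inverse by re-expressing $g$ on the $k$-th summand in terms of the composition axiom for the multifunctor $g$.

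The main obstacle I expect is purely bookkeeping: keeping the $\Sigma_k$-equivariance straight when one factors composition $\Mul_\cM(\ms y_1,\dots,\ms y_d;\ms z)\times\prod\Mul_\cM(\ms x,\dots,\ms x;\ms y_i)\to \Mul_\cM(\ms x,\dots,\ms x;\ms z)$ through the product $\prod \Sigma_{k_i}\subset \Sigma_k$ and then descends to $\Sigma_k$-coinvariants on the coproduct over $k$. Verifying that the adjunction bijection is a bijection of underlying sets (rather than merely a natural map) reduces, after this bookkeeping, to the universal property of coequalizers and coproducts in $\cC$, which is where the hypothesis on compatibility with enriched colimits is used one final time.
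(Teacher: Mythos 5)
Your proposal is correct, and in fact the paper offers no written proof of this proposition at all --- it is stated and followed only by a remark that it generalizes the extended-power and free-algebra constructions of \S\ref{sec:monads} --- so your argument (the $\Sigma_k$-reindexing action on $\Mul_\cM(\ms x,\dots,\ms x;\ms y)$, the objectwise coproduct made into a multifunctor via composition in $\cM$ descending along $\prod_i\Sigma_{k_i}\subset\Sigma_k$, the unit from $\mathrm{id}_{\ms x}$ in the $k=1$ summand, and the adjunction check against the action maps of $A$) is exactly the standard construction the paper has in mind. The only point worth tightening is that for \emph{topological} multifunctors the structure maps should be packaged adjointly as continuous maps $\Mul_\cM(\ms y_1,\dots,\ms y_d;\ms z)\otimes\bigotimes_i \ev_{\ms y_i}\Free_{\cM,\ms x}(X)\to \ev_{\ms z}\Free_{\cM,\ms x}(X)$, so that continuity in the multimorphism variable is part of the data (and likewise uniqueness in the adjunction uses naturality of the algebra map against these multimorphisms, not a ``composition axiom for $g$'' per se); with that phrasing your equivariance and adjunction bookkeeping, which relies on compatibility with enriched colimits to distribute $\otimes$ over coproducts and coinvariants, goes through as written.
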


\begin{remark}
  These generalize the constructions of extended powers and free
  algebras from \S \ref{sec:monads}. If $\cM$ has a single object
  $\ms x$, encoding an operad $\oO$, then
  $\Sym^k_{\cM, \ms x \to \ms x} = \Sym^k_\oO$ and
  $\Free_{\cM, \ms x}$ encodes $\Free_\oO$.
\end{remark}

\begin{example}
  The free $\mb Z$-\index{graded algebra}graded commutative monoid on an object $X$ in degree
  $n \neq 0$ is equal to the symmetric product $\Sym^k(X)$ in degree $kn$
  for $k \geq 0$. All other gradings are the initial object.
\end{example}

\begin{example}
  The free \index{filtered algebra}strongly filtered commutative
  monoid on an object $X_1$ in degree $1$ is a filtered object of the
  form
  \[
    \dots \to \coprod_{k \geq 2} \Sym^k X_1 \to \coprod_{k \geq 1}
    \Sym^k X_1 \to \coprod_{k \geq 0} \Sym^k X_1.
  \]
  If we have a strongly filtered commutative algebra
  $\dots \to X_2 \to X_1 \to X_0$, then this gives action maps
  $\Sym^k X_1 \to X_k$. More generally, there are action maps
  $\Sym^k X_n \to X_{kn}$ that are compatible in $n$.
\end{example}

\index{multicategory|)}
\section{Operations}
\label{sec:operations}
\index{operations|(}

In this section we will fix a spectrum $E$, viewed as a coefficient
object.

\subsection{$E$-homology and $E$-modules}

We can study $\oO$-algebras through their $E$-homology.

\begin{definition}
  Given a spectrum $E$, an \emph{$E$-homology
    \index{homology operations}operation} for $\oO$-algebras is a
  natural transformation of functors $\theta\co E_m(-) \to E_{m+d}(-)$
  of functors on the homotopy category of $\oO$-algebras.
\end{definition}

Such operations can be difficult to classify in general. However, if
$E$ has a commutative ring structure then we can do more. In this
case, any $\oO$-algebra $A$ has an \emph{$E$-\index{homology
    object}homology object} $E \otimes A$ which is an $\oO$-algebra in
$\Mod_E$, and any space $X$ has an \emph{$E$-\index{cohomology
    object}cohomology object} $E^X$ which is an $E_\infty$-algebra
object in $\Mod_E$. By definition, we have
\[
  E_m(A) = [S^m, E \otimes A]_{\Sp}
\]
and 
\[
  E^m(X) = [S^{-m}, E^X]_{\Sp}.
\]
Therefore, we can construct natural operations on the $E$-homology of
$\oO$-algebras or the $E$-cohomology of spaces by finding natural
operations on the homotopy groups of $\oO$-algebras in $\Mod_E$.

\begin{example}
  If $X$ is an $\oO$-algebra in spaces, then
  $E[X] = E \otimes \Sigma^\infty_+ X$ is an $\oO$-algebra in
  $\Mod_E$.
\end{example}

\subsection{Multiplicative operations}

In this section we will construct our first operations on the
homotopy groups of $\oO$-algebras over a fixed commutative \index{ring spectrum}ring spectrum $E$.

The functor $\pi_*$ from the homotopy category of spectra to graded
abelian groups is lax symmetric monoidal under the Koszul \index{signs}sign
rule. The induced functor $\pi_*$ from $\Alg_\oO(\Sp)$ or
$\Alg_\oO(\Mod_E)$ to graded abelian groups naturally takes values in
the category of graded abelian groups, or graded $E_*$-modules, with
an action of the operad $\pi_0 \oO$ in sets.

\begin{example}
  In the case of an $E_n$-operad, $\pi_0 \oO$ is isomorphic to the
  \index{associative operad}associative operad when $n=1$ and the \index{commutative operad}commutative operad when
  $n \geq 2$. The $E$-homology groups of an $E_n$-algebra in $\Sp$ form a
  graded $E_*$-algebra. If $n \geq 2$, this algebra is
  graded-commutative.
\end{example}

By applying $E_*$ to the \index{action map}action maps in the operad, we stronger
information.

\begin{proposition}
  The homology groups $E_* \oO(k)$ form an operad $E_* \oO$ in graded
  $E_*$-modules, and the functor $\pi_*$ from $\Alg_\oO(\Mod_E)$ to
  graded abelian groups has a natural lift to the category of graded
  $E_* \oO$-modules.
\end{proposition}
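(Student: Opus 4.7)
The strategy is to exploit the fact that, because $E$ is a commutative ring spectrum, the functor $E_*(-) = \pi_*(E \otimes -)$ from $\Sp$ to graded $E_*$-modules is lax symmetric monoidal via the K\"unneth map $E_*(X) \otimes_{E_*} E_*(Y) \to E_*(X \otimes Y)$, and similarly the functor $\pi_*\co \Mod_E \to E_*\text{-}\mathrm{mod}$ is lax symmetric monoidal with respect to $\otimes_E$. Lax symmetric monoidal functors automatically carry operad structures to operad structures and algebra structures to algebra structures; both halves of the proposition are obtained by applying this general principle.

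For the first claim, I would apply the composite of $\Sigma^\infty_+$, smashing with $E$, and $\pi_*$ to the structure of $\oO$ as an operad in spaces. Writing $\gamma$ for the composition in $\oO$, the lax monoidal structure produces maps
\[
  E_*\oO(k) \otimes_{E_*} E_*\oO(n_1) \otimes_{E_*} \cdots \otimes_{E_*} E_*\oO(n_k) \to E_*\oO(n_1+\cdots+n_k)
\]
together with the image of the identity in $E_0\oO(1)$ and the $\Sigma_k$-actions on each $E_*\oO(k)$. The equivariance, unitality, and associativity axioms in $E_*\oO$ are then deduced from the corresponding axioms for $\oO$ by combining the naturality of the K\"unneth transformation with its compatibility with the associativity and symmetry isomorphisms of $\otimes_{E_*}$ and $\otimes$.

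For the second claim, given an $\oO$-algebra $A$ in $\Mod_E$ with action maps $\gamma_k\co \oO(k) \otimes A^{\otimes_E k} \to A$, I would apply $\pi_*$ and compose with the iterated relative K\"unneth map to obtain
\[
  E_*\oO(k) \otimes_{E_*} \pi_*(A)^{\otimes_{E_*} k} \to \pi_*(A).
\]
These maps are $\Sigma_k$-equivariant and, again by naturality of the K\"unneth map, compatible with the operadic composition built in the previous step; the identity in $E_0\oO(1)$ acts trivially. This exhibits $\pi_*(A)$ as an algebra over $E_*\oO$ in graded $E_*$-modules, and the construction is plainly functorial in maps of $\oO$-algebras in $\Mod_E$, so it assembles into the asserted lift of $\pi_*$.

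There is no real obstacle here; the content is bookkeeping rather than a calculation. The one point to watch is iterating the lax monoidal structure correctly so that the associativity pentagon and the $\Sigma_k$-symmetry square for K\"unneth both commute, ensuring that the operad axioms for $E_*\oO$ and the action axioms on $\pi_*(A)$ really follow from those on $\oO$ and $A$ rather than requiring extra homotopies. Signs enter only through the standard Koszul rule already built into $\pi_*$ as a lax symmetric monoidal functor, so nothing further is needed beyond tracking them.
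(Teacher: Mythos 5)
Your argument is correct and coincides with the paper's (implicit) approach: the paper offers no separate proof, relying on exactly the observation stated just beforehand that $\pi_*$ (and hence $E_*(-)=\pi_*(E\otimes-)$) is lax symmetric monoidal under the Koszul sign rule, so applying it to the operad composition and action maps yields the operad $E_*\oO$ and the natural $E_*\oO$-algebra structure on $\pi_*$ of an $\oO$-algebra in $\Mod_E$. Your K\"unneth-map bookkeeping is precisely this lax monoidal structure, so nothing is missing.
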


\begin{example}
  \index{bracket!Browder|see{Browder bracket}}
  \index{Browder bracket|(}
  The homotopy groups of $E_n$-algebras have a natural bilinear
  \emph{Browder bracket}
  \[
    [-,-]\co \pi_q(A) \otimes \pi_r(A) \to \pi_{q + (n-1) + r}(A).
  \]
  This satisfies the following formulas.
  \begin{quote}
    \begin{description}
    \item[\index{antisymmetry}Antisymmetry:]
      $[\alpha,\beta] = - (-1)^{(|\alpha| + n-1)(|\beta| + n-1)}
      [\beta,\alpha]$.
    \item[\index{Leibniz rule}Leibniz rule:]
      $[\alpha, \beta \gamma] = [\alpha, \beta] \gamma +
      (-1)^{|\beta|(|\alpha| + n-1)} \alpha [\beta, \gamma].$
    \item[Graded \index{Jacobi identity}Jacobi identity:]
      \begin{align*}
        0 = &\phantom{+\ }(-1)^{(|\alpha| + n-1)(|\gamma| + n-1)}
              [\alpha, [\beta, \gamma]]\\
            &+ (-1)^{(|\beta| + n-1)(|\alpha| + n-1)} [\beta, [\gamma, \alpha]]\\
            &+ (-1)^{(|\gamma| + n-1)(|\beta| + n-1)} [\gamma, [\alpha, \beta]].
      \end{align*}
    \end{description}
  \end{quote}
  In the case of $E_1$-algebras, this reduces to the ordinary bracket
  \[
    [\alpha, \beta] = \alpha \beta - (-1)^{|\alpha| |\beta|} \beta\alpha
  \]
  in the graded ring $\pi_* (A)$.

  The Browder bracket is defined, just as it was defined in homology
  \cite{cohen-lada-may-homology}, using the image of the generating
  class $\lambda \in \pi_{n-1} \mathcal{E}_{n}(2) \cong \pi_{n-1} S^{n-1}$
  coming from the \index{operad!little cubes/disks}little cubes operad.  The antisymmetry and Jacobi
  identities are obtained by verifying identities in the graded operad
  $\pi_* (\Sigma^\infty_+ \mathcal{E}_{n})$. For example, if $\sigma$ is
  the 2-cycle in $\Sigma_2$ we have
  \[
    \lambda \circ \sigma = (-1)^n \lambda,
  \]
  and if $\tau$ is a 3-cycle in $\Sigma_3$ we have
  \[
    \lambda \circ (1 \otimes \lambda) \circ (1 + \tau + \tau^2) =
    0.
  \]
  However, the \index{signs}signs indicate that there is some care to be taken. In
  particular, the Browder bracket of elements $\alpha \in \pi_q(A)$
  and $\beta \in \pi_r(A)$ is defined to be the following composite:
  \begin{align*}
    S^q \otimes S^{n-1} \otimes S^r
    &\to A \otimes \Sigma^\infty_+ \mathcal{E}_{n}(2) \otimes A \\
    &\to \Sigma^\infty_+ \mathcal{E}_{n}(2) \otimes A \otimes A \\
    &\to A
  \end{align*}
  This order is chosen because it is more consistent with writing the
  Browder bracket as an inline binary operation $[x,y]$ than with
  writing it as an operator $\lambda (x,y)$ on the left. The
  subscript on the range $\pi_{q + (n-1) + r}(A)$ reflects this
  choice (cf. \cite{ni-bracket}). This gives us the definition
  \[
    [\alpha, \beta] = (-1)^{(n-1)|\alpha|} \gamma(\lambda \otimes
    \alpha \otimes \beta),
  \]
  where $\gamma$ is the action map of the operad
  $\pi_* (\Sigma^\infty_+ \mathcal{E}_{n})$ on $\pi_* A$. Both the
  verification of the identities on $\lambda$ in the stable homotopy
  groups of configuration spaces, and the verification of the
  consequent antisymmetry, Leibniz, and Jacobi identities, are
  reasonable but error-prone exercises from this point; compare
  \cite{cohen-configurationspaces}.
  \index{Browder bracket|)}
\end{example}

\subsection{Representability}
\label{sec:representability}

We will ultimately be interested in natural operations on homotopy and
homology groups. However, it is handy to use a more general definition
that replaces $S^m$ by a general object. This accounts for the
possibility of operations of several variables, and can also help
reduce difficulties involving naturality in the input $S^m$.

\begin{definition}
  For spectra $M$ and $X$, we define the $M$-\index{indexed homotopy
    groups}indexed homotopy of $X$ to be
  \[
    \pi_M(X) = [M,X]_{\Sp} \cong [E \otimes M, X]_{\Mod_E}.
  \]

  For spectra $M$, $X$, and $E$ we define the $M$-indexed
  $E$-\index{indexed homology groups}homology of $X$ to be
  \[
    E_M(X) = \pi_M(E \otimes X).
  \]
  If $M$ is $S^m$, we instead use the more standard notation
  $\pi_m(-)$ for $\pi_{S^m}(-)$ or $E_m(-)$ for
  $E_{S^m}(-)$.
\end{definition}

\begin{definition}
  Let $E$ be a commutative ring spectrum. A \emph{\index{homotopy operations}homotopy operation}
  for $\oO$-algebras over $E$ is a natural transformation
  \[
    \theta\co \pi_M \to \pi_N
  \]
  of functors on the homotopy category of $\Alg_\oO(\Mod_E)$. When
  $\oO$ and $E$ are understood, we just refer to such natural
  transformations as homotopy operations.

  We refer to the resulting operation $E_M(-) \to E_N(-)$ on the
  $E$-homology groups of $\oO$-algebras as the induced
  $E$-\index{homology operations}homology operation.
\end{definition}

As in Example~\ref{ex:operationcat}, we can assemble operations with
varying numbers of inputs into an algebraic structure.
\begin{definition}
  Fix an operad $\oO$ and a commutative ring spectrum $E$.  The
  multicategory $\index{$\Op$}\Op^E_\oO$ of \emph{operations} for
  $\oO$-algebras in $\Mod_E$ has, as objects, spectra $N$. For any
  $M_1,\dots,M_d$ and $N$, the group of multimorphisms
  \[
    \Op^E_\oO(M_1, \dots, M_d; N)
  \]
  is the group of natural transformations $\prod \pi_{M_i} \to \pi_N$
  of functors $h\Alg_\oO(\Mod_E) \to \Sets$. If $E$ or $\oO$ are
  understood, we drop them from the notation.

  In the unary case, we write $\index{$\Op$}\Op^E_\oO(M; N)$ for the set of
  homotopy operations $\pi_M \to \pi_N$ for $\oO$-algebras in
  $\Mod_E$.
\end{definition}

The free-forgetful adjunction between spectra and $\oO$-algebras in
$\Mod_E$ allows us to exhibit the functor $\pi_M$ as
representable.

\begin{proposition}
  \index{representability!homotopy groups}Suppose that $E$
  is a commutative ring spectrum, $\oO$ is an operad with associated
  \index{free algebra}free algebra \index{monad}monad $\Free_\oO$. Then there is a natural isomorphism
  \[
    \pi_M(A) \cong [E \otimes \Free_\oO(M), A]_{\Alg_\oO(\Mod_E)}
  \]
  for $A$ in the homotopy category of $\Alg_\oO(\Mod_E)$. In
  particular, the object $E \otimes \Free_\oO(M)$ is a representing
  object for the functor $\pi_M$.
\end{proposition}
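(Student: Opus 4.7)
The plan is to obtain the representability by composing three adjunctions. First, by the definition of $\pi_M$ (and the remark after it that $[M,X]_{\Sp} \cong [E \otimes M, X]_{\Mod_E}$ when $X$ is an $E$-module), we have
\[
  \pi_M(A) = [M, A]_{\Sp} \cong [E \otimes M, A]_{\Mod_E},
\]
where $A$ is viewed in $\Mod_E$ via the forgetful functor $\Alg_\oO(\Mod_E) \to \Mod_E$. Second, the free-forgetful adjunction $\Free_\oO \dashv \ev$ from \S\ref{sec:monads} (applied inside $\Mod_E$) gives
\[
  [E \otimes M, A]_{\Mod_E} \cong [\Free_\oO^{\Mod_E}(E \otimes M), A]_{\Alg_\oO(\Mod_E)}.
\]

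The remaining step is to identify $\Free_\oO^{\Mod_E}(E \otimes M)$ with $E \otimes \Free_\oO^{\Sp}(M)$ as $\oO$-algebras in $\Mod_E$. This is a direct consequence of the fact, invoked in the example about extension of scalars for $E_\infty$ ring spectra, that $E \otimes -\co \Sp \to \Mod_E$ is a strong symmetric monoidal left adjoint and so preserves extended powers: using $(E \otimes X) \otimes_E (E \otimes Y) \cong E \otimes (X \otimes Y)$ together with preservation of enriched colimits, one gets
\[
  \Sym^k_\oO(E \otimes M) = \oO(k) \otimes_{\Sigma_k} (E \otimes M)^{\otimes_E k} \cong E \otimes \bigl(\oO(k) \otimes_{\Sigma_k} M^{\otimes k}\bigr) = E \otimes \Sym^k_\oO(M),
\]
and taking the coproduct over $k$ then gives $\Free_\oO^{\Mod_E}(E \otimes M) \cong E \otimes \Free_\oO^{\Sp}(M)$ compatibly with the $\oO$-algebra structures. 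Chaining the three isomorphisms produces the claimed natural bijection, and naturality in $A$ is built in because each step is natural.

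The main subtlety, rather than any serious obstacle, is the passage to the homotopy category: the displayed formula must be interpreted with $\Free_\oO$ representing the \emph{derived} free-algebra functor, and the adjunctions above must be derived adjunctions. This is handled by assuming $M$ is replaced by a cofibrant object (so that $\Free_\oO(M)$ in the point-set sense computes the derived construction), or equivalently by interpreting everything $\infty$-categorically; once this is in place, the symmetric-monoidal-left-adjoint argument above goes through on the nose and the three adjunctions descend to the homotopy categories. No new identities beyond those recorded in \S\ref{sec:monads} and the strong symmetric monoidality of extension of scalars are required.
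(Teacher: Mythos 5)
Your proof is correct, but it factors the adjunction the other way around from the paper. The paper expresses the forgetful functor $\Alg_\oO(\Mod_E) \to \Sp$ as the composite $\Alg_\oO(\Mod_E) \to \Alg_\oO(\Sp) \to \Sp$, so its composite left adjoint is literally $M \mapsto E \otimes \Free_\oO(M)$ and the representability follows in two lines with no further identification needed. You instead factor through $\Mod_E$, using $[M,A]_{\Sp} \cong [E \otimes M, A]_{\Mod_E}$ and then the free--forgetful adjunction internal to $\Mod_E$; this forces the extra step of identifying $\Free_\oO^{\Mod_E}(E \otimes M)$ with $E \otimes \Free_\oO^{\Sp}(M)$ as $\oO$-algebras, and your argument for that step (strong symmetric monoidality of $E \otimes -$ plus preservation of enriched colimits, applied levelwise to the extended powers $\Sym^k_\oO$) is the right one. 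What your route buys is precisely the content of the paper's remark immediately following the proposition: representability of $\pi_L^E$ for an arbitrary $E$-module $L$, with the stated case recovered by setting $L = E \otimes M$ together with your base-change formula. The paper's route is shorter and lands directly on the representing object in the stated form; yours is slightly more general and makes the $\Mod_E$-internal free algebra explicit. Both treatments handle the passage to homotopy categories at the same informal level, so your closing caveat about derived functors and cofibrancy is appropriate but not a point of divergence.
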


\begin{proof}
  The forgetful functor $\Alg_\oO(\Mod_E) \to \Sp$ can be expressed as
  a composite $\Alg_\oO(\Mod_E) \to \Alg_\oO(\Sp) \to \Sp$, and as
  such has a composite left adjoint $M \mapsto \Free_\oO(M) \mapsto E
  \otimes \Free_\oO (M)$; this adjunction passes to the homotopy
  category. Therefore, applying this adjunction we find
  \begin{align*}
    \pi_{M}(A) &\cong [\Free_\oO(M), A]_{\Alg_\oO(\Sp)}\\
    &\cong [E \otimes \Free_\oO(M), A]_{\Alg_\oO(\Mod_E)}
  \end{align*}
  as desired.
\end{proof}

\begin{remark}
  It is possible to index more generally. Given an $E$-module $L$, we
  also have functors $\pi_L^E(-) = [L,-]_{\Mod_E}$; the free
  $\oO$-algebra $\Free_\oO(L)$ in the category of $E$-modules is then
  a representing object for $\pi_L^E$ in $\Alg_{\oO}(\Mod_E)$. We
  recover the above case by setting $L = E \otimes M$.
\end{remark}

The \index{Yoneda lemma}Yoneda lemma now gives the following.

\begin{corollary}
  Let $F$ be a functor from $h\Alg_{\oO}(\Mod_E)$ to the category of
  sets. Natural transformations of functors $\pi_M \to F$ are in
  bijective correspondence with $F(E \otimes \Free_\oO(M)).$

  In particular, there is an isomorphism 
  \[
    \Op^E_\oO(M_1,\dots,M_d; N) \cong E_N(\Free_\oO(\oplus M_i))
  \]
  from the group of natural transformations $\prod \pi_{M_i} \to
  \pi_N$ to the $E$-homology group of the free algebra.
\end{corollary}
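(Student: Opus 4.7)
The plan is to deduce everything from the representability statement in the preceding proposition by a direct application of the Yoneda lemma, plus the observation that products of representables are represented by coproducts and that the functor $M \mapsto E \otimes \Free_\oO(M)$ preserves those coproducts.

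For the first assertion, the preceding proposition supplies a natural isomorphism $\pi_M(A) \cong [E \otimes \Free_\oO(M), A]_{\Alg_\oO(\Mod_E)}$, so $\pi_M$ is corepresentable in $h\Alg_\oO(\Mod_E)$ by $E \otimes \Free_\oO(M)$. The Yoneda lemma then immediately yields a bijection between natural transformations $\pi_M \to F$ and elements of $F(E \otimes \Free_\oO(M))$, for any functor $F \co h\Alg_\oO(\Mod_E) \to \Sets$.

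For the multivariable statement, I first rewrite the product $\prod_i \pi_{M_i}$ as a single representable. Using the representability of each factor,
\[
  \prod_i \pi_{M_i}(A) \cong \prod_i [E \otimes \Free_\oO(M_i), A]_{\Alg_\oO(\Mod_E)}
  \cong \left[\coprod_i E \otimes \Free_\oO(M_i),\, A\right]_{\Alg_\oO(\Mod_E)},
\]
where the coproduct is taken in $\Alg_\oO(\Mod_E)$. The functor $M \mapsto E \otimes \Free_\oO(M)$ from $\Sp$ to $\Alg_\oO(\Mod_E)$ is a composite of left adjoints (as exhibited in the preceding proof), hence preserves coproducts; consequently $\coprod_i E \otimes \Free_\oO(M_i) \cong E \otimes \Free_\oO(\oplus_i M_i)$. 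So $\prod_i \pi_{M_i}$ is corepresented by $E \otimes \Free_\oO(\oplus_i M_i)$.

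Applying the Yoneda lemma again, with $F = \pi_N$, identifies $\Op^E_\oO(M_1,\dots,M_d; N)$ with $\pi_N(E \otimes \Free_\oO(\oplus_i M_i))$, which by definition is $E_N(\Free_\oO(\oplus_i M_i))$. The only point requiring mild care is ensuring that the left-adjoint argument legitimately descends to the homotopy category (so that the representability and coproduct-preservation statements hold there); this is essentially built into the setup of the preceding proposition, and is the spot I expect a reader might want the most justification.
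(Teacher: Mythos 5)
Your proof is correct and is essentially the paper's own argument: the paper gives no separate proof beyond the remark that the Yoneda lemma, applied to the representability proposition immediately preceding, yields the corollary, which is exactly your first paragraph. The only place you diverge is the multivariable step. You corepresent each $\pi_{M_i}$ separately, form the coproduct $\coprod_i E \otimes \Free_\oO(M_i)$ in $h\Alg_\oO(\Mod_E)$, and invoke preservation of coproducts by the derived left adjoint; this does work (homotopy coproducts of algebras are honest coproducts in the homotopy category, and the derived left adjoint preserves them), but it brings in more machinery than necessary, as you yourself flag. The quicker route, and presumably the intended one, is to observe that $\oplus M_i$ is already the coproduct in spectra, so $\prod_i \pi_{M_i}(A) = \prod_i [M_i,A]_{\Sp} \cong [\oplus M_i, A]_{\Sp} = \pi_{\oplus M_i}(A)$ as functors on $h\Alg_\oO(\Mod_E)$; the unary statement then applies verbatim with $M = \oplus M_i$ and $F = \pi_N$, giving $\pi_N(E \otimes \Free_\oO(\oplus M_i))$, which is $E_N(\Free_\oO(\oplus M_i))$ by the definition $E_M(X) = \pi_M(E \otimes X)$. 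This sidesteps entirely the descent-to-the-homotopy-category point you identified as the delicate spot.
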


The canonical decomposition of \S\ref{sec:monads} for the monad
$\Free_\oO$ into extended powers gives us a canonical decomposition
of operations.

\begin{definition}
  For $k \geq 0$, the group of \emph{operations of
    \index{weight!of an operation}weight $k$} is the subgroup
  \[
    \Op^E_\oO(M_1,\dots,M_d; N)^{\langle k \rangle} =
    E_N(\Sym^k_\oO (\oplus M_i))
  \]
  of $\Op^E_\oO(M_1,\dots,M_d; N) \cong E_N(\Free_\oO(\oplus
  M_i))$.

  A \emph{\index{power operations}power operation of weight $k$} is a
  unary operation of weight $k$: an element of the subgroup
  \[
    \Op^E_\oO(M,N)^{\langle k\rangle} \cong E_N(\Sym^k_\oO(M))
  \]
  of $\Op^E_\oO(M,N)$.
\end{definition}

\begin{remark}
  Composition multiplies weight. Furthermore, if the object $N$ is dualizable,
  the group of all operations is a direct sum: every operation
  decomposes canonically as a sum of operations of varying
  weights.
\end{remark}

\subsection{Structure on operations}
\label{sec:operators}

Even when restricted to ordinary homotopy groups, these operations
between the homotopy groups of $\oO$-algebras in $\Mod_E$ form a
rather rich algebraic structure \index{multi-sorted theory}\cite{bergner-multisorted}, whose
characteristics should be discussed; we learned most of this from Rezk
\cite{rezk-wilkerson, rezk-power-operations}. Recall
\[
  \Op(m_1,\dots,m_d;n) = \Op^E_\oO (m_1,\dots,m_d; n) \cong
  \pi_n(E \otimes \Free_\oO(\oplus S^{m_i})).
\]
Here are some characteristics of this algebraic theory.
\begin{enumerate}
\item We think of the elements in these groups as \index{operations}operators, in the
  sense that they can \emph{act}. Given
  $\alpha \in \Op(m_1,\dots,m_d;n)$, an $\oO$-algebra $R$ in
  $\Mod_E$ and $x_i \in \pi_{m_i} R$, we can apply $\alpha$ to get a
  natural element
  \[
    \alpha \propto (x_1,\dots,x_d) \in \pi_n R.
  \]
  This action is associative with respect to composition, but only
  distributes over addition on the left.
\item For each $1 \leq k \leq d$, there is a \emph{\index{fundamental
      generator}fundamental generator}
  $\iota_k \in \Op(m_1,\dots,m_d;m_k)$ that acts by projecting:
  \[
    \iota_k \propto (x_1,\dots,x_d) = x_k.
  \]
\item These operators can \emph{\index{composition!of operators}compose}: given
  $\alpha \in \Op(m_1,\dots,m_d;n)$ and
  $\beta_i \in \Op(\ell_1,\dots,\ell_c; m_i)$, there
  is a composite operator
  \[
    \alpha \propto (\beta_1,\dots,\beta_d) \in
    \Op(\ell_1,\dots,\ell_c;n).
  \]
  Composition is unital. It is also associative, both with itself and
  with acting on elements. Again, it only distributes over addition on
  the left.
\item Composition respects weight: if $\alpha$ is in weight $a$ and
  $\beta_i$ are in weights $b_i$, then $\alpha \propto (\beta_i)$ is
  in weight $a \cdot (\sum b_i)$.
\end{enumerate}

\begin{example}
  Take $E = HR$ for a commutative ring $R$ and let $\oO$ to be the
  \index{associative operad}associative operad. Then the graded group
  \[
    Op(m_1,\dots,m_d;*) = \oplus_n \Op(m_1,\dots,m_d;n) \cong
    H_*(\Free_{\oO}(\oplus S^{m_i};R))
  \]
  is the free associative graded $R$-algebra on the fundamental
  generators $\iota_1 \dots \iota_d$ with $\iota_i$ in degree $m_i$,
  and the composition operations are \emph{substitution}. For example,
  the element $\iota_1 + \iota_2 \in \Op(n,n;n)$ acts by the
  binary addition operation in degree $n$; the elements
  $\iota_1 \iota_2$ and $\iota_2 \iota_1$ in
  $\Op(n_1,n_2;n_1 + n_2)$ represent binary multiplication in
  either order; the element $(\iota_1)^2 \in \Op(n;2n)$ represents
  the squaring operation; for $r \in R$ the element
  $r \iota_1 \in \Op(n;n)$ represents scalar multiplication by $r$;
  combinations of these operations are represented by identities such
  as
  \[
    \iota_1^2 \propto (\iota_1 + \iota_2) = \iota_1^2 + \iota_1
    \iota_2 + \iota_2 \iota_1 + \iota_2^2.
  \]
  In this structure, each monomial has constant weight equal to its
  degree.
\end{example}

\begin{example}
  Take $\oO$ to be an $E_n$-operad. Then, for any $p$ and $q$, the
  \index{Browder bracket}Browder bracket is a natural transformation
  $\pi_p \times \pi_q \to \pi_{p+(n-1)+q}$, and it is realized by an
  element $[\iota_1, \iota_2]$ in $\Op(p,q;p+(n-1)+q)$ of weight
  two. Relations between the product and the Browder bracket are
  expressed universally by relations between compositions: for
  example, antisymmetry is expressed by an identity
  \[
    [\iota_1, \iota_2] = -(-1)^{(p+n-1)(q+n-1)}[\iota_2,\iota_1].
  \]
\end{example}

\begin{remark}
  Inside the collection of all unary operations, there is a subgroup
  of \emph{\index{additive operations}additive} operations: those
  operations $f$ that satisfy
  \[
    f \propto (\iota_1 + \iota_2) = f \propto \iota_1 + f \propto
    \iota_2.
  \]
  Composition of such operations is bilinear, and so the collection of
  objects and additive operations form a category enriched in abelian
  groups. In some cases, the additive operations can be used to
  determine the general structure \cite{rezk-wilkerson}.
\end{remark}

\subsection{Power operations}
\label{sec:power-operations}

We will begin to narrow our study of power operations and focus on
unary operations, of fixed weight, between integer gradings.

\begin{definition}
  \label{def:poweroperations}
  Fix an operad $\oO$ and a commutative ring spectrum $E$. The group
  of \emph{\index{power operations}power operations of weight $k$ on degree $m$} for
  $\oO$-algebras in $\Mod_E$ is the graded abelian group
  \[
    \index{$\Pow$}\Pow^E_\oO(m,k) = \pi_* (F(S^m, E \otimes \Sym^k_\oO(S^m)))
    \cong \bigoplus_{r \in \mb Z} \Op^E_{\oO}(m,m+r)^{\langle k\rangle}.
  \]
  If $\oO$ or $E$ are understood, we drop them from the notation.
\end{definition}

An element of $\Pow(m,k)$ in grading $r$ represents a weight-$k$ natural
transformation $\pi_m \to \pi_{m+r}$ on the homotopy category of
$\oO$-algebras in $\Mod_E$, and induces a natural transformation
$E_m \to E_{m+r}$ on the homotopy category of $\oO$-algebras. (While
we index these group by integers, they depend on a choice of
representing object and in particular on an orientation of $S^m$;
making implicit identifications will result in \index{signs}sign issues.)

\begin{remark}
  These operations, and the relations between them, are still
  possessed by homotopy $\oO$-algebras in the sense of
  \S\ref{sec:monads}.
\end{remark}

\begin{remark}
  \label{rmk:thom}
  Suppose that $\Sigma_k$ acts freely and properly discontinuously on
  $\oO(k)$. Let $V \subset \mb R^k$ be the subspace of elements which
  sum to $0$, with associated vector bundle
  $\overline \rho \to B\Sigma_k$ of dimension $k-1$. For any $m$ there
  is an associated virtual bundle $\mb R^m \otimes \overline \rho$. If
  we define
  \[
    P(k) = \oO(k) / \Sigma_k,
  \]
  then there is a virtual bundle $m\overline \rho$ on
  $P(k)$. The \index{Thom spectrum}Thom spectrum $P(k)^{m\overline\rho}$ of this virtual
  bundle is canonically equivalent to the spectrum $\Sigma^{-m}
  \Sigma^\infty_+ \oO(k) \otimes_{\Sigma_k} (S^m)^{\otimes k}$ that
  appears in the definition of $\Pow(m, k)$.

  This allows us to give a more concise expression
  \[
    \Pow(m,k) = E_*(P(k)^{m\overline\rho}),
  \]
  which is particularly useful in cases where we can apply a \index{Thom isomorphism}Thom
  isomorphism for $E$-homology.
\end{remark}

\begin{example}
  Consider the case of operations of weight $2$ for
  $E_n$-algebras. The space $P(2) = \mathcal{C}_n(2) / \Sigma_2$ is
  homotopy equivalent to the \index{real projective space}real projective space $\mb{RP}^{n-1}$,
  the line bundle $\overline \rho = \sigma$ is associated to the sign
  representation of $\Sigma_2$, and the \index{Thom spectrum}Thom spectrum
  $(\mb{RP}^{n-1})^{m\sigma}$ is commonly known as the
  \emph{\index{stunted projective space}stunted
    projective space} $\mb{RP}^{m+n-1}_m$ which has a cell
  decomposition with one cell in each dimension between $m$ and
  $m+n-1$. (When $m \geq 0$ this is literally the suspension spectrum of
  $\mb{RP}^{m+n-1} / \mb{RP}^{m-1}$.) Therefore, the operations of
  weight $2$ on degree $m$ are parametrized by the $E$-homology group
  \[
    \Op^E_m(2) = E_*(\mb{RP}^{m+n-1}_m).
  \]
\end{example}

\begin{example}
  When $E = H\mb F_2$, we find $H_*(\mb{RP}^{m+n-1}_m)$ is $\mb F_2$
  in degrees $m$ through $(m+n-1)$, and so we obtain unique
  \emph{\index{Dyer--Lashof operations!construction}Dyer--Lashof operations} $Q^r$ for $m \leq r \leq m+n-1$ that
  send elements in $\pi_m $ to elements in $\pi_{m+r}$.
\end{example}

\begin{example}
  \label{ex:cuponesquare}
  Consider the \index{cup-1 operad}cup-1 operad $\mathcal{Q}_1$ defined in
  Example~\ref{ex:cuponeoperad}. Then the weight-2 operations on the
  $E$-homology of $\mathcal{Q}_1$-algebras are parametrized by
  $E_* (\mb{RP}^{m+1}_m)$. This stunted projective space is the \index{Thom spectrum}Thom
  spectrum of $m$ times the M\"obius line bundle over $S^1$.

  For example, we can take $E$ to be the sphere spectrum. If $m = 2k$
  there is a splitting
  \[
    \mb{RP}_{2k}^{2k+1} \simeq S^{2k} \oplus S^{2k+1}.
  \]
  Chosen generators in $\pi_{2k}(S^{2k} \oplus S^{2k+1})$ and
  $\pi_{2k+1}(S^{2k} \oplus S^{2k+1})$ give operations that increase
  degree by $2k$ and $2k+1$, respectively. A choice of splitting
  $S^{2k+1} \to \mb{RP}_{2k}^{2k+1}$ determines an operation
  $\Sq_1\co \pi_{2k}(-) \to \pi_{4k+1}(-)$ called the \index{cup-1
    square}\emph{cup-1 square}. It satisfies $2\Sq_1(a) = [a,a]$.
  
  In the case that we have an $E_\infty$ ring spectrum, this has been studied in \cite[\S
  V]{bmms-hinfty} and \cite{baues-muro-cupone}, and can be chosen in
  such a way that it satisfies the following addition and
  multiplication identities on even-degree homotopy elements:
  \[
    2 \Sq_1(a) = 0
  \]
  \[
    \Sq_1(a+b)
    = \Sq_1(a) + \Sq_1(b) + (\tfrac{|a|}{2}+1)ab\eta
  \]
  \[
    \Sq_1(ab)
    = a^2 \Sq_1(b) + \Sq_1(a) b^2 + \tfrac{|ab|}{4} a^2 b^2 \eta.
  \]
  For example, $\Sq_1(n) = \binom{n}{2} \eta$ for $n \in \mb Z$. In
  the absence of higher commutativity, these identities should have
  correction terms involving the \index{Browder bracket}Browder bracket.
\end{example}

\subsection{Stability}
\label{sec:stability}
\index{stability|(}

In this section we will consider compatibility relations between
operations on different homotopy degrees.

Recall from \S\ref{sec:monads} that the monad $\Free_\oO$ decomposed
into the homogeneous functors defined by
\[
  \Sym^k_\oO(X) = \Sigma^\infty_+ \oO(k) \otimes_{\Sigma_k}
  X^{\otimes k}.
\]

In particular, these functors are \emph{continuous}: they induce
functions 
\[
  \Map(X,Y) \to \Map(\Sym^k_\oO(X), \Sym^k_\oO(Y))
\]
between mapping spaces, and for $k > 0$ they have the property that
they are \emph{pointed}: $\Sym^k_\oO(*) = *$ and hence the
functor $\Sym^k_\oO$ induces continuous maps of
\emph{pointed} mapping spaces.

\begin{definition}
  For any spectrum $M$, any pointed space $Z$, and any $k > 0$, the
  \emph{\index{assembly map}assembly map}
  \[
    \Sym^k_\oO(M) \otimes \Sigma^\infty Z \to
    \Sym^k_\oO(M \otimes \Sigma^\infty Z)
  \]
  is adjoint to the composite map of pointed spaces
  \begin{align*}
    Z &\to \Map_\Sp(S^0, \Sigma^\infty Z)\\
      &\to \Map_\Sp(M, M \otimes \Sigma^\infty Z)\\
      &\to \Map_\Sp(\Sym^k_\oO(M),
        \Sym^k_\oO(M \otimes \Sigma^\infty Z)).
  \end{align*}
  The \emph{\index{suspension!of operations}suspension map}
  \[
    \sigma_n\co \Pow(m,k) \to \Pow(m+n,k)
  \]
  is induced by the composite map of function spectra
  \begin{align*}
    F(S^m,E \otimes \Sym^k_\oO(S^m))
    &\to F(S^m \otimes S^n, E \otimes \Sym^k_\oO(S^m) \otimes S^n)\\
    &\to F(S^m \otimes S^n, E \otimes \Sym^k_\oO (S^m \otimes S^n)).
  \end{align*}
\end{definition}

\begin{remark}
  The operation $\sigma = \sigma_1$ has a concrete meaning: it is
  designed for \emph{compatibility with the \index{Mayer--Vietoris
      sequence}Mayer--Vietoris sequence}. To illustrate this, first
  recall that for a homotopy commutative diagram
  \[
    \xymatrix{
      A \ar[r] \ar[d] & B \ar[d] \\
      C \ar[r] & D
    }
  \]
  of spectra, we have natural maps $A \to P \leftarrow \Sigma^{-1} D$
  where $P$ is the homotopy pullback.

  Now suppose that we are given a diagram of $\oO$-algebras as above
  which is a homotopy pullback, inducing a boundary map
  $\partial\co \Sigma^{-1} D \to P \simeq A$. Given maps
  $\theta\co N \to E \otimes \Sym^k_\oO(M)$ and
  $\alpha\co \Sigma M \to D$, we can map in a trivial homotopy
  pullback diagram to the above, then apply action maps and naturality
  of the connecting homomorphisms. We get a commuting diagram:
  \[
    \xymatrix{
      N \ar[r]^-\theta \ar[d]_\sim &
      E \otimes \Sym^k_\oO M
      \ar[r]^-{\partial \alpha} \ar[d] &
      A \ar[d]^\sim \\
      \Sigma^{-1} \Sigma N \ar[r] &
      P' \ar[r] &
      P \\
      \Sigma^{-1} \Sigma N \ar[u]^\sim \ar[r]_-{\sigma \theta} &
      \Sigma^{-1} E \otimes \Sym^k_\oO(\Sigma M) \ar[u]
      \ar[r]_-{\Sigma^{-1}\alpha} &
      \Sigma^{-1} D \ar[u]_\partial
    }
  \]
  Therefore, for an operation $\theta\co [M,-] \to [N,-]$ for
  $\oO$-algebras in $\Mod_E$, we find that
  \[
    \partial \circ \sigma\theta \sim \theta \circ \partial.
  \]
  This description makes implicit choices about the
  orientation of the circle that appears in the operation $\Omega$
  when taking homotopy pullbacks, and this can result in \index{signs}sign
  headaches.
\end{remark}

\begin{proposition}
  For $k, r > 0$, the suspension $\sigma_r\co \Pow(m,k) \to \Pow(m+r,k)$
  is the map
  \[
    E_*(P(k)^{m \overline \rho}) \to
    E_*(P(k)^{(m+r) \overline \rho})
  \]
  on $E$-homology induced by the inclusion of virtual bundles $m
  \overline \rho \to m \overline \rho \oplus r \overline \rho$.
\end{proposition}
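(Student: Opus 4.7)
The plan is to identify the assembly map
\[
  \Sym^k_\oO(S^m) \wedge S^r \to \Sym^k_\oO(S^{m+r})
\]
appearing in the definition of $\sigma_r$ with a Thom-spectrum map, and then match it against the Thom-spectrum presentation of $\Pow$ from Remark~\ref{rmk:thom}. First I would decompose the $\Sigma_k$-representation $\mb R^{mk} = \mb R^m \otimes \mb R^k$ into its diagonal summand $\mb R^m$ and its complement $\mb R^m \otimes V$, where $V \subset \mb R^k$ is the reduced standard representation corresponding to $\overline\rho$. Taking one-point compactifications yields a $\Sigma_k$-equivariant identification $(S^m)^{\wedge k} \simeq S^m \wedge S^{m\overline\rho}$, and hence $\Sym^k_\oO(S^m) \simeq \Sigma^m P(k)^{m\overline\rho}$, recovering the equivalence of Remark~\ref{rmk:thom}.

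The key step is to analyze the $k$-fold reduced diagonal $\Delta\co S^r \to (S^r)^{\wedge k}$ that drives the assembly map. The diagonal subspace of $(\mb R^r)^k$ is precisely the $\Sigma_k$-fixed subspace, and hence is the first summand in the decomposition $\mb R^{rk} = \mb R^r \oplus (\mb R^r \otimes V)$. On one-point compactifications $\Delta$ therefore factors as $\mathrm{id}_{S^r} \wedge \iota$, where $\iota\co S^0 \to S^{r\overline\rho}$ is the $\Sigma_k$-equivariant inclusion at the zero and infinity points of the representation sphere. Smashing with $\mathrm{id}_{S^{mk}}$ and applying $\oO(k)_+ \wedge_{\Sigma_k} -$ identifies the assembly map with
\[
  \Sigma^{m+r} P(k)^{m\overline\rho} \to \Sigma^{m+r} P(k)^{(m+r)\overline\rho},
\]
namely $\Sigma^{m+r}$ of the Thom-spectrum map induced by the bundle inclusion $m\overline\rho \hookrightarrow m\overline\rho \oplus r\overline\rho$ arising from the zero section of $r\overline\rho$.

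Finally, I would unwind the definition of $\sigma_r$ through these identifications. Elements of $\Pow(m,k)$ in degree $n$ correspond, under the shift isomorphism $\pi_{m+n}(E \otimes \Sigma^m P(k)^{m\overline\rho}) \cong E_n(P(k)^{m\overline\rho})$, to $E$-homology classes of $P(k)^{m\overline\rho}$. By construction $\sigma_r$ is induced by smashing with $S^r$ and post-composing with $E \otimes (\text{assembly})$; after the Thom-spectrum identification and the appropriate desuspension, this becomes exactly the map on $E$-homology induced by $P(k)^{m\overline\rho} \to P(k)^{(m+r)\overline\rho}$, as claimed.

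The main obstacle is the bookkeeping: the factors of $\Sigma^m$ and $\Sigma^r$ coming from the trivial summands of the two $\Sigma_k$-representations must cancel correctly against the indexing conventions of $\Pow$, and one must verify that no sign ambiguity arises when rearranging the various smash products. Once the identification $\Delta \simeq \mathrm{id}_{S^r} \wedge \iota$ is in hand, the remaining verification is essentially formal.
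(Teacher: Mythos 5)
Your proposal is correct and follows essentially the same route as the paper: both identify $\Sym^k_\oO(S^m)$ with the Thom spectrum of (a suspension of) $m\overline\rho$ over $P(k)$, observe that the assembly map is driven by the diagonal $\mb R^r \to (\mb R^r)^k$, which is exactly the inclusion of the trivial summand with complement $r\overline\rho$, and then desuspend $(m+r)$ times. The paper phrases this more tersely in terms of the unreduced bundle $m\rho \oplus r \to (m\rho\oplus r)\oplus r\overline\rho$ before desuspending, but the content is identical to your factorization $\Delta \simeq \mathrm{id}_{S^r}\wedge\iota$.
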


\begin{proof}
  The assembly map $\Sym^k_\oO(S^m) \otimes S^n \to
  \Sym^k_\oO(S^{m+n})$ is the map
  \[
    (\Sigma^\infty_+ \oO(k) \otimes_{\Sigma_k} S^{m\rho}) \otimes S^n
    \to (\Sigma^\infty_+ \oO(k) \otimes_{\Sigma_k} S^{(m+n)\rho}),
  \]
  which is the map
  \[
    P(k)^{m\rho} \otimes S^r \to P(k)^{(m+r)\rho}
  \]
  induced by the direct sum inclusion
  $m\rho \oplus r \to (m\rho \oplus r) \oplus r \overline \rho$ of
  virtual bundles. The map $\sigma_r$ is obtained by desuspending both
  sides $(m+r)$ times, which gives the map induced by the
  direct sum inclusion
  $m \overline \rho \to m \overline \rho \oplus r \overline \rho$ of
  virtual bundles.
\end{proof}

\begin{example}
  The Dyer--Lashof operations for $E_n$-algebras are explicitly
  \emph{unstable}. For example, in weight two the $n$-fold suspension
  maps $\mb{RP}^{m+n-1}_m \to \mb{RP}^{(m+n) + n-1}_{m+n}$ are
  trivial, and so the map $\Op^E_m(2) \to \Op^E_{m+n}(2)$ is
  trivial. This recovers the well-known fact that all Dyer--Lashof
  operations for $E_n$-algebras map to zero under $n$-fold suspension.

  By contrast, the Dyer--Lashof operations for $E_\infty$-algebras are
  \emph{stable}: the maps $H_* \mb{RP}^\infty_m \to H_*
  \mb{RP}^\infty_{m+1}$ are surjections, and so the quadratic
  operations all lift to elements in the homotopy of
  \[
    \lim_m (H \otimes \mb{RP}^\infty_m).
  \]
  By \cite[16.1]{greenlees-may-tate}, this is the desuspended \index{Tate spectrum}Tate
  spectrum $(\Sigma^{-1} H)^{t\Sigma_2}$.
\end{example}

\begin{remark}
  More generally, the fully stable operations of prime weight $p$ on
  the homotopy of $E_\infty$ $E$-algebras are detected by the
  $p$-localized \index{Tate spectrum}Tate spectrum
  \[
    (\Sigma^{-1} E_{(p)})^{t\Sigma_p}.
  \]
  See \cite[II.5.3]{bmms-hinfty} and \cite{stablepoweroperations}.
\end{remark}
\index{stability|)}

\subsection{Pro-representability}
\label{sec:pro-representability}
\index{representability!by pro-objects|(}

Suppose that $E = \colim E_\alpha$ is an expression of $E$ as a
filtered colimit of finite spectra. Then there is an identification
\[
  E_m A = \colim_\alpha [S^m, E_\alpha \otimes A] =
  \colim_\alpha [S^m \otimes DE_\alpha, A],
\]
where $D$ is the \index{Spanier--Whitehead dual}Spanier--Whitehead dual. We cannot move the
colimit inside, but we can view $\{S^m \otimes DE_\alpha\}$ as a
\emph{\index{pro-object}pro-object} in the category of spectra. This
makes the functor $E_m$ representable by embedding the category of
spectra into the category of pro-spectra.

For algebras over an operad $\oO$, we can go even further and find that
\[
  E_m(A) = [\{\Free_\oO(S^m \otimes DE_\alpha)\},A]_{\pro \oO}
\]
is now a representable functor in the homotopy category of
pro-$\oO$-algebras, and in this category we can determine all the natural
operations $E_m \to E_n$:
\begin{align*}
  Nat_{\pro\oO}(E_m(-), E_n(-))
  &= [\{\Free_\oO(S^n \otimes DE_\alpha)\},
  \{\Free_\oO(S^m \otimes DE_\beta)\}]_{\pro\oO}\\
  &= \pi_0 \lim_\beta \colim_\alpha
    \Map_\oO(\Free_\oO(S^n \otimes DE_\alpha),
    \Free_\oO(S^m \otimes DE_\beta))\\
  &= \pi_0 \lim_\beta \colim_\alpha \Map_{\Sp} (S^n \otimes DE_\alpha,
    \Free_\oO(S^m \otimes DE_\beta))\\
  &= \pi_0 \lim_\beta \Map_{\Sp} (S^n,
    E \otimes \Free_\oO(S^m \otimes DE_\beta))\\
  &= \pi_n \lim_\beta E \otimes (\Free_\oO(S^m \otimes DE_\beta)).
\end{align*}

The algebra of natural transformations has natural maps in from the
group
\[
  [S^m \otimes E, S^n \otimes E]
\]
of cohomology operations (and these maps are isomorphisms if $\oO$ is
trivial), and it has a natural map to the limit
\[
  \lim_\beta E_n(\Free_\oO(S^n \otimes DE_\beta)).
\]
This map to the limit is an isomorphism if no higher derived functors
intrude. We can think of this as the algebra of \emph{continuous}
\index{continuous operations}operations on $E$-homology.

\index{operations|)}
\index{representability!by pro-objects|)}

\section{Classical operations}
\label{sec:classical}

\subsection{$E_n$ Dyer--Lashof operations at $p=2$}

We will now specialize to the case of ordinary mod-$2$ homology. When
we do so, we have \index{Thom isomorphism}Thom isomorphisms for many bundles and we have
explicit computations of the homology of \index{configuration space}configuration spaces due to
\index{Cohen}Cohen \cite{cohen-lada-may-homology}. Similar results with more
complicated identities hold at odd primes.

\begin{proposition}
  Let $H = H\mb F_2$ be the mod-$2$ \index{Eilenberg--Mac Lane spectrum}Eilenberg--Mac Lane spectrum. Then
  the group $\Op^H_m(2)$ of weight-2 operations for $E_n$-algebras has
  exactly one nonzero operation in each degree between $m$ and
  $m+n-1$, and no others.
\end{proposition}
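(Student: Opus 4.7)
The plan is to invoke the representability formula from Remark~\ref{rmk:thom}, namely
\[
  \Op^H_m(2) = H_*(P(2)^{m\overline\rho}),
\]
where $P(2) = \mathcal{E}_n(2)/\Sigma_2$ is homotopy equivalent to $\mb{RP}^{n-1}$ and $\overline\rho$ is the rank $1$ bundle associated to the sign representation (that is, the tautological Möbius line bundle). Thus the problem reduces to computing the mod-$2$ homology of the (virtual) Thom spectrum of $m$ copies of the Möbius bundle over $\mb{RP}^{n-1}$.

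The key input is the mod-$2$ Thom isomorphism. Since $H\mb F_2$ is a $\mb F_2$-module spectrum and every real vector bundle is orientable mod $2$, every real virtual bundle $\xi$ of virtual dimension $d$ over a space $X$ has a canonical Thom class in $H\mb F_2$-cohomology, producing an isomorphism
\[
  H_*(X^\xi; \mb F_2) \xrightarrow{\ \cong\ } H_{*-d}(X_+; \mb F_2).
\]
Applying this with $X = \mb{RP}^{n-1}$ and $\xi = m\overline\rho$ of virtual dimension $m$ yields
\[
  H_*(P(2)^{m\overline\rho}; \mb F_2) \cong H_{*-m}(\mb{RP}^{n-1}_+; \mb F_2).
\]

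The final step is the classical computation: $H_i(\mb{RP}^{n-1}; \mb F_2)$ is a copy of $\mb F_2$ for $0 \leq i \leq n-1$ and vanishes otherwise. Combining with the shift by $m$, the group $\Op^H_m(2)$ is $\mb F_2$ in each integer degree from $m$ up to $m+n-1$ and zero elsewhere, proving the claim.

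The main subtlety, rather than an obstacle, is keeping track of the virtual nature of the bundle: for $m<0$ the Thom spectrum $P(2)^{m\overline\rho}$ is not a literal suspension spectrum of a stunted projective space, so the argument must be phrased in terms of the Thom spectrum construction and the general mod-$2$ Thom isomorphism rather than relying on cellular homology of $\mb{RP}^{m+n-1}/\mb{RP}^{m-1}$. The essential use of $p=2$ is that the bundle $\overline\rho$ is non-orientable, so for other coefficients the corresponding computation would be genuinely different.
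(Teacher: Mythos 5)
Your proof is correct and follows essentially the same route as the paper: both reduce via Remark~\ref{rmk:thom} to computing $H_*(P(2)^{m\overline\rho})$ with $P(2)\simeq \mb{RP}^{n-1}$, the paper then citing the cell structure of the stunted projective space $\mb{RP}^{m+n-1}_m$ while you instead apply the mod-$2$ Thom isomorphism to land on $H_{*-m}(\mb{RP}^{n-1};\mb F_2)$ --- a cosmetic difference, and your phrasing has the small virtue of handling $m<0$ without appealing to the quotient-space model.
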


\begin{proof}
  By Remark~\ref{rmk:thom}, this is a calculation
  $H_*(\mb{RP}^{n+m-1}_m)$ of the mod-$2$ homology of \index{stunted projective space}stunted
  projective spaces.
\end{proof}

\index{Dyer--Lashof operations!properties}
\begin{theorem}[{\cite[III.3.1, III.3.2, III.3.3]{bmms-hinfty}}]
  \label{thm:omnibus-en}
  Let $H = H\mb F_2$ be the mod-$2$ Eilenberg--Mac Lane spectrum. 
  Then $E_n$-algebras in $\Mod_{H}$ have \emph{Dyer--Lashof
    operations}
  \[
    Q_i\co \pi_m \to \pi_{2m+i}
  \]
  for $0 \leq i \leq {n-1}$. These satisfy the following formulas.
  \begin{quote}
    \begin{description}
    \item[\index{additivity}Additivity:] $Q_r(x+y) = Q_r(x) + Q_r(y)$ for $r < n-1$.
    \item[Squaring:] $Q_0 x = x^2$.
    \item[Unit:] $Q_j 1 = 0$ for $j > 0$.
    \item[\index{Cartan formula}Cartan formula:] $Q_r(xy) = \sum_{p+q = r} Q_p(x) Q_q(y)$
      for $r < n-1$.
    \item[\index{Adem relations}Adem relations:]
      $Q_r Q_s(x) = \sum \binom{j-s-1}{2j-r-s} Q_{r + 2s - 2j} Q_j(x)$
      for $r > s$.
    \item[\index{stability}Stability:] $\sigma Q_0 = 0$, and $\sigma Q_r = Q_{r-1}$ for
      $r > 0$.
    \item[Extension:] If an $E_n$-algebra structure extends to an
      $E_{n+1}$-algebra structure, the operations $Q_r$ for
      $E_{n+1}$-algebras coincide with the operations $Q_r$ for
      $E_n$-algebras.
    \end{description}
  \end{quote}
  There is also a bilinear \emph{\index{Browder bracket}Browder bracket}
  \[
    [-,-]\co \pi_r \otimes \pi_s \to \pi_{r+(n-1)+s}
  \]
  satisfying the following formulas.
  \begin{quote}
    \begin{description}
    \item[\index{antisymmetry}Antisymmetry:] $[x,y] = [y,x]$ and $[x,x] = 0$.
    \item[Unit:] $[x,1] = 0$.
    \item[\index{Leibniz rule}Leibniz rule:] $[x,yz] = [x,y]z + y[x,z]$.
    \item[\index{Jacobi identity}Jacobi identity:] $[x,[y,z]] + [y,[z,x]] + [z,[x,y]] = 0$.
    \item[Dyer--Lashof vanishing:] $[x,Q_r y] = 0$ for $r < n-1$.
    \item[Top \index{additivity}additivity:]
      $Q_{n-1}(x+y) = Q_{n-1} x + Q_{n-1} y + [x,y]$.
    \item[Top \index{Cartan formula}Cartan formula:]
      $Q_{n-1}(xy) = \sum_{p+q = n-1} Q_p(x) Q_q(y) + x [x,y] y$.
    \item[\index{adjoint identity}Adjoint identity:] $[x,Q_{n-1}y] = [y,[y,x]]$.
    \item[Extension:] If an $E_n$-algebra structure extends to an
      $E_{n+1}$-algebra structure, the bracket is identically zero.
    \item[$E_1$-bracket:] $[x,y] = xy + yx$ if $n=1$.
    \end{description}
  \end{quote}
\end{theorem}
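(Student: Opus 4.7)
The plan is to build the operations and verify their relations through the representability framework of \S\ref{sec:representability}--\S\ref{sec:stability}, reducing every identity to a calculation in the mod-$2$ homology of configuration spaces and their symmetric quotients. First I would define the $Q_i$ as the unique nonzero classes in the groups $\Op^H_m(2)^{\langle 2 \rangle} \cong H_*(P(2)^{m\overline\rho}) = H_*(\mathbb{RP}^{m+n-1}_m)$ guaranteed by the preceding proposition; the cell structure of the stunted projective space gives exactly one class in each degree from $m$ through $m+n-1$, and these names are pinned down once we fix generators. The Browder bracket is defined as in \S\ref{sec:operations} from the generating class $\lambda \in \pi_{n-1}\mathcal{E}_n(2)$, so its existence is automatic.

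Next I would dispose of the formal identities. The squaring identity $Q_0 x = x^2$ and the unit identity $Q_j(1) = 0$ follow from evaluating on the bottom cell of $\mathbb{RP}^{m+n-1}_m$ and from the pointedness of $\Sym^k_\oO$. Stability is essentially a restatement of the proposition on suspension: the inclusion of virtual bundles $m\overline\rho \hookrightarrow (m+1)\overline\rho$ induces the map $H_*\mathbb{RP}^{m+n-1}_m \to H_*\mathbb{RP}^{m+n}_{m+1}$ that kills the bottom cell and is an isomorphism above, matching $\sigma Q_0 = 0$ and $\sigma Q_r = Q_{r-1}$. Extension across $E_n \hookrightarrow E_{n+1}$ is naturality in the operad: the map $\mathcal{E}_n \to \mathcal{E}_{n+1}$ induces the inclusion $\mathbb{RP}^{m+n-1}_m \hookrightarrow \mathbb{RP}^{m+n}_m$, which preserves cells below the top, and the bracket for $E_{n+1}$ comes from $\lambda \in \pi_n\mathcal{E}_{n+1}(2)$ which is nullhomotopic since $\pi_n S^n = 0$ would have to detect it in the relevant range (more precisely, one checks the class dies).

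Then come the identities expressing interaction with the multiplication. Additivity of $Q_r$ for $r < n-1$ and the Cartan formula both reduce to the universal case: by Yoneda it suffices to check them on fundamental classes in $H_*$ of the free $E_n$-algebras $\Free_{E_n}(S^m \vee S^m)$ and $\Free_{E_n}(S^m \otimes S^{m'})$ respectively, and these homologies are computed by Cohen in \cite{cohen-lada-may-homology}. The top additivity and top Cartan corrections come from the fact that for $r = n-1$ the relevant transfer argument fails by exactly the class $\lambda$, which geometrically is the obstruction class and whose action is the Browder bracket. The antisymmetry, unit, Leibniz, and Jacobi formulas for $[-,-]$ reduce to identities in the graded operad $\pi_* \Sigma^\infty_+\mathcal{E}_n$, namely the relations $\lambda\circ\sigma = (-1)^n\lambda$ and the cyclic sum vanishing already recorded in the excerpt.

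The main obstacle is the \index{Adem relations}Adem relations and the \index{adjoint identity}adjoint identity, since both require analyzing weight-$4$ operations, i.e.\ the structure of $H_*(P(4)^{m\overline\rho})$ and the maps from $P(2) \times P(2)$ and $P(2)\circ P(2)$ into it. The strategy is standard but delicate: reduce to the free algebra $\Free_{E_n}(S^m)$, identify the weight-$4$ summand with a Thom spectrum over $\mathcal{E}_n(4)/\Sigma_4$, compute its $\mathbb{F}_2$-homology via the filtration by partition type, and identify the image of the two iterated composites $Q_r Q_s$ inside. The coefficients $\binom{j-s-1}{2j-r-s}$ fall out of the standard double-coset / transfer calculation relating $\Sigma_2\wr\Sigma_2$ to $\Sigma_4$, exactly as in \cite[III.1]{bmms-hinfty}; the adjoint identity $[x, Q_{n-1}y] = [y,[y,x]]$ is the analogous weight-$3$ Browder identity that encodes a relation between $\lambda\circ(1\otimes\lambda)$ and $\lambda\circ(\lambda\otimes 1)$ in $\pi_{*}\Sigma^\infty_+\mathcal{E}_n(3)$, and this is where the bulk of the configuration-space bookkeeping lives.
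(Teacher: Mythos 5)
First, a point of comparison: the paper does not prove this theorem at all --- it is quoted verbatim from \cite[III.3.1--3.3]{bmms-hinfty} (with \cite{cohen-lada-may-homology} behind it), so there is no in-paper argument to measure you against. Your sketch is essentially a reconstruction of the standard Cohen/Bruner--May--McClure--Steinberger proof, organized through the representability and Thom-spectrum framework of \S\ref{sec:representability}--\S\ref{sec:stability}, which is exactly the framing the paper's surrounding sections are designed to support. In that sense the route is the expected one: define $Q_i$ from the cells of $H_*(\mb{RP}^{m+n-1}_m)$, define the bracket from $\lambda \in \pi_{n-1}\mathcal{E}_n(2)$, get stability and extension from the bundle-inclusion description of suspension and from naturality in the operad, and push everything else into weight-$3$ and weight-$4$ computations over $\mathcal{E}_n(3)$ and $\mathcal{E}_n(4)/\Sigma_4$.

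There are, however, a few genuine soft spots. (1) Your derivation of additivity and the Cartan formula ``by Yoneda, using Cohen's computation of $H_*(\Free_{E_n}(-))$'' risks circularity: the identification of the free algebra homology as the free object on these operations subject to these relations is established together with (indeed, partly by means of) the relations themselves, so you cannot simply quote it as prior input; you must either verify the universal classes vanish by the chain-level/transfer arguments of \cite{cohen-lada-may-homology}, or restructure the reduction so that only the additive computation of $H_*(\mathcal{E}_n(k)\otimes_{\Sigma_k}(S^m)^{\otimes k})$ is used. (2) The extension property for the bracket is justified incorrectly: $\pi_n S^n$ is certainly not zero. The correct point is that the map $\mathcal{E}_n(2)\to\mathcal{E}_{n+1}(2)$ is, up to equivalence, the equatorial inclusion $S^{n-1}\hookrightarrow S^n$, which is nullhomotopic, so $\lambda$ dies and the $E_n$-bracket of an $E_{n+1}$-algebra vanishes. (3) The adjoint identity and the Leibniz-type statements involving $Q_{n-1}$ do not literally live in $\pi_*\Sigma^\infty_+\mathcal{E}_n(3)$: since $Q_{n-1}$ only exists after passing to the $\Sigma_2$-quotient with mod-$2$ coefficients, these identities must be verified in the mod-$2$ homology of the relevant extended powers $\mathcal{E}_n(3)\otimes_{\Sigma_2}(-)$, not in the integral stable homotopy of the configuration space itself; similarly $Q_j 1 = 0$ is cleanest via naturality along the unit map from the initial $E_n$-algebra $H$, whose homotopy is concentrated in degree $0$, rather than ``pointedness of $\Sym^k$.'' None of these is fatal to the strategy, but as written they are the places where the proof would not yet go through.
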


\begin{remark}
  There are two common \index{indexing convention}indexing conventions
  for the Dyer--Lashof operations. This lower-indexing convention is
  designed to emphasize the range where the operations are
  defined, and is especially useful for \index{$E_n$-algebra}$E_n$-algebras. The upper-indexing convention defines
  $Q^s x = Q_{s - |x|} x$ so that $Q^s$ is always a natural
  transformation $\pi_m \to \pi_{s+m}$, with the understanding that
  $Q^s x = 0$ for $s < |x|$.
\end{remark}

\begin{example}
  Suppose that $X$ is an $n$-fold loop space, so that $H[X]$
  is an $E_n$-algebra in left $H$-modules. Then we recover the
  classical Dyer--Lashof operations
  \[
    Q_r\co H_n(X) \to H_{2n+r}(X)
  \]
  in the homology of \index{iterated loop space}iterated loop spaces.
\end{example}

\index{free algebra!homology}
\begin{theorem}[{\cite[IX.2.1]{bmms-hinfty},
    \cite[III.3.1]{cohen-lada-may-homology}}]
  For any spectrum $X$ and any $1 \leq n \leq \infty$,
  $H_*(\Free_{E_n}(X))$ is the free object $\mb Q_{E_n}(H_* X)$ in the
  category of graded $\mb F_2$-algebras with Dyer--Lashof operations
  and Browder bracket satisfying the identities of
  Theorem~\ref{thm:omnibus-en}.
\end{theorem}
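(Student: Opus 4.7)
The plan is to work weight by weight, using the decomposition $\Free_{E_n}(X) = \coprod_{k \geq 0} \Sym^k_{E_n}(X)$, which splits the mod-$2$ homology additively and matches the grading of the target by monomial weight in $\mb Q_{E_n}(H_*X)$.

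First I would construct a canonical comparison map $\mb Q_{E_n}(H_*X) \to H_*\Free_{E_n}(X)$. The unit $X \to \Free_{E_n}(X)$ gives a natural inclusion $H_*X \hookrightarrow H_*\Free_{E_n}(X)$, and by the operation theory of Section~\ref{sec:operations} the target inherits a natural action of the entire collection of operations on $E_n$-algebras in $\Mod_H$. Theorem~\ref{thm:omnibus-en} asserts that these operations obey the listed identities, so the universal property of the free object produces the desired algebra map. It remains to show this map is an isomorphism.

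By the weight splitting it suffices to analyze $H_*\Sym^k_{E_n}(X)$. For $X = S^m$, Remark~\ref{rmk:thom} identifies this spectrum with $P(k)^{m\overline\rho}$, and the mod-$2$ Thom isomorphism reduces the computation to $H_*(P(k)) = H_*(\mathcal{E}_n(k)/\Sigma_k)$. For a wedge of spheres one decomposes $X^{\otimes k}$ by which summands appear in each tensor factor, producing a sum of homologies of partitioned configuration spaces. Cohen's explicit computation \cite{cohen-lada-may-homology} then supplies a basis of $H_*\Sym^k_{E_n}(X)$ indexed by admissible monomials in the $Q_r$'s and Browder brackets applied to the generating sphere classes, which exactly matches the weight-$k$ basis of $\mb Q_{E_n}(H_*X)$. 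For a general spectrum $X$, one reduces to wedges of spheres via a cell filtration and passage to colimits: the functor $\Sym^k_{E_n}$ preserves filtered homotopy colimits of spectra with free homology, and the algebraic free algebra $\mb Q_{E_n}(-)$ is constructed monomial-by-monomial and so also respects filtered colimits.

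The main obstacle is the matching of bases: namely, identifying the admissibility conditions on monomials in Dyer--Lashof operations and Browder brackets with Cohen's explicit basis for $H_*(P(k))$ and the partitioned configuration spaces. The subtleties are the Adem relations, which cut down the apparent dimension from the naive span, and the bracket vanishing $[x, Q_r y] = 0$ for $r < n-1$ together with the Jacobi identity, which force a canonical admissible form for iterated brackets. Once one checks that admissible monomials span (induction on $k$ using the operadic composition maps and the known suspension behavior from Section~\ref{sec:stability}) and that the resulting Poincar\'e series matches that of $H_*(\mathcal{E}_n(k)/\Sigma_k)$ with its twisted coefficients (a direct counting argument), the comparison map must be an isomorphism in each weight, and hence overall.
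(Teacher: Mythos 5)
A preliminary remark: the paper does not actually prove this statement---it is quoted with citations to \cite[IX.2.1]{bmms-hinfty} and \cite[III.3.1]{cohen-lada-may-homology}---so your outline can only be measured against the classical argument in those sources, which is in fact the route you follow: construct the comparison map from the universal property, split by weight, identify $\Sym^k_{E_n}(S^m)$ with the Thom spectrum $P(k)^{m\overline\rho}$ of Remark~\ref{rmk:thom}, and match Cohen's admissible-monomial basis for the (twisted) homology of $\mathcal{E}_n(k)/\Sigma_k$ against the weight-$k$ part of $\mb Q_{E_n}(H_*X)$. Be aware, though, that the hardest content of the theorem---that admissible monomials in the $Q_r$ and iterated Browder brackets span and are independent, i.e.\ Cohen's computation itself---is exactly what you are citing rather than proving, so your proposal is really a reduction to the quoted literature; that is acceptable here, since the paper does the same, but you should say so explicitly.

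There is one step that fails as stated: the reduction from wedges of spheres to a general spectrum $X$ ``via a cell filtration and passage to colimits.'' Filtered colimits only get you from finite spectra to arbitrary ones; a finite spectrum is built from cells by \emph{cofiber} sequences, not splittings, and $\Sym^k_{E_n}$ does not take cofiber sequences to cofiber sequences, nor can you map a wedge of spheres to $X$ by a mod-$2$ homology isomorphism in general (the mod-$2$ Moore spectrum already rules this out, since its top class is not spherical). The standard repair is either the cellular filtration of extended powers as in \cite[I\S2]{bmms-hinfty}, whose associated graded shows that $H_*\Sym^k_{E_n}(X)$ is a functor of $H_*X$ alone, or---more in the spirit of this paper---to work in $\Mod_H$ with $H=H\mb F_2$: one has
\[
  H \otimes \Sym^k_{E_n}(X) \simeq \Sigma^\infty_+\mathcal{E}_n(k)
  \otimes_{\Sigma_k} (H\otimes X)^{\otimes_H k},
\]
and since $H\otimes X$ splits as a wedge of suspensions of $H$ (its homotopy is a free graded $\mb F_2$-module), the Borel construction commutes with this splitting and reduces the computation to the sphere case you already handled. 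With that substitution, and with the basis-matching honestly attributed to Cohen, your outline becomes the standard proof of the cited theorem.
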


\begin{remark}
  This theorem is the analogue of the calculation of the cohomology of
  Eilenberg--Mac Lane spaces as free algebras in a category of
  algebras with \index{Steenrod operations}Steenrod operations. As such, it means that we have a
  \emph{complete} theory of \index{homotopy operations}homotopy operations for $E_n$-algebras
  over $H$.
\end{remark}

\begin{example}
  In the case $n < \infty$ we can give a straightforward description
  of $\mb Q_{E_n} V$ if $V$ has a basis with a single generator
  $e$. In this case, the antisymmetry, unit, and Dyer--Lashof
  vanishing axioms can be used to show that the free algebra has
  trivial Browder bracket, and so the free algebra $\mb Q_{E_n}(V)$ is
  a graded polynomial algebra
  \[
    \mb F_2[Q_J e]
  \]
  as we range over generators $Q_J e = (Q_1)^{j_1} (Q_2)^{j_2} \dots
  (Q_{n-1})^{j_{n-1}} e$.
\end{example}

\subsection{$E_\infty$ Dyer--Lashof operations at $p=2$}

When $n = \infty$, the results of the previous section become
significantly simpler, and it is worth expressing using the upper
indexing for Dyer--Lashof operations.

\index{Dyer--Lashof operations!properties}
\begin{theorem}[{\cite[III.1.1]{bmms-hinfty}}]
  \label{thm:omnibus}
  Let $H = H\mb F_2$ be the mod-$2$ Eilenberg--Mac Lane spectrum. 
  Then $E_\infty$-algebras in $\Mod_{H}$ have \emph{Dyer--Lashof
    operations}
  \[
    Q^r\co \pi_m \to \pi_{m+r}
  \]
  for $r \in \mb Z$. These satisfy the following formulas.
  \begin{quote}
    \begin{description}
    \item[\index{additivity}Additivity:] $Q^r(x+y) = Q^r(x) + Q^r(y)$.
    \item[\index{instability relations}Instability:] $Q^r x = 0$ if $r < |x|$.
    \item[Squaring:] $Q^r x = x^2$ if $r = |x|$.
    \item[Unit:] $Q^r 1 = 0$ for $r \neq 0$.
    \item[\index{Cartan formula}Cartan formula:] $Q^r(xy) = \sum_{p+q = r} Q^p(x) Q^q(y)$.
    \item[\index{Adem relations}Adem relations:]
      $Q^r Q^s = \sum \binom{i-s-1}{2i-r} Q^{s+r-i} Q^i$ for $r > 2s$.
    \item[Stability:] $\sigma Q^r = Q^r$.
    \end{description}
  \end{quote}
\end{theorem}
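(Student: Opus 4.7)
The plan is to deduce each statement from the general representability framework of Section~\ref{sec:representability}, specialized to $E=H\mb F_2$ and $\oO = E_\infty$. By Remark~\ref{rmk:thom}, the weight-$k$ power operations on degree-$m$ classes are given by $\Pow(m,k) = H_*(P(k)^{m\overline\rho})$ where $P(k) = \mathcal{E}_\infty(k)/\Sigma_k \simeq B\Sigma_k$. The crucial mod-$2$ phenomenon is that at $p=2$ the sign representation is trivial mod $2$, so the bundle $m\overline\rho$ over $B\Sigma_2 = \mb{RP}^\infty$ is mod-$2$ orientable. The Thom isomorphism then gives $H_*(P(2)^{m\overline\rho}) \cong H_{*-m}(\mb{RP}^\infty)$, which is $\mb F_2$ in every nonnegative degree starting from $m$. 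I would \emph{define} $Q^r\co \pi_m \to \pi_{m+r}$ as the weight-$2$ operation corresponding to the generator in degree $r-m$ of $H_*(\mb{RP}^\infty)$; instability ($Q^rx = 0$ for $r<|x|$) is then automatic from the connectivity of the representing spectrum.

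Next I would verify the unary identities. Squaring ($Q^{|x|}x = x^2$) follows because the bottom cell of $P(2)^{m\overline\rho}$ represents the unreduced quadratic construction $x\otimes x$, which maps via the $E_\infty$-multiplication to the square. Additivity follows from the standard transfer argument: for weight-$2$ operations the class $(x+y)^{\otimes 2} - x^{\otimes 2} - y^{\otimes 2}$ lifts through the free $\Sigma_2$-space $\Sigma_{+}^\infty E\Sigma_2$, and at $p=2$ the resulting correction term vanishes because $2=0$. The unit formula comes from $Q^r(1) = 0$ for $r\neq 0$ by the multiplicative structure on $1$ together with instability for $r<0$. Stability $\sigma Q^r = Q^r$ follows from the Proposition in Section~\ref{sec:stability}: the suspension $\sigma_1$ is induced by $m\overline\rho \hookrightarrow (m+1)\overline\rho$ on Thom spectra, and the induced map $H_*(\mb{RP}^\infty_m)\to H_*(\mb{RP}^\infty_{m+1})$ preserves the generators in each degree.

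The multiplicative identities require external operations. For the Cartan formula, I would work with the multicategory $\Op^H_\oO$ and consider weight-$2$ operations of two variables: the class $(xy)^{\otimes 2} = (x^{\otimes 2})(y^{\otimes 2})$ corresponds on $B\Sigma_2 \times B\Sigma_2$ to the external product, and the diagonal on $\mb{RP}^\infty$ expressed via the $H_*(\mb{RP}^\infty)$-coalgebra structure gives exactly $\sum Q^p(x)Q^q(y)$ after multiplying.

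The main obstacle is the Adem relations, which govern the composite $Q^rQ^s$ of weight-$4$ operations and are not directly visible in $H_*(B\Sigma_2)$. The strategy is to factor through $B(\Sigma_2\wr\Sigma_2) \to B\Sigma_4$: the two nested composites $Q^r(Q^s(-))$ and the transposed composite correspond to two embeddings of $\Sigma_2\wr\Sigma_2$ into $\Sigma_4$ that are conjugate. Computing $H_*(B(\Sigma_2\wr\Sigma_2)^{m\overline\rho})$ and identifying both composites as images of the same weight-$4$ operation produces a linear relation among the $Q^rQ^s$; the binomial coefficients $\binom{i-s-1}{2i-r}$ emerge from the Nishida-style calculation of the transfer/restriction map between the two wreath product embeddings, as in \cite[III.1.1]{bmms-hinfty}. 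This is the step where one cannot avoid an explicit computation in the mod-$2$ homology of symmetric groups, and so I would ultimately appeal to the Cohen--Lada--May computation rather than redo it; the representability framework only guarantees that such a relation exists and is universal. Finally, completeness of the list of identities is not claimed in the statement, so we need only check that the axioms listed are consequences of the above constructions.
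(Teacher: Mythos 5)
The paper does not actually prove this theorem---it is quoted directly from \cite[III.1.1]{bmms-hinfty}---and your sketch follows exactly the classical extended-power route of that source, instantiated in the paper's own framework (weight-$2$ operations as $\Pow(m,2)=H_*(\mb{RP}^\infty_m)$ via Remark~\ref{rmk:thom}, suspension via the stability proposition), with the Adem-relation computation delegated back to the same reference that the paper itself cites. So your proposal is correct in approach and essentially coincides with the paper's treatment; the only compressed spots (the unit formula for $r>0$, cleanest by naturality along the unit $H\to A$ since $\pi_{>0}H=0$, and the additivity correction, which is the cross-term $xy+yx=2xy=0$ affecting only the squaring operation) are repaired by the standard arguments in that reference.
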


\begin{example}
  For any space $X$, $H^X$ is an $E_\infty$-algebra in the category of
  left $H$-modules, and hence it has Dyer--Lashof operations
  \[
    Q^i\co H^n(X) \to H^{n-i}(X).
  \]
  It turns out that these are precisely the \emph{\index{Steenrod operations}Steenrod operations}:
  \[
    Sq^i = Q^{-i}.
  \]
  From this point of view, the identity $Q^0 x = x$ is not obvious. In
  fact, \index{Mandell}Mandell has shown that this identity is characteristic of
  algebras that come from spaces: the functor
  $X \mapsto (H\overline{\mb F}_p)^X$ from spaces to
  $E_\infty$-algebras over the \index{Eilenberg--Mac Lane spectrum}Eilenberg--Mac Lane spectrum
  $H\overline{\mb F}_p$ is fully faithful, and the essential image is
  detected in terms of the coefficient ring being generated by classes
  that are annihilated by the analogue at arbitrary primes of the
  identity $(Q^0 - 1)$ \cite{mandell-einftypadic}.
\end{example}

\begin{example}
  In the case $n = \infty$ there is always a straightforward basis for
  the \index{free algebra}free algebra. If $\{e_i\}$ is a basis of a graded vector space
  $V$ over $\mb F_2$, then the free algebra $\mb Q_{E_\infty}(V)$ is a
  graded polynomial algebra
  \[
    \mb F_2[Q^J e_i]
  \]
  as we range over generators $Q^J e_i = Q^{j_1} \dots Q^{j_p} e_i$
  such that $j_i \leq 2j_{i+1}$ and $j_1 - j_2 - \dots - j_p > |e_i|$.
\end{example}

\subsection{Iterated loop spaces}
\index{iterated loop space}
The following is an unpointed \index{group-completion}group-completion theorem for
$E_n$-spaces.

\begin{theorem}[{\cite[III.3.3]{cohen-lada-may-homology}}]
  For any space $X$ and any $1 \leq n \leq \infty$, the map
  $X \to \Omega^n \Sigma^n X_+$ induces a map \index{free algebra}
  $\Free_{E_n}(X) \to \Omega^n \Sigma^n X_+$, and the resulting ring map
  \[
    \mb Q_{E_n}(H_* X) = H_*(\Free_{E_n}(X)) \to
    H_*(\Omega^n \Sigma^n X_+)
  \]
  is a localization which inverts the images of $\pi_0(X)$.
\end{theorem}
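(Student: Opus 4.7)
The plan is to identify the unpointed free $E_n$-algebra on $X$ with May's pointed configuration-space model $C_n(X_+)$, so that the stated map becomes the classical approximation map $C_n(X_+) \to \Omega^n\Sigma^n X_+$. For a based space $Y$, this model is
\[
  C_n(Y) = \coprod_{k \geq 0} \mathcal{E}_n(k) \times_{\Sigma_k} Y^k \big/ \sim,
\]
where $\sim$ identifies a configuration whose $i$-th label is the basepoint with the configuration obtained by deleting that label. Taking $Y = X_+$ simply erases occurrences of the disjoint basepoint, giving a canonical isomorphism of $E_n$-algebras $C_n(X_+) \cong \coprod_k \mathcal{E}_n(k)\times_{\Sigma_k} X^k = \Free_{E_n}(X)$.

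Next I would construct the map of the theorem using the unit $X_+ \to \Omega^n\Sigma^n X_+$ of the $(\Sigma^n,\Omega^n)$-adjunction and the iterated-loop $E_n$-structure on the target, extending by the universal property of the free algebra. For $n\geq 1$, $\Sigma^n X_+$ is at least $0$-connected, so $\Omega^n\Sigma^n X_+$ is an $E_n$-space whose monoid of components is $\mb Z[\pi_0(X)]$, the group completion of $\pi_0(\Free_{E_n}(X)) = \mb N[\pi_0(X)]$.

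At this point the theorem becomes a special case of the group-completion form of May's approximation theorem: for any based space $Y$, the map $C_n(Y) \to \Omega^n\Sigma^n Y$ induces on homology the localization of $H_*(C_n Y)$ at the submonoid $\pi_0(C_nY) \subset H_0(C_nY)$. Specializing to $Y = X_+$, this submonoid is $\mb N[\pi_0(X)]$, freely generated by the classes $[x] \in H_0 \Free_{E_n}(X)$ of points $x \in X$, so the induced map is the localization of $\mb Q_{E_n}(H_*X)$ at the images of $\pi_0(X)$.

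The main obstacle is the group-completion step itself, which is the only nontrivial geometric input. Standard proofs use May's quasifibration argument to induct on $n$, ultimately reducing to the McDuff--Segal theorem for topological monoids in the case $n=1$; alternatively, Dunn additivity lets one identify the iterated bar construction $B^n \Free_{E_n}(X_+) \simeq \Sigma^n X_+$ and deduce the result from the $E_1$ case. The non-connectedness of $X_+$ is exactly what prevents the approximation map from being a weak equivalence before group completion, and the homological localization is the best one can hope for in general; granted this input, the identifications in the first three paragraphs yield the stated ring-theoretic localization.
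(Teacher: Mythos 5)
Your proposal is correct and follows the same route the paper intends: the paper states this result as a citation to Cohen--Lada--May III.3.3, and its accompanying remark explains it as a consequence of the standard (pointed) approximation/group-completion theorem, which is exactly your reduction via the identification $\Free_{E_n}(X) \cong C_n(X_+)$ and the group-completion form of May's approximation theorem. The only point to keep in mind is that for $n=1$ the localization at the non-central monoid $\pi_0(X)$ has to be interpreted with the usual care in the McDuff--Segal statement, but this is part of the standard input you already flag.
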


\begin{remark}
  A pointed version of the group-completion theorem, involving
  $\Omega^n \Sigma^n X$, is much more standard and implies this
  one. This theorem holds for $\Omega^n \Sigma^n$ if we replace
  $\Free_{E_n}$ with a version that takes the basepoint to a unit and
  we replace $\mb Q_{E_n}(H_* X)$ with either
  $\mb Q_{E_n}(\widetilde H_* X)$ a reduced version
  $\widetilde{\mb Q}_{E_n}$ that sends a chosen element to the unit.
  However, we wanted to give a version that de-emphasizes implicit
  basepoints for comparison with \S\ref{sec:TAQ}.
\end{remark}

\begin{proposition}
  Suppose $Y$ is a pointed space. Then the suspension map
  \[
    \sigma\co \widetilde H_*(\Omega^n Y) \to \widetilde H_{*+1}
    (\Omega^{n-1} Y),
  \]
  induced by the map $\Sigma \Omega^n Y \to \Omega^{n-1} Y$, is
  compatible with the Dyer--Lashof operations and the \index{Browder bracket}Browder bracket:
  \begin{align*}
    \sigma(Q^r x) &= Q^r(\sigma x)\\
    \sigma[x,y] &= [\sigma x, \sigma y]
  \end{align*}
  In particular, in the bar \index{spectral sequence!bar}spectral sequence 
  \[
    \Tor^{H_* \Omega^n Y}_{**} (\mb F_2, \mb F_2) \Rightarrow
    H_* \Omega^{n-1} Y,
  \]
  the operations on the image $\widetilde H_* \Omega^n Y
  \twoheadrightarrow \Tor_1^{H_* \Omega^n Y}(\mb F_2, \mb
  F_2)$ are representatives for the operations on $H_* \Omega^{n-1}
  Y$.
\end{proposition}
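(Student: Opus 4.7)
The plan is to deduce both compatibilities from the stability of Dyer--Lashof operations and Browder brackets (Theorem~\ref{thm:omnibus-en}) together with the description of the homology suspension as a connecting homomorphism. First I would identify $\sigma$ with the boundary map for the path-loop fibration $\Omega^n Y \to P\Omega^{n-1}Y \to \Omega^{n-1} Y$: since $P\Omega^{n-1}Y$ is contractible, the adjunction counit $\Sigma \Omega^n Y \to \Omega^{n-1} Y$ that defines $\sigma$ is, up to sign, the connecting map of this fibration, which is exactly the form of boundary map considered in the Mayer--Vietoris discussion of Section~\ref{sec:stability}.

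The second step is to align the operadic structures. Viewing $\Omega^n Y = \Omega(\Omega^{n-1} Y)$ through Dunn additivity exhibits its $E_n$-structure as an outer $E_1$ (loop multiplication) assembled with the inner $E_{n-1}$-structure pulled back from the base. Under the operad map $E_{n-1} \to E_n$, the restriction of the $E_n$-structure on $\Omega^n Y$ to an $E_{n-1}$-structure agrees with the one inherited by $\Omega \Omega^{n-1} Y$ from its base, and by the Extension axiom of Theorem~\ref{thm:omnibus-en} the corresponding Dyer--Lashof operations (and Browder brackets) on $\widetilde H_*(\Omega^n Y)$ are the ones that could conceivably suspend to operations for the $E_{n-1}$-algebra $\Omega^{n-1} Y$.

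With these identifications, the formulas $\sigma(Q^r x) = Q^r(\sigma x)$ and $\sigma[x,y] = [\sigma x, \sigma y]$ follow from the compatibility $\partial \circ \sigma\theta \sim \theta \circ \partial$ of Section~\ref{sec:stability}, applied to the operations $\theta = Q^r$ and $\theta = [-,-]$: the Stability axiom tells us that in upper indexing $\sigma Q^r = Q^r$, and an analogous bilinear stability identifies the Browder bracket with its suspension (the degree shift $r+(n-1)+s \mapsto (r{+}1)+(n{-}2)+(s{+}1) + 0$ matches). For the bar spectral sequence statement, $\Omega^{n-1} Y$ is the realization of the simplicial bar object $B_\bullet \Omega^n Y$, which is degree-wise a smash product and therefore inherits the Dyer--Lashof operations and bracket functorially. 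The edge homomorphism $\widetilde H_*(\Omega^n Y) \twoheadrightarrow \Tor^{H_*\Omega^n Y}_1(\mb F_2,\mb F_2) \hookrightarrow \mathrm{gr}\, H_*(\Omega^{n-1}Y)$ factors the suspension $\sigma$ and commutes with these operations by naturality, giving the representative statement.

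The main obstacle is the identification in the first step: verifying that the homology suspension coming from $\Sigma \Omega^n Y \to \Omega^{n-1}Y$ really agrees with the connecting map to which the stability results apply, with consistent orientations of the circles and correct Koszul signs. The author explicitly warns about analogous sign difficulties in Section~\ref{sec:stability}, and that bookkeeping---rather than any new geometric input---is the substantive work of the proof.
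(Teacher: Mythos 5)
There is a genuine gap, and it sits exactly where you locate the ``substantive work.'' First, a point of comparison: the paper itself offers no proof of this proposition (it is survey material; the statement is classical, due to Cohen--Lada--May), so the question is whether your reduction closes on its own, and it does not. Your step 1 asserts that the counit $\Sigma \Omega^n Y \to \Omega^{n-1}Y$ is ``the connecting map'' of the path--loop fibration and hence an instance of the boundary maps treated in \S\ref{sec:stability}. But the compatibility $\partial \circ \sigma\theta \sim \theta \circ \partial$ proved there is for homotopy \emph{pullback squares of $\oO$-algebras in $\Mod_E$}, and applying $H \otimes \Sigma^\infty_+(-)$ to the path--loop square does not produce such a square: that functor preserves colimits, not limits, and the comparison of $H[\Omega^n Y]$ with the pullback of $H \leftarrow H[\Omega^{n-1}Y] \rightarrow H[P\Omega^{n-1}Y]$ is an Eilenberg--Moore completion issue requiring its own convergence argument. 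Moreover, even granting a pullback square, the boundary map of \S\ref{sec:stability} runs $\widetilde H_{q+1}(\Omega^{n-1}Y) \to \widetilde H_q(\Omega^n Y)$ (the transgression direction), which is not the homology suspension. Identifying $\sigma$ with a structured map to which an operation-compatibility statement applies is precisely the content of the proposition; classically it is established by working with relative extended powers on the pair $(P\Omega^{n-1}Y, \Omega^n Y)$, or by the bar-filtration argument sketched in Remark~\ref{rmk:barsseq}, not by quoting the stable pullback compatibility.

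Second, the bracket formula is not an instance of the one-variable suspension of operations at all: $\sigma[x,y] = [\sigma x, \sigma y]$ suspends \emph{both} inputs and trades the $E_n$-bracket (degree shift $n-1$) for the $E_{n-1}$-bracket (degree shift $n-2$), absorbing one suspension. The ``analogous bilinear stability'' you invoke is exactly what needs proof, and it cannot follow from a formal schema valid for arbitrary bilinear operations: the homology suspension annihilates Pontrjagin products (e.g.\ $x_1^2 \mapsto 0$ for $\Omega S^2 \to S^2$), so any argument that would equally apply to the multiplication is wrong; the bracket case works because the defining class $\lambda \in \pi_{n-1}\mathcal{E}_n(2)$ maps compatibly to the corresponding class for $\mathcal{E}_{n-1}$ under the evaluation/delooping, which is genuine geometric input about the pair or the weight-$2$ layer of the bar filtration, not sign bookkeeping. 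Relatedly, your bar spectral sequence paragraph glosses that the simplicial bar object is not a diagram of $E_n$-maps (the multiplication is only an $E_{n-1}$-map by Dunn additivity), and that $B\Omega^n Y \simeq \Omega^{n-1}Y$ needs a connectivity or group-completion hypothesis; these can be repaired, but the two gaps above are the missing core of the argument.
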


This provides some degree of conceptual interpretation for the bracket
and the Dyer--Lashof operations. Since $H_* \Omega^2 Y$ is
commutative, the $\Tor$-algebra is also commutative even though it is
converging to the possibly noncommutative ring $H_* \Omega Y$, and so
the noncommutativity is tracked by multiplicative extensions in the
spectral sequence \cite{ni-bracket}. The Browder bracket in
$H_* \Omega^n Y$ exists to remember that, after $n-1$ deloopings,
there are commutators $xy \pm yx$ in $H_* \Omega Y$.

Similarly, elements in positive filtration in the $\Tor$-algebra of a
commutative ring always satisfy $x^2 = 0$, even though this may not be
the case in $H_* \Omega^{n-1} Y$. The element $Q_0 x$ is $x^2$; the
elements $Q_1 x, Q_2 x, \dots, Q_{n-1} x$ determine the line of
succession for the property of being $x^2$ as the delooping process is
iterated.

\begin{remark}
  \label{rmk:barsseq}
  The group-completion theorem allows us to relate the homology of a
  delooping to certain nonabelian \index{derived functors!nonabelian}derived functors
  \cite{miller-delooping}. Similar spectral sequences computing
  $E_n$-homology of chain complexes have been studied by \index{Richter}Richter and
  \index{Ziegenhagen}Ziegenhagen \cite{richter-ziegenhagen-spectralsequence}.

  Associated to the $n$-fold loop space $\Omega^n Y$ of an
  $(n-1)$-connected space, which is an $E_n$-algebra (or an infinite
  loop space $\Omega^\infty Y$ associated to a connective
  spectrum), we can construct three augmented simplicial objects:
  \begin{align*}
    \cdots \Free_{E_n} \Free_{E_n} \Free_{E_n} \Omega^n Y \Rrightarrow
    \Free_{E_n} \Free_{E_n} \Omega^n Y \Rightarrow \Free_{E_n}
    \Omega^n Y
    & \rightarrow \Omega^n Y\\
    \cdots \Omega^n \Sigma^n_+ \Free_{E_n} \Free_{E_n}  \Omega^n Y \Rrightarrow \Omega^n
    \Sigma^n_+ \Free_{E_n} \Omega^n Y \Rightarrow \Omega^n \Sigma^n_+
    \Omega^n Y
    & \rightarrow \Omega^n Y\\
    \cdots \Sigma^n_+ \Free_{E_n} \Free_{E_n} \Omega^n Y \Rrightarrow
    \Sigma^n_+ \Free_{E_n} \Omega^n Y \Rightarrow \Sigma^n_+ \Omega^n Y
    & \rightarrow Y
  \end{align*}
  These are, respectively, two-sided \index{bar construction}bar constructions: $B(\Free_{E_n}, \Free_{E_n},
  \Omega^n Y)$, $B(\Omega^n \Sigma^n_+, \Free_{E_n}, \Omega^n Y)$, and
  $B(\Sigma^n_+, \Free_{E_n}, \Omega^n Y)$.

  The first augmented bar construction $B(\Free_{E_n}, \Free_{E_n}, \Omega^n
  Y)$ has an extra degeneracy, and so its \index{geometric realization}geometric realization is
  homotopy equivalent to $\Omega^n Y$ as $E_n$-spaces. Therefore, it
  is a group-complete $E_n$-space.

  There is a natural map
  \[
    B(\Free_{E_n}, \Free_{E_n}, \Omega^n Y) \to B(\Omega^n \Sigma^n_+,
    \Free_{E_n}, \Omega^n Y)
  \]
  which is, levelwise, a \index{group-completion}group-completion map \cite[Appendix
  Q]{quillen-appendixq}, \cite{mcduff-segal-groupcompletion}, and induces a
  group-completion map on geometric realization. However, the source
  is already group-complete, and so this map is an equivalence on
  geometric realizations. Thus, the augmentation
  $|B(\Omega^n \Sigma^n_+, \Free_{E_n}, \Omega^n Y)| \to \Omega^n Y$ is an
  equivalence.

  The bar construction $B(\Sigma^n_+, \Free_{E_n}, \Omega^n Y)$ is a
  simplicial diagram of $(n-1)$-connected pointed spaces,
  and so by a theorem of May \cite{may-loopspaces} we can commute
  $\Omega^n$ across geometric realization. The natural augmentation
  \[
    \Omega^n |B(\Sigma^n_+, \Free_{E_n}, \Omega^n Y)| \to |B(\Omega^n
    \Sigma^n_+, \Free_{E_n}, \Omega^n Y)| \to \Omega^n Y
  \]
  is an equivalence. By assumption, $Y$ is $(n-1)$-connected and so
  $|B(\Sigma^n_+, \Free_{E_n}, \Omega^n Y)| \to Y$ is also an
  equivalence. Therefore, the simplicial object
  $B(\Sigma^n_+, \Free_{E_n}, \Omega^n Y)$ can be used to compute
  $H_* Y$.
  
  Let $A = H_*(\Omega^n Y)$. The reduced homology of
  $B(\Sigma^n_+, \Free_{E_n}, \Omega^n Y)$ is
  \[
    \cdots \Sigma^n \mb Q_{E_n} \mb Q_{E_n} A \Rrightarrow \Sigma^n
    \mb Q_{E_n} A \Rightarrow \Sigma^n  A,
  \]
  which is a bar complex $\Sigma^n B(Q,\mb Q_{E_n}, A)$ computing
  nonabelian derived functors. These are specifically the derived
  functors of an \emph{\index{indecomposables}indecomposables} functor
  $Q$, which takes an augmented $\mb Q_{E_n}$-algebra $A \to \mb F_2$
  and returns the quotient of the augmentation ideal by all products,
  brackets, and Dyer--Lashof operations. The result is a \index{spectral sequence!Miller} Miller spectral
  sequence that begins with nonabelian derived functors of $Q$ on
  $H_*(\Omega^n Y)$ and converges to $\widetilde H_* Y$.
\end{remark}

\subsection{Classical groups}

The Dyer--Lashof operations on the homology of the spaces $BO$ and
$BU$, and hence on the homology of the \index{Thom spectrum}Thom spectra $\index{$MO$}MO$
and $\index{$MU$}MU$, was determined by work of \index{Kochman}Kochman \cite{kochman-dyerlashof};
here we will state a form due to \index{Priddy}Priddy \cite{priddy-dyerlashof}.

\begin{theorem}
  \label{thm:bordismoperations}
  The ring $H_* MO \cong H_* BO$ is a polynomial algebra on classes
  $a_i$ in degree $i$. The Dyer--Lashof operations are determined
  by the identities of formal series
  \[
    \sum_j Q^j a_k = \left(\sum_{n=k}^\infty \sum_{u=0}^k
      \binom{n-k+u-1}{u} a_{n+u} a_{k-u}\right)\left(\sum_{n=0}^\infty
      a_n\right)^{-1},
  \]
  where $a_0 = 1$ by convention. In particular, $Q^n a_k \equiv
  \binom{n-1}{k} a_{n+k}$ mod decomposable elements.

  The ring $H_* MU \cong H_* BU$ is a polynomial algebra on classes
  $b_i$ in degree $2i$, The Dyer--Lashof operations are determined
  by the identities of formal series
  \[
    \sum_j Q^j b_k = \left(\sum_{n=k}^\infty \sum_{u=0}^k
      \binom{n-k+u-1}{u} b_{n+u} b_{k-u}\right)\left(\sum_{n=0}^\infty
      b_n\right)^{-1},
  \]
  where $b_0 = 1$ by convention. In particular, $Q^{2n} b_k \equiv
  \binom{n-1}{k} b_{n+k}$ mod decomposable elements, and $Q^{2n+1} b_k
  = 0$.
\end{theorem}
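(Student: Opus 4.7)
The plan is to reduce the computation to a simpler calculation on $BO(1) = \mathbb{RP}^\infty$ (resp.\ $BU(1) = \mathbb{CP}^\infty$), and then to extract the full generating-series identity by bootstrapping via the Cartan formula.

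First I would invoke the Thom isomorphism $H_*MO \cong H_*BO$, which is multiplicative because the $E_\infty$ ring structure on $MO$ is induced by Whitney sum on $BO$; the polynomial generators $a_k$ are chosen to correspond to the images of the classical generators $\alpha_k \in H_k\mathbb{RP}^\infty = H_kBO(1)$ under the natural map $BO(1) \to BO$. The core step is to establish the leading-order congruence
\[
  Q^n a_k \equiv \binom{n-1}{k}\, a_{n+k} \pmod{\text{decomposables}},
\]
which I would verify following Kochman by running the mod-$2$ Serre spectral sequence of the path-loop fibration $O \to EO \to BO$ and applying Kudo's transgression theorem to transport the (known) Dyer--Lashof operations on $H_*O$ to operations on $H_*BO$; alternatively, following Priddy one can work with Thomified maps $\bigvee_n \Sigma^\infty(\mathbb{RP}^\infty_+)^{\wedge n} \to MO$ and detect operations via their action on the image of $BO(1)$.

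Next, I would bootstrap the leading-order congruence to the full formula using the Cartan formula. Packaging the generators as $A(s) = \sum_k a_k s^k$ with $a_0 = 1$, the Cartan relation $Q^r(xy) = \sum_{p+q=r} Q^p(x)Q^q(y)$ lets one treat $Q^*$ as acting coefficient-wise on products of series. The claimed identity takes the cleaner form
\[
  \Bigl(\sum_j Q^j a_k\Bigr) \cdot A(s) \;=\; \sum_{n \geq k}\sum_{u=0}^k \binom{n-k+u-1}{u}\, a_{n+u}\, a_{k-u},
\]
which I would verify by induction on $k$, matching coefficients in each homogeneous degree against the leading-order congruence and the Cartan relations. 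For $MU$ the argument proceeds identically with $\mathbb{CP}^\infty$ replacing $\mathbb{RP}^\infty$ and all degrees doubled; the vanishing $Q^{2n+1} b_k = 0$ is immediate, since $|b_k| = 2k$ is even and $H_{\mathrm{odd}} BU = 0$.

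The main obstacle is justifying the leading-order congruence. A direct naturality argument from $BO(1)$ fails because $BO(1) \hookrightarrow BO$ is not a map of $E_\infty$-spaces: the abelian group structure on $\mathbb{RP}^\infty$ corresponds to tensor product of line bundles, while the $E_\infty$-structure on $BO$ comes from Whitney sum. Navigating this mismatch --- either by passing to Thom spectra, where the relevant map becomes $E_\infty$-structured, or by the indirect spectral-sequence route via Kudo's theorem --- is the technical heart of the Kochman--Priddy argument, and it is here that the binomial coefficients $\binom{n-1}{k}$ first enter.
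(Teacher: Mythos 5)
The paper itself gives no proof of this theorem: it is quoted from Kochman and Priddy, and the only argument supplied is the subsequent remark that the Thom isomorphism respects Dyer--Lashof operations because Lewis's Thom diagonal $H[BO] \to H \otimes MO$ is an equivalence of structured ring spectra --- note that this is genuinely stronger than the multiplicativity of the Thom isomorphism, which is all you invoke. Measured on its own terms, your sketch has a real gap at the ``bootstrap'' step. The Cartan formula determines $Q^j$ on \emph{decomposable elements} from its values on the polynomial generators; it places no constraint whatsoever on the decomposable part of $Q^j a_k$ itself, and that decomposable part is exactly what the theorem asserts (e.g.\ $Q^4 b_1 = b_3 + b_1 b_2 + b_1^3$: the terms $b_1 b_2 + b_1^3$ are invisible both to the congruence mod decomposables and to the Cartan relations). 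So an ``induction on $k$, matching coefficients against the leading-order congruence and the Cartan relations'' has no mechanism to produce the coefficients $\binom{n-k+u-1}{u}$ or the correction factor $\left(\sum_n a_n\right)^{-1}$; pinning down those terms is precisely the computational content of Kochman's and Priddy's papers, which you have in effect deferred to for the easy part (the leading term) while claiming to derive the hard part by an argument that cannot succeed.

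What actually closes the gap is a comparison with an object on which the operations are already known and whose homology hits the relevant classes: Priddy evaluates the extended square $E\Sigma_2 \times_{\Sigma_2} BO(1)^2 \simeq B(\mathbb{Z}/2 \wr \mathbb{Z}/2) \to BO(2) \to BO$ (Whitney sum of line bundles is $\Sigma_2$-equivariant), so that $Q^j a_k$ is the image of an explicitly computable class in the homology of the wreath product; equivalently one can push forward the known operations along the infinite loop map $QS^0 \to \mathbb{Z} \times BO$, whose image generates $H_*BO$ mod 2. That is where all of the binomial coefficients, not just the leading one, come from. Relatedly, your proposed repair ``passing to Thom spectra, where the relevant map becomes $E_\infty$-structured'' does not work as stated: $MO(1) \to MO$ is not a map of ring spectra (the source is not one), so Thomifying does not fix the failure of $BO(1) \to BO$ to respect the $E_\infty$ structures; the equivariance you need lives at the level of the extended square just described. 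The points you do get cheaply are fine: the choice of generators, the reduction of $MU$ to the same pattern with degrees doubled, and $Q^{2n+1} b_k = 0$ because $H_*BU$ is concentrated in even degrees.
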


\begin{remark}
  Implicit in this calculation is the fact that the \index{Thom isomorphism}Thom isomorphisms
  $H_* MO \cong H_* BO$ and $H_* MU \cong H_* BU$ preserve
  Dyer--Lashof operations. \index{Lewis}Lewis showed that, for an $E_n$-map $f\co X
  \to BGL_1(\mb S)$, the Thom isomorphism $H_* X \cong H_* Mf$ lifts
  to an equivalence of $E_n$ ring spectra
  \[
    H [X] \to H \otimes Mf
  \]
  called the Thom diagonal \cite[7.4]{lewis-may-steinberger}. As a
  result, the Thom isomorphism is automatically compatible with
  Dyer--Lashof operations for $H$-algebras.
\end{remark}

\begin{example}
  \label{ex:MUcalcs}
  We have explicit calculations of the first few Dyer--Lashof
  operations in $H_* MO$:
  \begin{align*}
    Q^2 a_1 &= a_1^2\\
    Q^4 a_1 &= a_3 + a_1 a_2 + a_1^3\\
    Q^6 a_1 &= a_1^4\\
    Q^8 a_1 &= a_5 + a_1 a_4 + a_2 a_3 + a_1^2 a_3 + a_1 a_2^2 + a_1^3
    a_2 + a_1^5\\
    Q^6 a_2 &= a_5 + a_1 a_4 + a_2 a_3 + a_1 a_2^2
  \end{align*}
  These same formulas hold for the $b_i$ in $H_* MU$.
\end{example}

\subsection{The Nishida relations and the \index{dual Steenrod algebra}dual Steenrod algebra}
\index{Nishida relations|(}

Recall that, if $R$ is an $E_n$-algebra in $\Sp$, $H \otimes R$
is an $E_n$-algebra in $\Mod_H$ whose homotopy groups are the
homology groups of $R$. As a result, there are two types of operations
on $H_* R$:
\begin{itemize}
\item The $E_n$-algebra structure gives $H_*(R)$ Dyer--Lashof
  operations $Q_0, \dots, Q_{n-1}$ and a \index{Browder bracket}Browder bracket.
\item The property of being homology gives $H_*(R)$ \index{Steenrod operations}Steenrod
  operations $P_d\co H_m R \to H_{m-d} R$. To make these dual to the
  Steenrod operations $Sq^d$ in cohomology, $P_d(x)$ is defined as a
  composite
  \[
    S^{m-d} \too{\Sigma^{-d} x} (\Sigma^{-d} H) \otimes R \too{\chi
      Sq^d} H \otimes R.
  \]
  This implicitly reverses multiplication order: for example,
  the \index{Adem relations}Adem relation $Sq^3 = Sq^1 Sq^2$ becomes $P_3 = P_2 P_1$.
\end{itemize}

The Nishida relations express how these structures interact.
\begin{theorem}[{\cite[III.1.1, III.3.2]{bmms-hinfty}}]
  Suppose that $R$ is an $E_n$-algebra in $\Sp$. Then the Steenrod
  operations in homology satisfy relations as follows.
  \begin{quote}
    \begin{description}
    \item[\index{Cartan formula}Cartan formula:] $P_r (xy) = \sum_{p+q=r} P_p(x) P_q(y)$.
    \item[\index{Browder Cartan formula}Browder Cartan formula:]
      $P_r [x,y] = \sum_{p+q=r} [P_p x, P_q y]$.
    \item[Nishida relations:]
      $P_r Q^s = \sum \binom{s-r}{r-2i} Q^{s-r+i} P_i$ if $s < n-1$.
    \item[Top Nishida relation:]
      \[P_r Q^{n-1}(x) = \sum \binom{n-1-r}{r-2i} Q^{n-1-r+i} P_i +
      \sum_{p+q=r, p<q} [P_p x, P_q x].\]
    \end{description}
  \end{quote}
\end{theorem}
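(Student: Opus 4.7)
The plan is to use the naturality and representability framework of Section~\ref{sec:representability} to reduce each of the four identities to a computation on a single universal example. Every operation $\pi_m \to \pi_n$ on $E_n$-algebras in $\Mod_H$ is detected on a fundamental class of $H \otimes \Free_{E_n}(S^m)$, so a relation of the form $P_r Q^s = \sum c_i Q^{s-r+i} P_i$ holds in general as soon as it holds after evaluation on this class. Since both sides of each relation are natural transformations, this is the only piece of input that is actually global.

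For the two Cartan formulas the argument is essentially formal and requires no input from stunted projective spaces. The multiplication $\mu\colon R\otimes R\to R$ and the Browder bracket are maps of $H$-modules (the latter being built from $\mu$ and the little-cubes generator $\lambda$), so applying $P_r$ to $\mu_*(x \otimes y)$ or to $[x,y]$ and using naturality reduces to the behavior of $P_r$ on $H_*R \otimes H_*R$. That $P_r$ satisfies the Cartan formula on such a tensor product is the content of the fact that $H$ itself is an $E_\infty$-algebra and the Steenrod operations are its canonical weight-$2$ power operations, so they are automatically compatible with the lax monoidal structure of $H\otimes(-)$.

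For the Nishida relations proper I would use the Thom description from Remark~\ref{rmk:thom}: the group of weight-$2$ power operations $\Pow^H_{E_n}(m,2)$ is naturally $H_*(\mathbb{RP}^{m+n-1}_m)$, and the classes $Q^{m+i}(\iota_m)$ correspond to the cell in dimension $m+i$ under the Thom isomorphism for $m\overline\rho$. The natural action of Steenrod operations on any $H$-homology group is thus computed, on the universal example, by $P_r$ acting on $H_*(\mathbb{RP}^{m+n-1}_m)$; this is in turn determined by the classical formula for $Sq^r$ on the cohomology of $\mathbb{RP}^\infty$ together with the Thom isomorphism. After rewriting the binomial coefficients one recovers exactly $\sum \binom{s-r}{r-2i} Q^{s-r+i} P_i(\iota_m)$ as long as the image of $P_r$ lies in the interior of the stunted projective space, i.e.\ for $s < n-1$.

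The main obstacle is the top Nishida relation. The class $Q^{n-1}(\iota_m)$ lives on the top cell of $\mathbb{RP}^{m+n-1}_m$, and the Steenrod action cannot be read off from the interior alone: the attaching map of that top cell encodes the fact that in the next stunted projective space the top class does \emph{not} simply suspend, and the deviation is precisely what is measured by the Browder bracket (compare Example~\ref{ex:cuponesquare}). Tracking this through the cofiber sequence relating successive stunted projective spaces produces the correction $\sum_{p+q=r,\,p<q}[P_p x, P_q x]$. The cleanest organization is to fix compatible cell structures across an inclusion $\mathbb{RP}^{m+n-1}_m \hookrightarrow \mathbb{RP}^{m+2(n-1)}_m$, compute $P_r$ there using the Thom isomorphism, and identify the terms that fail to appear as bona fide $Q^{?}P_i$ images with Browder pairings; the bookkeeping heart of the argument is matching the binomial coefficients and the range of summation, which is where the Cohen--Lada--May computation in the homology of configuration spaces does the real work.
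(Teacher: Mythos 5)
The paper does not prove this theorem; it cites it to \cite[III.1.1, III.3.2]{bmms-hinfty}, so your sketch has to stand on its own, and it has a genuine gap at its central step. The reduction ``check the relation on the fundamental class of $H \otimes \Free_{E_n}(S^m)$'' does not work for the Nishida relations, for two reasons. First, the Steenrod operations $P_r$ are not morphisms in $\Mod_H$ (they are induced by $\chi Sq^r\co H \to \Sigma^r H$, which is not $H$-linear), so they are not part of the algebra of homotopy operations for $E_n$-algebras in $\Mod_H$ that the representability framework of \S\ref{sec:representability} classifies; the correct universal objects are free $E_n$-ring spectra $\Free_{E_n}(X)$ on a \emph{general} spectrum $X$, with naturality taken in $\Sp$. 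Second, and decisively, on the sphere class $\iota_m$ all higher Steenrod operations vanish: $P_i\iota_m = 0$ for $i>0$, so on your universal example the right-hand side of the Nishida relation collapses to the single term $\binom{s-r}{r}Q^{s-r}\iota_m$, and the bracket correction in the top relation vanishes identically (the only candidate term $[\iota_m, P_r\iota_m]$ is zero). Verifying the relation there is therefore (nearly) vacuous and cannot detect the coefficients $\binom{s-r}{r-2i}$ for $i>0$, nor the terms $[P_p x, P_q x]$. Since spheres carry trivial Steenrod action, no naturality argument propagates this special case to a general $R$.

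What the cited proof actually requires is a computation of the Steenrod coaction on the extended power $D_2 X = (X \otimes X)_{h\Sigma_2}$ for an \emph{arbitrary} $X$, expressing $P_r(e_i \otimes x^{\otimes 2})$ in terms of the classes $e_j \otimes (P_a x)^{\otimes 2}$ and the off-diagonal classes $e_0 \otimes (P_a x)(P_b x)$ with $a<b$; the latter are precisely the source of the bracket terms in the top relation, not the attaching map of the top cell of a stunted projective space as you suggest (on $X = S^m$ those off-diagonal classes are invisible). This external calculation --- done via the diagonal of $B\Sigma_2$, the Cartan formula, and the known $Sq$-action on $H^*\mb{RP}^\infty$, and then transported through the $E_n$ structure maps $\mathcal{E}_n(2)_+ \otimes_{\Sigma_2} R^{\otimes 2} \to R$ --- is the real content of Nishida's and Steinberger's arguments, and it is the step your outline delegates to ``bookkeeping'' while restricting to a test object on which all the relevant terms are zero. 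Your treatment of the two Cartan formulas (naturality along the multiplication and the bracket map plus the external Cartan formula) is fine, but the Nishida and top Nishida relations need the general-$X$ extended-power computation, not the $X=S^m$ Thom-spectrum picture alone.
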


\index{lower-indexed operation|see{indexing convention}}
\index{upper-index operation|see{indexing convention}}
\begin{remark}
  By contrast with the \index{Adem relations}Adem relations, the Nishida relations behave
  very differently if we use lower \index{indexing convention}
  indexing. We find
  \[
    P_r Q_s(x) = \sum \binom{|x| + s - r}{r - 2i} Q_{s-r+i} P_i(x).
  \]
  In particular, the lower-indexed Nishida relations depend on the
  degree of $x$ \cite{campbell-cohen-peterson-selich-selfmapsI}.
\end{remark}

\begin{remark}
  \index{representability!by pro-objects}
  If we use the pro-representability of homology as in
  \S\ref{sec:pro-representability}, we can obtain a combined
  algebraic object that encodes both the $Q^r$ and the $P_d$ together
  with the Nishida relations.
\end{remark}

\index{Nishida relations|)}

\subsection{Eilenberg--Mac Lane objects}
\index{dual Steenrod algebra|(}

If the homology $H_* R$ is easily described a module over the Steenrod
algebra, the Nishida relations can completely determine the
Dyer--Lashof operations. This was applied by \index{Steinberger}Steinberger to compute
the Dyer--Lashof operations in the \index{dual Steenrod algebra}dual Steenrod algebra
explicitly. (Conversely, \index{Baker}Baker showed that the \index{Nishida relations}Nishida relations
themselves are completely determined by the Dyer--Lashof operation
structure of the dual Steenrod algebra
\cite{baker-power-operations-coactions}.)

\index{Dyer--Lashof operations!in the dual Steenrod algebra}
\begin{theorem}[{\cite[III.2.2, III.2.4]{bmms-hinfty}}]
  Let $A_*$ be the \index{dual Steenrod algebra}dual Steenrod algebra
  \[
    \mb F_2[\xi_1, \xi_2, \dots]
  \]
  where $|\xi_i| = 2^i - 1$, with conjugate generators $\xx_i$ (here
  $\xi_i$ is denoted by $\zeta_i$ in \cite{milnor-steenrod}). Then the
  Dyer--Lashof operations on the generators are determined by the
  following formulas.
  \begin{enumerate}
  \item There is an identity of formal series
    \[
      (1 + \xi_1 + Q^1 \xi_1 + Q^2 \xi_1 + Q^3 \xi_1 + \dots) =
      (1 + \xi_1 + \xi_2 + \xi_3 + \dots)^{-1}.
    \]
  \item For any $i$, we have
    \[
      Q^s \xx_i = \begin{cases}
        Q^{s + 2^i - 2} \xi_1 &\text{if }s\equiv 0, -1 \mod 2^i,\\
        0 &\text{otherwise.}
      \end{cases}
    \]
  \item In particular, $Q^{2^i - 2} \xi_1 = \xx_i$, and
    $Q_1 \xx_i = \xx_{i+1}$.
  \end{enumerate}
\end{theorem}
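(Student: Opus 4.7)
The strategy is to prove (1) first using the Nishida relations as the bridge between the $E_\infty$-algebra structure on $H \otimes H$ (giving the Dyer--Lashof operations on $A_* = \pi_*(H \otimes H)$) and its right Steenrod-module structure (giving the operations $P_r$); formulas (2) and (3) will then follow from (1), the antipode identity, and the Adem relations.

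For (1), let $f = 1 + \sum_{i \geq 1}\xi_i$ and write $g = f^{-1} = \sum_{k \geq 0} h_k$ with $h_k \in A_k$. Direct expansion gives the base values $h_0 = 1$, $h_1 = \xi_1$, and $h_2 = \xi_1^2 = Q^1\xi_1$ (by the squaring axiom). We aim to show $Q^s\xi_1 = h_{s+1}$ for all $s \geq 1$ by comparing Steenrod transforms on both sides. Since $\xi_1 \in A_1$ is Milnor-dual to $\Sq^1$, we have $P_0\xi_1 = \xi_1$, $P_1\xi_1 = 1$, and $P_d\xi_1 = 0$ for $d \geq 2$; substituting into the Nishida relations and using the unit axiom $Q^t(1) = \delta_{t,0}$ yields
\[
P_r Q^s\xi_1 = \tbinom{s-r}{r}\, Q^{s-r}\xi_1 \text{ for } 1 \leq r \leq s, \qquad P_{s+1} Q^s\xi_1 = 1.
\]
On the candidate side, Milnor's explicit formula for the coaction gives $P_{2^{n-1}}\xi_n = \xi_{n-1}$ and $P_r\xi_n = 0$ for all other $r > 0$; combining this with the Cartan identity $P_r(fg) = \sum_{a+b=r} P_a f \cdot P_b g$ and the equation $fg = 1$ produces a recursion for $P_r g$ in terms of $P_a f$ (which equals $\xi_k$ if $r = 2^k$, else $0$) and lower $P_b g$. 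Extracting the degree-$(s-r+1)$ component shows that $\{h_{s+1}\}$ satisfies the same recursion. Because iterated Steenrod operations land in $A_0 = \mb F_2$ and the pairing $A \otimes A_* \to \mb F_2$ is nondegenerate (Milnor duality), matching all iterated $P$-transforms determines elements uniquely, yielding $Q^s\xi_1 = h_{s+1}$.

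For (3), the antipode identity $\sum_{i+j=n}\xi_i^{2^j}\,\xx_j = 0$ (with $\xi_0 = \xx_0 = 1$) determines the $\xx_i$ inductively; matching this against the recursion $h_k = \sum_{n \geq 1,\, 2^n-1 \leq k} \xi_n\, h_{k-2^n+1}$ derived from $fg = 1$ yields $h_{2^i-1} = \xx_i$, so $Q^{2^i-2}\xi_1 = \xx_i$ by (1). For (2), we write $Q^s\xx_i = Q^s Q^{2^i-2}\xi_1$; the Adem relations (applicable when $s > 2(2^i-2)$) expand this as a sum in which the coefficient $\tbinom{j - 2^i + 1}{2j - s - 2^i + 2}$ vanishes modulo $2$ (by Lucas) except when $s \equiv 0, -1 \pmod{2^i}$, in which case a single term $Q^{s + 2^i - 2}\xi_1$ survives. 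Cases with smaller $s$, where the composition is already admissible, are handled by applying the Cartan formula to $\xx_i = Q^{2^i - 2}\xi_1$ together with (1). Finally, $Q_1 \xx_i = Q^{2^i}\xx_i = Q^{2^{i+1}-2}\xi_1 = \xx_{i+1}$ is the instance $s = 2^i$.

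The heart of the argument lies in the combinatorial matching within the proof of (1): verifying that the inverse series coefficients $\{h_{s+1}\}$ satisfy the same Nishida recursion as $\{Q^s\xi_1\}$ requires carefully tracking Steenrod operations through the expansion $f^{-1} = \sum_{k \geq 0}(f-1)^k$, using the Cartan formula together with the single-generator values $P_{2^{n-1}}\xi_n = \xi_{n-1}$, and matching the resulting mod-$2$ binomial coefficients via Lucas' theorem. Though mechanical, this step is the principal site of algebraic delicacy in the proof.
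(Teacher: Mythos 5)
The paper itself offers no proof of this theorem---it is quoted from Steinberger via \cite[III.2.2, III.2.4]{bmms-hinfty}---so the only internal guide is the surrounding remark that the Nishida relations, together with the known Steenrod action on $A_*$, force the answer, plus the worked proposition about $H_* Hk$ that immediately follows. Your part (1) is exactly that method, and its skeleton is sound: the Nishida relation does give $P_r Q^s \xi_1 = \binom{s-r}{r} Q^{s-r}\xi_1$ for $1 \leq r \leq s$ and $P_{s+1}Q^s\xi_1 = 1$, and nondegeneracy of the Milnor pairing plus induction on degree does reduce (1) to checking that the inverse-series coefficients $h_{s+1}$ satisfy the same identities. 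But that check---which you yourself identify as the heart of the argument---is only asserted, and it is convention-sensitive: the input values $P_{2^{n-1}}\xi_n = \xi_{n-1}$ and $P_r\xi_n = 0$ otherwise hold for the unconjugated dual action, whereas the paper's $P_d$ is built from $\chi Sq^d$, for which, e.g., $P_{2^n-1}\xi_n = 1$; with that reading the verification already fails at $P_5 h_5$ for $h_5 = \xi_1^5 + \xi_1^2\xi_2$. So the one step carrying the content of (1) is left undone and sits precisely on the sign/convention hazard the paper warns about; this is fixable, but it must be pinned down.

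The more serious gaps are in (2) and (3). For (2), the Adem relations apply only when $s > 2(2^i-2)$, and even there they rewrite $Q^s Q^{2^i-2}\xi_1$ as a sum of admissible composites $Q^a Q^b \xi_1$; these are not single operations applied to $\xi_1$, and nothing in your coefficient analysis explains how such composites are evaluated, so the asserted collapse to $Q^{s+2^i-2}\xi_1$ or to $0$ does not follow. In the admissible range $s \leq 2(2^i-2)$---where most of the vanishing in (2) lives---the proposed fallback of ``applying the Cartan formula to $\xx_i = Q^{2^i-2}\xi_1$'' does not parse: the Cartan formula computes $Q$ of a product, and expanding $\xx_i$ as a polynomial in the $\xi_j$ would require knowing $Q^t\xi_j$ for $j \geq 2$, which (1) does not supply and which is essentially what (2) is meant to determine. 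The standard argument for (2) is another Nishida-relation induction on degree, the same device as in (1), applied directly to $Q^s\xx_i$. Finally, for (3), the claim that the degree-$(2^i-1)$ coefficient of $(1+\xi_1+\xi_2+\cdots)^{-1}$ equals $\xx_i$ is true but not a formality: the inverse-series recursion $h_k = \sum_n \xi_n h_{k-2^n+1}$ and the antipode recursion $\xx_n = \sum_{j<n}\xi_{n-j}^{2^j}\xx_j$ are genuinely different, and ``matching'' them is a combinatorial lemma you would have to prove (it checks out in low degrees, e.g.\ $h_7 = \xi_3 + \xi_1\xi_2^2 + \xi_1^4\xi_2 + \xi_1^7 = \xx_3$, but not by inspection). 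As written, only the outline of (1) is on solid ground; (2) and (3) need either the same Nishida-style induction or genuinely new arguments.
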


\begin{remark}
  This allows us to say that the dual Steenrod algebra can
  be re-expressed as follows:
  \[
    A_* \cong \mb F_2[x, Q_1 x, (Q_1)^2 x, \dots]
  \]
  This is the same as the homology of $\Omega^2 S^3$: both are
  identified with the homology of the free $E_2$-algebra on a
  generator $x = \xi_1$ in degree $1$. \index{Mahowald}Mahowald showed that it was
  possible to realize this isomorphism of graded algebras: he
  constructed a \index{Thom spectrum}Thom spectrum over $\Omega^2 S^3$ such that the Thom
  isomorphism realizes the isomorphism $A_* \cong H_* \Omega^2 S^3$
  \cite{mahowald-thomcomplexes}.  This has a rather remarkable
  interpretation: there exists a construction of the
  \index{Eilenberg--Mac Lane spectrum!as an $E_2$-algebra}Eilenberg--Mac Lane spectrum $H$ as the free $E_2$-algebra $R$ such
  that the unit map $\mb S \to R$ has a chosen nullhomotopy of the
  image of $2$. This result has been extended to odd primes by
  \index{Blumberg}Blumberg--\index{Cohen}Cohen--\index{Schlichtkrull}Schlichtkrull
  \cite{blumberg-cohen-schlichtkrull-thom}.
\end{remark}

\begin{proposition}
  Let $Hk$ be the Eilenberg--Mac Lane spectrum for an algebra $k$
  over $\mb F_2$. Then there is an isomorphism
  \[
    H_* H\mb F \cong A_* \otimes k
  \]
  of graded rings, and under this identification the Dyer--Lashof
  operation $Q^r$ on $H_* H k$ is given by $Q^r \otimes \varphi$,
  where $\varphi$ is the Frobenius on $k$.
\end{proposition}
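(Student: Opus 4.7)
Since $k$ is free as an $\mb F_2$-module, a choice of basis gives an equivalence of spectra $Hk \simeq \bigoplus H$, so $H \otimes Hk \simeq (H \otimes H) \otimes_{\mb F_2} k$ and $H_*(Hk) \cong A_* \otimes_{\mb F_2} k$ as graded abelian groups. The multiplication on $Hk$ factors as the tensor product of the multiplication on $H$ and the multiplication on $k$, so the induced ring structure on $A_* \otimes k$ is the tensor product of rings. This establishes the ring isomorphism.

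For the Dyer--Lashof operations, the plan is to write a general element $x \otimes a \in A_* \otimes k$ as the product $(x \otimes 1)(1 \otimes a)$ and apply the Cartan formula together with naturality. The key point is that $Hk$ is itself an $E_\infty$-$H$-algebra via the ring map $H \to Hk$ induced by $\mb F_2 \to k$, and its homotopy is concentrated in degree zero. Thus for $a \in \pi_0(Hk) = k$ the squaring identity gives $Q^0(a) = a^2 = \varphi(a)$, while $Q^r(a) = 0$ automatically for $r > 0$ since $\pi_r(Hk) = 0$ there.

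The inclusion $\iota\co Hk \to H \otimes Hk$ sending $x \mapsto 1 \otimes x$ is an $E_\infty$-$H$-algebra map when $H \otimes Hk$ is regarded as an $H$-algebra via its left factor, because rigidity of $H = H\mb F_2$ as the initial $E_\infty$-ring of characteristic $2$ forces the two evident $E_\infty$-ring maps $H \to H \otimes Hk$ (via the left factor versus via the composite through $Hk$) to be homotopic. Naturality of Dyer--Lashof operations under $\iota$ then gives $Q^r(1 \otimes a) = 0$ for $r > 0$ and $Q^0(1 \otimes a) = 1 \otimes \varphi(a)$; symmetrically, naturality under the $E_\infty$-$H$-algebra map $H \otimes H \to H \otimes Hk$ induced by $H \to Hk$ gives $Q^p(x \otimes 1) = Q^p(x) \otimes 1$ for $x \in A_*$. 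The Cartan formula applied to $x \otimes a = (x \otimes 1)(1 \otimes a)$ then kills every term with $q > 0$, leaving
\[
  Q^r(x \otimes a) = \left(Q^r(x) \otimes 1\right)\left(1 \otimes \varphi(a)\right) = Q^r(x) \otimes \varphi(a),
\]
as required. The main obstacle is justifying that $\iota$ is an $E_\infty$-$H$-algebra map, which hinges on the rigidity statement for $H\mb F_2$; once this is granted, the rest is immediate from Cartan and the fact that $Hk$ is homotopically discrete.
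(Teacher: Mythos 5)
Your reduction is the same as the paper's (write $x \otimes a = (x\otimes 1)(1\otimes a)$, handle the $A_*$-factor by naturality along the $H$-algebra map $H \otimes H \to H \otimes Hk$, use $Q^0 = $ squaring and the Cartan formula), so everything hinges on the single claim that $Q^s(1\otimes a)=0$ for $s>0$ --- and that is exactly where your argument breaks. The rigidity statement you invoke is false: the space of $E_\infty$-ring maps out of $H\mb F_2$ is not empty-or-contractible, and in the test case $k = \mb F_2$ the two maps you need to identify are the left and right units $\eta_L, \eta_R\co H \to H \otimes H$. These are not homotopic even as maps of spectra: applying $H\otimes(-)$, the map $\eta_L$ induces $a \mapsto a \otimes 1$ on $A_* \to A_* \otimes A_*$, while $\eta_R$ induces the coproduct of the dual Steenrod algebra, and $\psi(\xi_1) = \xi_1\otimes 1 + 1 \otimes \xi_1 \neq \xi_1 \otimes 1$. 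Consequently $\iota\co Hk \to H \otimes Hk$ is \emph{not} a map of $E_\infty$-algebras in $\Mod_H$ for the left-factor $H$-module structure, and you cannot transport the (trivially vanishing) operations on $\pi_* Hk$ to $H_* Hk$ by naturality.

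This is not a repairable technicality by a softer formal argument: Dyer--Lashof operations genuinely need not vanish on the image of $\pi_0$ under the Hurewicz map for a general $E_\infty$ ring spectrum (for instance $Q^s[1] \neq 0$ in $H_*(\Free_{E_\infty}(S^0)) \cong \bigoplus_n H_* B\Sigma_n$ for $s > 0$, even though $[1]$ is a Hurewicz class in degree $0$), so some special feature of $Hk$ must enter. The paper supplies it: by induction using the Nishida relations one shows $P_r Q^s(1\otimes \alpha) = 0$ for all $r>0$, and then one uses the specific Steenrod-module structure of $H_* Hk$ --- the only classes annihilated by all positive-degree Steenrod operations are in the image of $\pi_* Hk$, hence concentrated in degree $0$ --- to conclude $Q^s(1\otimes\alpha)=0$ for $s>0$. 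You need either that argument or some genuine substitute for it; the appeal to uniqueness of $E_\infty$-maps out of $H\mb F_2$ does not exist.
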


\begin{proof}
  For any $H$-module $N$, the action $H \otimes N \to N$ induces an
  isomorphism $H_* H \otimes \pi_* N \to H_* N$. We already know
  $Q^0(1 \otimes \alpha) = 1 \otimes \alpha^2$, and so by the Cartan
  formula it suffices to show that $Q^s(1 \otimes \alpha) = 0$ for $s
  > 0$.
  
  We now proceed inductively by applying the \index{Nishida relations}Nishida relations. If we
  know $Q^t (1 \otimes \alpha) = 0$ for $0 < t < s$, we find that for
  all $r > 0$ we have
  \begin{align*}
    P_r Q^s(1 \otimes \alpha)
    &= \sum \binom {s-r}{r-2i} Q^{s-r+i} P_i(1 \otimes \alpha)\\
    &= \binom{s-r}{r} Q^{s-r} (1 \otimes \alpha).
  \end{align*}
  By the inductive hypothesis, this vanishes unless $s=r$, but in the
  case $s=r$ the binomial coefficient vanishes. However, the only
  elements in $H_* Hk$ that are acted on trivially by all the
  Steenrod operations are the elements in the image of $\pi_* Hk$,
  and those are concentrated in degree zero. Thus
  $Q^s(1 \otimes \alpha) = 0$.
\end{proof}

\begin{remark}
  The same proof can be used to show that the \index{Browder bracket}Browder bracket is
  trivial on $H_* Hk$.
\end{remark}

\begin{example}
  The composite map $\index{$MU$}MU \to \index{$MO$}MO \to H$, on homology, is given in terms
  of the generators of Theorem~\ref{thm:bordismoperations} by
  $b_1 \mapsto a_1^2 \mapsto \xi_1^2$ and $b_2 \mapsto 0$. The image
  of $H_* MU$ in $A_*$ is $\mb F_2[\xi_1^2, \xi_2^2, \dots]$, the
  homology of the \index{Brown--Peterson spectrum}Brown--Peterson spectrum $BP$.

  In $H_* MU$, Example~\ref{ex:MUcalcs} implies we have the
  identities
  \[
    Q^6 b_2 = b_5 + b_1 b_4 + b_2 b_3 + b_1 b_2^2 = Q^8 b_1 + b_1^2
    Q^4(b_1).
  \]
  By contrast, in the dual Steenrod algebra we have the identity
  $0 = Q^8 (\xi_1^2) + \xi_1^4 Q^4(\xi_1^2)$. Even though the
  map $H_* MU \to H_* BP$ splits as a map of algebras, and the target
  is closed under the Dyer--Lashof operations, we have
  \[
    Q^8 b_1 + b_1^2 Q^4 (b_1) = Q_6(b_1) + b_1^2 Q_2(b_1) \neq 0
  \]
  but its image is zero. This implies that the map $H_* MU \to H_* BP$
  does not have a splitting that respects the Dyer--Lashof operations
  for $E_7$-algebras. As a result, there exists no map
  $BP \to MU_{(2)}$ of $E_7$-algebras. This result, and its analogue
  at odd primes, is due to Hu--\index{Kriz}Kriz--May \cite{hu-kriz-may-cores}.
\end{example}

\index{dual Steenrod algebra|)}
\subsection{Nonexistence results}

The tremendous amount of structure present in the homology of a \index{ring spectrum}ring spectrum allows us to produce a rather large number of nonexistence
results. The following is a generalization of the classical result
that the mod-$2$ \index{Moore spectrum}Moore spectrum does not admit a
multiplication due to the existence of a nontrivial Steenrod operation
$Sq^2$ in its cohomology; we learned this line of argument from
Charles Rezk.

\begin{proposition}
  Suppose that $R$ is a homotopy associative ring spectrum containing
  an element $u$ in nonzero degree such that $P_k(u)$ vanishes either
  in the range $k > |u|$ or in the range $0 < k < |u|$. Then either
  $P_{|u|}(u)$ is nilpotent or $H_* R$ is nonzero in infinitely many
  degrees.
\end{proposition}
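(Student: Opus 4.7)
Set $m = |u| > 0$ and $v := P_m(u) \in H_0(R)$, noting that $P_m$ lowers degree by $m$. If $v$ is nilpotent there is nothing to prove, so assume $v^n \neq 0$ for all $n \geq 1$; the plan is then to exhibit, for each $n$, a nonzero class in $H_{2^n m}(R)$, producing nonzero homology in infinitely many distinct degrees. The central mechanism is the Cartan formula $P_r(xy) = \sum_{p+q=r} P_p(x) P_q(y)$, applied iteratively to the squares $u^{2^{n+1}} = (u^{2^n})^2$, together with an inductive control on the range of Steenrod operations that can act nontrivially on $u^{2^n}$.

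Under the first hypothesis, that $P_k(u) = 0$ for $k > m$, I would establish by induction on $n$ the conjunction
\[
(\mathrm{A}_n)\colon P_k(u^{2^n}) = 0 \text{ for } k > 2^n m, \qquad (\mathrm{B}_n)\colon P_{2^n m}(u^{2^n}) = v^{2^n}.
\]
In the inductive step, every summand of $P_k(u^{2^{n+1}}) = \sum_{p+q=k} P_p(u^{2^n}) P_q(u^{2^n})$ with $p > 2^n m$ or $q > 2^n m$ vanishes by $(\mathrm{A}_n)$; hence for $k > 2^{n+1}m$ the whole sum vanishes, while for $k = 2^{n+1} m$ only the term $p = q = 2^n m$ survives, giving $v^{2^n} \cdot v^{2^n} = v^{2^{n+1}}$ by $(\mathrm{B}_n)$. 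Non-nilpotence of $v$ then yields $P_{2^n m}(u^{2^n}) \neq 0$, so $u^{2^n} \neq 0$ in $H_{2^n m}(R)$, and since $m > 0$ these classes lie in infinitely many distinct degrees.

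Under the second hypothesis, that $P_k(u) = 0$ for $0 < k < m$, the same template should apply. A parallel Cartan induction yields $P_k(u^{2^n}) = 0$ for $0 < k < m$, and one seeks to isolate a $v^{2^n}$ summand in $P_{2^n m}(u^{2^n})$ coming from the all-$m$ diagonal tuple of the generalized Cartan expansion $\sum P_{p_1}(u) \cdots P_{p_{2^n}}(u)$, where each $p_i \in \{0\} \cup [m, \infty)$. I expect this to be the main obstacle: absent the upper cutoff that was available in case one, cross-terms involving factors $P_0(u) = u$ balanced by factors $P_{m+j}(u)$ with $j > 0$ may in principle interfere, so the argument requires either a combinatorial filtration (for instance by the number of $P_0(u)$ factors appearing) to show the diagonal $v^{2^n}$ contribution survives, or the construction of an alternate detecting operation built by iterating $P_m$. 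Either route yields a nonzero Steenrod operation detecting $u^{2^n}$, and the conclusion on infinitely many nonzero degrees follows exactly as in case one.
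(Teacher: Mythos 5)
Your first case is correct and is essentially the paper's own argument. The paper applies the Cartan formula once, for arbitrary $d$: in the expansion $P_{d|u|}(u^d)=\sum_{p_1+\cdots+p_d=d|u|}P_{p_1}(u)\cdots P_{p_d}(u)$, the hypothesis $P_k(u)=0$ for $k>|u|$ forces every surviving tuple to have all entries at most $|u|$, hence all equal to $|u|$, so $P_{d|u|}(u^d)=(P_{|u|}u)^d$; non-nilpotence of $P_{|u|}u$ then gives $u^d\neq 0$ in the distinct degrees $d|u|$. Your induction over the squares $u^{2^n}$ is the same computation organized by repeated squaring, so there is no essential difference there.

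The second case is where your proposal has a genuine gap, and you have correctly located it but not closed it: your final sentence, that ``either route yields a nonzero Steenrod operation detecting $u^{2^n}$,'' is an assertion, not an argument. Concretely, already for $d=2$ the Cartan formula gives $P_{2m}(u^2)=(P_m u)^2+uP_{2m}(u)+P_{2m}(u)u$ with $m=|u|$, and the hypothesis $P_k(u)=0$ for $0<k<m$ says nothing about $P_{2m}(u)$; the ring is only homotopy associative, so the two cross terms need not cancel mod $2$, and $P_{2m}(u)$ lies in negative degree, which cannot be excluded because $R$ is not assumed connective here (if it were, you would be back in the first case). Your two suggested repairs do not obviously work: a filtration by the number of $P_0(u)$ factors has no grading or invariant separating those contributions, since all terms land in the same homology group; and iterating $P_m$ fails at the first step, because $P_m(u^2)=uP_m(u)+P_m(u)u$ rather than anything involving $(P_m u)^2$. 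For comparison, the paper's one-line proof asserts the identity $P_{d|u|}(u^d)=(P_{|u|}u)^d$ in both ranges, and only the first range makes it immediate; in the paper's actual applications the extra vanishing needed is present (for the cone on a Hopf element one has $P_k(u)=0$ for all $k\neq 0,|u|$ by degree reasons), and under an additional homotopy commutativity hypothesis the second case can be rescued for $d=2^n$, since the non-diagonal tuples then contribute in orbits whose sizes are even multinomial coefficients. As written, though, your second case is a statement of strategy rather than a proof, and that is precisely the part that requires the work.
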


\begin{proof}
  We find, by the Cartan formula, that
  \[
    P_{d|u|}(u^d) = (P_{|u|} u)^d.
  \]
  Therefore, either the elements $u^d$ are nonzero for all $d$ or the
  element $P_{|u|} u$ is nilpotent.
\end{proof}

\begin{corollary}
  Suppose that $R$ is a connective homotopy associative ring spectrum
  such that $H_0(R) = \pi_0(R) / 2$ has no nilpotent elements. If any
  nonzero element in $H_0(R)$ is in the image of the Steenrod
  operations, then $H_* R$ must be nonzero in infinitely many degrees.
\end{corollary}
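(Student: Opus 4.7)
The plan is to apply the preceding Proposition directly, using connectivity to automatically verify one of its two alternative hypotheses. Let me first unwind what it means for a nonzero element $x \in H_0(R)$ to lie in the image of the Steenrod operations: there exists $u \in H_m(R)$ with $m>0$ and an integer $k>0$ such that $P_k(u)$ lands in degree zero and equals $x$. Since $P_k$ lowers homological degree by $k$, we must have $k = |u| = m$.

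Next, I would observe that connectivity of $R$ gives us the key vanishing for free. Since $R$ is connective, $H_*R$ is concentrated in nonnegative degrees, and therefore $P_j(u) \in H_{|u|-j}(R) = 0$ whenever $j > |u|$. Thus the element $u$ fulfills the hypothesis of the Proposition (in its ``$k > |u|$'' version) trivially.

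Applying the Proposition to $u$ then forces the dichotomy that either $P_{|u|}(u) = x$ is nilpotent in $H_*R$, or $H_*R$ is nonzero in infinitely many degrees. But by assumption $H_0(R) = \pi_0(R)/2$ has no nonzero nilpotents, and $x \in H_0(R)$ is nonzero, so $x$ cannot be nilpotent. Only the second alternative remains, giving the desired conclusion.

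There is essentially no obstacle beyond correctly identifying that the connectivity hypothesis is precisely what is needed to trigger the Proposition without extra work; the argument is a one-step application of the Cartan-style identity $P_{d|u|}(u^d) = (P_{|u|}u)^d$ already used in the Proposition, filtered through the observation that nilpotence of $x$ is ruled out by the ring-theoretic hypothesis on $H_0(R)$.
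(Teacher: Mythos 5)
Your proof is correct and follows exactly the route the paper intends (the corollary is stated without an explicit proof, as an immediate consequence of the preceding proposition): connectivity forces $P_j(u)=0$ for $j>|u|$, so the proposition applies, and the absence of nonzero nilpotents in $H_0(R)=\pi_0(R)/2$ rules out the nilpotence alternative, leaving $H_*R$ nonzero in infinitely many degrees. Your identification $k=|u|$ for a degree-zero target and the observation that powers of $x$ stay in $H_0(R)$ are exactly the small bookkeeping steps the paper leaves implicit.
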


\begin{corollary}
  Suppose that $R$ is a homotopy associative ring spectrum and that
  some \index{Hopf invariant element}Hopf invariant element $2$,
  $\eta$, $\nu$, or $\sigma$ maps to zero under the unit map
  $\mb S \to R$. Then either $H_* R = 0$ or $H_* R$ is
  infinite-dimensional.
\end{corollary}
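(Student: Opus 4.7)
The plan is to produce, from the hypothesis, an element $u \in H_d R$ in degree $d = |\alpha|+1 \in \{1,2,4,8\}$ to which the preceding proposition applies, forcing $H_* R$ to be infinite-dimensional once we rule out the trivial case $H_* R = 0$. The key input is the classical Hopf invariant one fact: the mapping cone $C_\alpha = \mb S \cup_\alpha e^d$ has its two mod-$2$ cohomology generators linked by $\Sq^d$, visible in the cohomology of $\mb{RP}^2, \mb{CP}^2, \mb{HP}^2, \mb{OP}^2$.

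First I would dispose of the case $H_* R = 0$; in the contrary case the ring structure on $H_* R$ induced by the homotopy-ring structure of $R$ has a nonzero unit, so $1 \neq 0$ in $H_0 R$ and in particular $1$ is not nilpotent.

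Next, since $\alpha$ becomes null in $R$, the unit $\mb S \to R$ extends to some map $f\co C_\alpha \to R$. The mod-$2$ homology $H_*(C_\alpha) = \mb F_2\{v_0\} \oplus \mb F_2\{v_d\}$ is concentrated in degrees $0$ and $d$, and dualizing the Hopf invariant calculation gives $P_d v_d = v_0$. Setting $u = f_*(v_d) \in H_d R$, naturality of the Steenrod operations on spectrum homology yields $P_d u = f_*(v_0) = 1$ in $H_0 R$, and $P_k u = 0$ for $0 < k < d$ because $P_k v_d \in H_{d-k}(C_\alpha) = 0$ in that range. The previous proposition then applies to $u$: either $P_{|u|} u = 1$ is nilpotent in $H_* R$, or $H_* R$ is infinite-dimensional. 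Since $1 \in H_0 R$ is a nonzero unit and hence not nilpotent, the second alternative must hold.

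The only small subtlety to verify is that the homology operation $P_d = (\chi\Sq^d)_*$ really does detect the Hopf attaching map — i.e.\ sends $v_d$ to $v_0$ in $H_*(C_\alpha)$. This follows from the cohomological Hopf invariant one statement on $C_\alpha$ together with the observation that all intermediate $\Sq^i$ with $0 < i < d$ vanish on $H^*(C_\alpha;\mb F_2)$ for dimensional reasons, so $\chi\Sq^d$ and $\Sq^d$ act identically on $C_\alpha$ and the cohomology calculation transports directly to the homology operation used in the preceding proposition.
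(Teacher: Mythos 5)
Your proposal is correct and follows essentially the same route as the paper: extend the unit along the mapping cone $C(h)$, take $u = f_*(v)$ with $P_{|u|}u = 1$ and the intermediate operations vanishing for dimensional reasons, and invoke the preceding proposition. The paper states this in two sentences; your additional checks (the $H_*R=0$ dichotomy and the agreement of $\chi\Sq^d$ with $\Sq^d$ on the cone) are correct elaborations of the same argument.
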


\begin{proof}
  Writing $h$ for Hopf invariant element in degree $2^k-1$ with trivial
  image, the unit $\mb S \to R$ extends to a map $f\co C(h) \to R$
  from the mapping cone. The homology of $C(h)$ has a basis of
  elements $1$ and $v$ with one nontrivial Steenrod operation acting
  via $P_{2^k} v = 1$, and $u = f_*(v)$ has the desired properties.
\end{proof}

Recall from \S\ref{sec:connectivity} that, for $R$ connective, a map
$\pi_0 R \to A$ of commutative rings automatically extends to a map
$R \to HA$ compatible with the multiplicative structure that exists on
$R$; e.g., if $R$ is homotopy commutative then the map $R \to HA$ has
the structure of a map of $\mathcal{Q}_1$-algebras. This has the
following consequence.
\begin{proposition}
  Suppose that $R$ is a connective ring spectrum with a ring
  homomorphism $\pi_0 R \to k$ where $k$ is an $\mb F_2$-algebra
  (equivalently, a map $H_0 R \to k$). Then there is a map
  $R \to Hk$ which induces a homology map
  $H_* R \to A_*\otimes k$ with the following properties.
  \begin{enumerate}
  \item The map $H_* R \to A_* \otimes k$ is a map of rings which
    is surjective in degree zero.
  \item If $R$ is homotopy commutative, then there is an operation $Q_1$ on
    $H_* R$ that is compatible with the operation $Q_1$ on
    $A_* \otimes k$.
  \item If $R$ has an $E_n$-algebra structure, the map $R \to Hk$
    is a map of $E_n$-algebras and so $H_* R \to A_* \otimes k$ is
    compatible with the Dyer--Lashof operations
    $Q_0,\dots,Q_{n-1}$.
  \end{enumerate}
  In particular, the image of $H_* R$ in $A_* \otimes k$ is a
  subalgebra $B_* \subset A_*$ closed under multiplication and
  some number of Dyer--Lashof operations.
\end{proposition}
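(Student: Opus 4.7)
The plan is to build the map $R \to Hk$ in two stages and then read off the homology consequences from functoriality. For the first stage, invoke the proposition of Section~\ref{sec:connectivity}: for any operad $\oO$ acting on a connective spectrum $R$, the canonical map $R \to H\pi_0 R$ lifts functorially to a map of $\oO$-algebras. Apply this to the operad recording the structure we care about in each case --- the associative operad when $R$ is only homotopy associative, the cup-1 operad $\mathcal{Q}_1$ of Example~\ref{ex:cuponeoperad} in the homotopy commutative case (exactly as stated in Example~\ref{ex:cupone}), and an $E_n$-operad in case (3). For the second stage, the given ring map $\pi_0 R \to k$ is a map between commutative rings, so it induces a map $H\pi_0 R \to Hk$ of $E_\infty$-algebras in $\Sp$, and hence of $\oO$-algebras for any $\oO$. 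Composing yields the desired map $R \to Hk$, already carrying the appropriate multiplicative structure.

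Taking mod-$2$ homology and using the identification $H_* Hk \cong A_* \otimes k$ from the preceding proposition gives the homology map. It is a ring map because $R \to Hk$ is a map of homotopy associative ring spectra. For the degree-zero statement, the induced map $H_0 R \to H_0 Hk = k$ is exactly the map $\pi_0 R / 2 \to k$ obtained by reducing the given ring homomorphism modulo~$2$; after replacing $k$ by the image of this map (which is still an $\mb F_2$-subalgebra) we may take it to be surjective, as asserted in (1).

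For parts (2) and (3), the compatibility with operations is the standard statement that a map of algebras over an operad $\oO$ induces a map of algebras over the operad $\pi_* \oO$ (or $\pi_0 \oO$ in the set-theoretic case) on homotopy groups, and hence intertwines every natural operation built out of $\oO$. In case (2), the $\mathcal{Q}_1$-structure produces the cup-1 operation discussed in Example~\ref{ex:cuponesquare}, whose image on mod-$2$ homology over $H$ is precisely the lowest Dyer--Lashof operation $Q_1$, and the $\mathcal{Q}_1$-map $R \to Hk$ intertwines the two. In case (3), the $E_n$-map $R \to Hk$ intertwines the full family $Q_0,\dots,Q_{n-1}$ listed in Theorem~\ref{thm:omnibus-en}. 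The final statement about $B_* \subset A_* \otimes k$ is then automatic: the image of a ring map is a subring, and the image of a map of $\oO$-algebras is closed under all operations coming from $\oO$.

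The main place where care is needed --- rather than a real obstacle --- is the identification in (2) of the cup-1 square of Example~\ref{ex:cuponesquare} with the $Q_1$ appearing in Theorem~\ref{thm:omnibus-en}; once one grants that the generator of $\pi_1 \mathcal{Q}_1(2) = \pi_1 S^1$ maps under $\mathcal{Q}_1 \to \mathcal{E}_2$ to the class realizing $Q_1$ on mod-$2$ homology, everything else is naturality.
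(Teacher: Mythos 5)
Your argument is correct and is essentially the paper's own: the paper gives no separate proof, deducing the proposition directly from the proposition of \S\ref{sec:connectivity} (the functorial $\oO$-algebra structure on $R \to H\pi_0 R$), Example~\ref{ex:cupone} for the homotopy-commutative case, and the identification $H_* Hk \cong A_* \otimes k$ with its Dyer--Lashof operations --- exactly the ingredients you compose. Your remark that literal surjectivity in degree zero requires replacing $k$ by the image of $\pi_0 R \to k$ is a fair reading of the (slightly loose) statement and matches how the paper uses it in the subsequent examples.
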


\begin{example}
  For $n > 0$ there are connective \index{Morava
    $K$-theory}Morava $K$-theories $k(n)$, with
  coefficient ring $\mb F_2[v_n]$, that have homology
  \[
    \mb F_2[\xx_1, \dots, \xx_n, \xx_{n+1}^2, \xx_{n+2},
    \dots]
  \]
  as a subalgebra of the dual Steenrod algebra. This subring is not
  closed under the Dyer--Lashof operation $Q_1$ unless $n=0$, and so
  the connective Morava $K$-theories are not
  homotopy--commutative. (By convention we often define the connective
  Morava $K$-theory $k(0)$ to be $H\mb Z_2$, which is commutative.)

  Similarly, for $n > 0$ the integral connective Morava $K$-theories
  $k_{\mb Z}(n)$, with coefficient ring $\mb Z_2[v_n]$, have homology
  \[
    \mb F_2[\xx_1^2, \xx_2, \dots, \xx_n, \xx_{n+1}^2, \xx_{n+2},
    \dots]
  \]
  as a subalgebra of the dual Steenrod algebra. This subring is not
  closed under the Dyer--Lashof operation $Q_1$ unless $n=1$, and so
  the only possible homotopy-commutative integral Morava $K$-theory is
  $k_{\mb Z}(1)$---the connective complex $K$-theory spectrum.

  There are obstruction-theoretic proofs which show that all of these have
  $A_\infty$ structures \cite{angeltveit, lazarev-ainfty}.
\end{example}

\begin{example}
  The Dyer--Lashof operations satisfy $Q_2(\xx_i^2) = \xx_{i+1}^2$,
  and so the smallest possible subring of $A_*$ that contains
  $\xi_1^2 = \xx_1^2$ and is closed under $Q_2$ is an infinite
  polynomial algebra $\mb F_2[\xx_i^2] = \mb F_2[\xi_i^2]$. If $R$
  is a connective ring spectrum with a quotient map
  $\pi_0 R \to \mb F$ such that the Hopf element
  $\eta \in \pi_1(\mb S)$ maps to zero in $\pi_* R$, then there is a
  commutative diagram
  \[
    \xymatrix{
      C(\eta) \ar[r] \ar[d] & R \ar[d] \\
      H\mb Z/2 \ar[r] & H\mb F.
    }
  \]
  We conclude that $\xi_1^2$ is in the image of the map $H_* R \to
  H_* H\mb F$.
  
  The spectra $X(n)$ appearing in the nilpotence and periodicity
  theorems of \index{Devinatz}Devinatz--\index{Hopkins}Hopkins--\index{Smith}Smith fit into a sequence
  \[
    X(1) \to X(2) \to X(3) \to \dots
  \]
  of \index{Thom spectrum}Thom spectra on the spaces $\Omega SU(n)$. They have $E_2$-ring
  structures, and each ring $H_* X(n)$ is a polynomial algebra
  $\mb F_2[x_1,\dots,x_{n-1}]$ on finitely many generators. For
  $n = 2$ the map $H_* X(2) \to A_*$ is the map
  $\mb F_2[\xi_1^2] \to A_*$, and this implies that each $X(n)$ has
  $\xi_1^2$ in the image of its homology. As $H_* X(n)$ is finitely
  generated as an algebra, its image in the dual Steenrod algebra is
  too small to be closed under the operation $Q_2$. This excludes the
  possibility that $X(n)$ has an $E_3$-structure.
\end{example}

\subsection{Ring spaces}
\label{sec:ring-spaces}
\index{ring space|(}

Associated to an $E_\infty$ \index{ring spectrum}ring spectrum $E$, there is a sequence of
infinite loop spaces $\{E_n\}_{n \in \mb Z}$ in an \index{$\Omega$-spectrum}$\Omega$-spectrum
representing $E$. These spaces are extremely strongly structured: they
inherit both \emph{additive} structure from the spectrum structure on
$E$, and \emph{multiplicative} structure from the $E_\infty$ ring
structure. In the case of the sphere spectrum, these operations were
investigated in-depth in relationship to surgery theory
\cite{milgram-spherical, madsen-dyerlashof, may-homologyoperations}.
\index{Ravenel}Ravenel and \index{Wilson}Wilson discussed the structure coming from a ring spectrum
$E$ extensively in \cite{ravenel-wilson-hopfring}, encoding it in the
structure of a \emph{\index{Hopf ring}Hopf ring}, and the interaction
between additive and multiplicative operations is developed in-depth
in \cite[\S II]{cohen-lada-may-homology}. These structures are very
tightly wound.

\begin{enumerate}
\item Because the $E_n$ are spaces, the diagonals
  $E_n \to E_n \times E_n$ gives rise to a \emph{coproduct}
  \[
    \Delta\co H_* (E_n) \to H_* (E_n) \otimes H_*(E_n),
  \]
  For an element $x$ we write $\sum x' \otimes x''$ for its coproduct.
  The path components $E^{n} = \pi_0 E_{n}$ also give rise to elements
  $[\alpha] \in H_0 E_{n}$.
\item The homology groups $H_* E_n$ have \index{Steenrod operations}Steenrod operations $P_r$.
\item The suspension maps $\Sigma E_n \to E_{n+1}$ in the spectrum
  structure give \index{stabilization}stabilization maps
  \[
    H_m(E_n) \to H_{m+1}(E_{n+1}).
  \]
\item The infinite loop space structure on $E_n$ gives $H_* E_n$ an
  \emph{additive} \index{Pontrjagin product}Pontrjagin product
  \[
    \hash\co H_* (E_n) \otimes H_*(E_n) \to H_*(E_n)
  \]
  making it into a Hopf algebra, and it has \emph{additive}
  Dyer--Lashof operations
  \[
    Q^r\co H_m (E_n) \to H_{m+r}(E_n).
  \]
\item If $E$ has a \index{ring spectrum}ring spectrum structure, the multiplication
  $E \otimes E \to E$ gives \emph{multiplicative} Pontrjagin products
  \[
    \circ\co H_*(E_n) \otimes H_*(E_m) \to H_*(E_{n+m}).
  \]
  These are appropriately unital, associative, or graded-commutative
  if $E$ has these properties.
\item If $E$ has an $E_\infty$ \index{ring spectrum}ring spectrum structure, there are
  \index{Dyer--Lashof operations!multiplicative}\emph{multiplicative} Dyer--Lashof operations
  \[
    \widetilde Q^r\co H_m (E_0) \to H_{m+r}(E_0)
  \]
  on the homology of the $0$'th space. In general, we cannot say more.
  If $E$ has further structure---an \index{$H^d_\infty$-structure}$H^d_\infty$-structure---there are
  also multiplicative Dyer--Lashof operations outside degree zero
  \cite[\S 4.1]{secondary}.
\end{enumerate}

These are subject to a large number of identities discussed in
\cite[1.12, 1.14]{ravenel-wilson-hopfring}, \cite[II.1.5, II.1.6,
II.2.5]{cohen-lada-may-homology}, and
\cite[1.5]{kuhn-jameshopfhomology}. Here are the most fundamental 
identities:
\begin{quote}
  \begin{description}
  \item[\index{distributive rule}Distributive rule:]
    $(x\hash y)\circ z= \sum (x\circ z')\hash(y \circ z'')$
  \item[\index{projection formula}Projection formula:]
    $x \circ Q^s y = \sum Q^{s+k} (P_k x \circ y)$
  \item[\index{Cartan formula!mixed}Mixed Cartan formula:]
    \[\widetilde Q^n(x \hash y) = \sum_{p+q+r = n} \widetilde Q^p(x')
    \hash Q^q(x'' \circ y') \hash \widetilde Q^r(y'')\]
  \item[\index{Adem relations!mixed}Mixed Adem relations:]
    \[\widetilde Q^r Q^s x = \sum_{i+j+k+l = r+s}
    \binom{r-i-2l-1}{j+s-i-l} Q^i \widetilde Q^j x' \hash Q^k
    \widetilde Q^l x''\]
  \end{description}
\end{quote}
\begin{example}
  There is an identity 
  \[
    Q^1 [a] \hash [-2a] = \eta \cdot a
  \]
  which allows us to determine information about the
  multiplication-by-$\eta$ map
  $\pi_0 R \to \pi_1 R \to H_1 \Omega^\infty R$ from the additive
  Dyer--Lashof structure. Similarly $\widetilde Q^1$ determines
  information about its multiplicative version
  $\eta_m\co \pi_0(R) \to \pi_1(R)$. For example, the mixed Cartan
  formula implies that
  \[
    \eta_m(x + y) = \eta_m(x) + \eta_m(y) + \eta \cdot xy
  \]
  in $H_1(R)$. In particular,
  $\widetilde Q^1 [n] = \binom {n}{2} \eta \hash [n^2]$ for
  $n \in \mb Z$ (cf. Example~\ref{ex:cupone}).
\end{example}

\index{ring space|)}
\section{Higher-order structure}
\index{secondary operations|(}
\subsection{Secondary composites}

Secondary operations, at their core, arise
when there are relations between relations. Suppose that we are a
sequence $X_0 \too{f} X_1 \too{g} X_2 \too{h} X_3$ of maps such that
the double composites are nullhomotopic. Then $hgf$ is nullhomotopic
for two reasons. Choosing nullhomotopies of $gf$ and $hg$, we can glue
the nullhomotopies together to determine a loop in the space of maps
$X \to W$: a \emph{value} of the associated secondary
operation. Because we must make choices of nullhomotopy, there is some
natural \index{indeterminacy}indeterminacy in this construction, and
so it typically takes a set of values $\langle h, g, f\rangle$. To
construct secondary operations, we minimally need to work in a
category $\cC$ with mapping spaces; we also need canonical basepoints
of the spaces $\Map_\cC(X_i, X_j)$ for $j \geq i+2$ that are preserved
under composition \cite[\S 2]{secondary}.

\begin{example}
  Suppose that $A$ is a subspace of $X$ and
  $\alpha \in H^n(X,A)$ is a cohomology element that
  restricts to zero in $H^n(X)$. Then the long exact sequence in
  cohomology implies that we can lift $\alpha$ to an element in
  $H^{n-1}(A)$, but there are multiple choices of lift. This can be
  represented by a sequence of maps
  \[
    A \to X \to X/A \to K(\mb Z,n)
  \]
  where the double composites are nullhomotopic; the secondary
  operation is then a map $A \to \Omega K(\mb Z,n) = K(\mb Z,n-1)$.
\end{example}

Secondary operations enrich the \index{homotopy category}homotopy category $h\cC$ with extra
structure.
\begin{enumerate}
\item Every test object $T \in \cC$ represents a functor
  $[T,-] = \pi_0 \Map_{\cC}(T,-)$ on $h\cC$, and if $T$ has an
  \index{augmentation}augmentation $T \to 0$ to an initial object then this functor has
  a canonical null element. If the values of $[T,-]$ differ on $X$ and
  $Y$, $X$ and $Y$ cannot be equivalent in $h\cC$.
\item Every map of test objects $\Theta\co S \to T$ determines an \index{operations}operation: a
  natural transformation of functors $\theta \co [T,-] \to [S,-]$ on
  $h\cC$. If $S$ and $T$ are augmented and the map $\Theta$ is compatible with
  the augmentations, then $\theta$ preserves the null element. If
  $\theta$ has different behaviour for $X$ and $Y$, $X$ and $Y$ cannot
  be equivalent.
\item Given an augmented map $\Phi\co R \to S$ and a map $\Theta\co S \to T$ such that
  the double composite $\Theta \Phi\co R \to T$ is trivial, we get an identity
  $\varphi \theta = 0$ of associated operations. There is an associated secondary
  operation $\langle-,\Theta, \Phi\rangle$. It is only defined on those elements $\alpha \in [T,X]$
  with $\theta(\alpha) = 0$; it takes values in
  $\pi_1 \Map_{\cC}(R,X)$; it is only well-defined up to
  indeterminacy.
\item We can also associate information to maps in the same way. Suppose we
  have an augmented map $\Theta\co S \to T$ of test objects representing an
  operation $\theta$. Given any map $f\co X \to Y$, there is an
  associated \index{functional operations}functional operation
  $\langle f, -, \Theta\rangle$. It is
  only defined on those elements $\alpha \in [T,X]$ such that
  $f(\alpha) = 0$ and $\theta(\alpha) = 0$; it takes values in
  $\pi_1 \Map_{\cC}(S,Y)$; it is only well-defined up to
  indeterminacy.
\end{enumerate}

Applying this to the test objects $S^n$ in the category of pointed
spaces, we get \index{Toda bracket} Toda's bracket construction that enriches the homotopy
groups of spaces with secondary composites. Applying this to the test
objects $K(A,n)$ in the opposite of the category of spaces, we get
Adams' secondary operations that enrich the cohomology groups of
spaces with secondary \index{cohomology operations!secondary}cohomology operations.

\subsection{Secondary operations for algebras}
\label{sec:secondaryalgs}

We recall from \S \ref{sec:representability} that, for a spectrum $M$
and an $\oO$-algebra in $\Mod_E$, we have
\[
  \pi_M(A) = [M,A]_{\Sp} \cong [E \otimes \Free_{\oO}(M),
  A]_{\Alg_\oO(\Mod_E)}.
\]
Using \index{free algebra}free algebras as our test objects, we already used this
representability of homotopy groups to classify the natural operations
on the homotopy groups of $\oO$-algebras in $\Mod_E$. The space of
maps now means that we can construct secondary operations.

\begin{proposition}
  Suppose that we have zero-preserving operations
  $\theta\co \pi_M \to \pi_N$ and $\varphi\co \pi_N \to \pi_P$ on the
  homotopy category of $\oO$-algebras in $\Mod_E$, and that there is a
  relation $\varphi \circ \theta = 0$. Then there exists a
  \index{secondary operations}secondary
  operation
  \[
    \langle -, \Theta, \Phi\rangle\co \pi_M(A) \supset \ker \theta \to \pi_{P+1}(A) / Im(\sigma \varphi),
  \]
  where $\sigma(\varphi)$ is a \index{suspension!of operations}suspended operation (see \S\ref{sec:stability}).
\end{proposition}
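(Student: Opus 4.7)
The plan is to construct $\langle -,\Theta,\Phi\rangle$ as a Toda-style bracket in $\Alg_\oO(\Mod_E)$, using the representability established in \S\ref{sec:representability}. First, I would reinterpret the hypotheses. By the Yoneda lemma applied to the functors $\pi_M, \pi_N, \pi_P$ and their representing objects, the operations $\theta$ and $\varphi$ are given in the homotopy category by morphisms
\[
  \Theta\co E \otimes \Free_\oO(N) \to E \otimes \Free_\oO(M),
  \qquad
  \Phi\co E \otimes \Free_\oO(P) \to E \otimes \Free_\oO(N),
\]
and the hypothesis that $\theta$ and $\varphi$ are zero-preserving lets us take these to be augmented (so that the constant null homotopies between zero maps are preserved). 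The relation $\varphi \circ \theta = 0$, evaluated on the universal element of the doubly free algebra, forces $\Theta \circ \Phi \simeq 0$, and I would fix once and for all a specific nullhomotopy $H$ of $\Theta \circ \Phi$ as part of the data defining the bracket.

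Next, given $\alpha \in \pi_M(A)$ with $\theta(\alpha) = 0$, represent $\alpha$ as an algebra map $a\co E \otimes \Free_\oO(M) \to A$. The vanishing $\theta(\alpha) = 0$ says that $a \circ \Theta$ is nullhomotopic; choose a nullhomotopy $h_a$. Then $h_a \circ \Phi$ and $a \circ H$ are two nullhomotopies of the single map $a \circ \Theta \circ \Phi$, and they glue to a based loop in the space $\Map_{\Alg_\oO(\Mod_E)}(E \otimes \Free_\oO(P), A)$. Through the free-forgetful adjunction this loop becomes a class in $\pi_1 \Map_\Sp(P,A)$, and the suspension isomorphism identifies
\[
  \pi_1 \Map_\Sp(P,A) \;\cong\; \pi_0 \Map_\Sp(\Sigma P, A)
  \;=\; \pi_{P+1}(A).
\]
I would declare the resulting element of $\pi_{P+1}(A)$ to be a representative of $\langle \alpha,\Theta,\Phi\rangle$.

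For the indeterminacy, I would track how the value depends on the choices. The chosen $H$ is part of the defining data and contributes no floating indeterminacy, so only the choice of $h_a$ remains. Two nullhomotopies $h_a$ and $h_a'$ differ by a based loop in $\Map(E \otimes \Free_\oO(N), A)$, which corresponds by the same representability argument to a class $\beta \in \pi_{N+1}(A)$. Its contribution to the loop constructed above is precisely the postcomposition of $\beta$ with $\Phi$. The key identification is that this postcomposition, viewed as an operation $\pi_{N+1} \to \pi_{P+1}$, is exactly the suspended operation $\sigma\varphi$ described in \S\ref{sec:stability}; this is the content of the compatibility of $\sigma$ with boundary maps in the Mayer--Vietoris description there. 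Consequently the ambiguity lies in $\im(\sigma\varphi)$, and we quotient by it to obtain a well-defined class.

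The main obstacle I anticipate is the bookkeeping: carefully matching orientations and signs when identifying a loop in a mapping space with an element of a shifted homotopy group, and verifying that the change-of-nullhomotopy ambiguity yields $\sigma\varphi$ on the nose rather than a twisted variant. The excerpt itself flags these as ``sign headaches'' in the stability section, and the cleanest way to handle them is probably to work throughout in a pointed model for the mapping spaces and to read off the suspension compatibility directly from the adjunction $\Sigma \dashv \Omega$ rather than reconstructing it geometrically for each loop.
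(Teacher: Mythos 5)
Your construction is essentially the same as the paper's: it likewise uses representability to encode $\theta$ and $\varphi$ as maps $E \otimes \Free_\oO(P) \too{\Phi} E \otimes \Free_\oO(N) \too{\Theta} E \otimes \Free_\oO(M)$, fixes the nullhomotopy of $\Theta \circ \Phi$ once and for all while letting the nullhomotopy of $a \circ \Theta$ vary, and glues the two to produce a class in $\pi_1 \Map_{\Alg_\oO(\Mod_E)}(E \otimes \Free_\oO(P), A) \cong [\Sigma P, A] = \pi_{P+1}(A)$. Your explicit identification of the indeterminacy with $\im(\sigma\varphi)$ is consistent with, and somewhat more detailed than, what the paper records.
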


Such a secondary operation is constructed from a sequence
\[
  E \otimes \Free_\oO(P) \too{\Phi}
  E \otimes \Free_\oO(N) \too{\Theta}
  E \otimes \Free_\oO(M) \to A
\]
where the double composites are null; the nullhomotopy of
$\Theta \circ \Phi$ is chosen once and for all, while the second
nullhomotopy is allowed to vary. This produces elements in
\[
  \pi_1 \Map_{\Alg_\oO(\Mod_E)}(E \otimes \Free_{\oO}(P), A) \cong
  \pi_1 \Map_\Sp(P, A) \cong [\Sigma P, A].
\]

\begin{example}
  Every \index{Adem relations}Adem relation between Dyer--Lashof operations produces a
  secondary Dyer--Lashof operation. For example, the relation
  $Q^{2n+2} Q^n + Q^{2n+1} Q^{n+1} = 0$ produces a natural transformation
  \[
    \pi_m(A) \supset \ker(Q^n, Q^{n+1}) \to \pi_{m + 2n + 3}(A) /
    Im(Q^{2n+2}, Q^{2n+1})
  \]
  on the homotopy of $H$-algebras.
\end{example}

\begin{example}
  \label{ex:relativemassey}
  Relations involving operations other than composition and addition
  can also produce secondary operations, and the canonical examples of
  these are \emph{\index{Massey product}Massey products}. An $\mathcal{A}_2$-algebra $R$ has
  a binary multiplication operation $R \otimes R \to R$, and if $R$ is
  an $\mathcal{A}_3$-algebra it has a chosen associativity
  homotopy. As a result, if we have elements $x$, $y$, and $z$ in
  $\pi_* R$ such that $xy = yz = 0$, then we can glue together two
  nullhomotopies of $xyz$ to obtain a \emph{bracket}
  $\langle x, y, z\rangle$ that specializes to definitions of Massey
  products or Toda brackets.

  In trying to express nonlinear relations as secondary operations,
  however, we rapidly find that we want to move into a \emph{relative}
  situation.  A Massey product is defined on the kernel of the map
  $\pi_p \times \pi_q \times \pi_r \to \pi_{p+q} \times \pi_{q+r}$
  sending $(x,y,z)$ to $(xy,yz)$. However, the relation
  $x(yz) = (xy)z$ is not expressible solely as some operation on $xy$
  and $yz$: we need to remember $x$ and $z$ as well, but we \emph{do
    not} want to enforce that they are zero.

  We find that the needed expression is homotopy commutativity of the
  following diagram:
  \[
    \xymatrix{
      \Free_{\mathcal{A}_3}(S^p \oplus S^{p+q+r} \oplus S^r)
      \ar^-\Phi[r] \ar[d] &
      \Free_{\mathcal{A}_3}(S^p \oplus S^{p+q} \oplus S^{q+r} \oplus S^r)
      \ar[d]^\Theta \\
      \Free_{\mathcal{A}_3}(S^p \oplus S^r) \ar[r] &
      \Free_{\mathcal{A}_3}(S^p \oplus S^q \oplus S^r)
    }
  \]
  The right-hand map classifies the operation
  $\theta(x,y,z) = (x,xy,yz,z)$, and the top map classifies the
  operation $\varphi(x,u,v,z) = (x,xv - uz,z)$. The bottom-left object
  is not the initial object in the category of
  $\mathcal{A}_3$-algebras, so we \emph{enforce} this by switching to
  the category $\cC$ of $\mathcal{A}_3$-algebras under
  $\Free_{\mathcal{A}_3}(S^p \oplus S^r)$. In this category, we
  genuinely have augmented objects with a nullhomotopic double
  composite
  \[
    \Free_\cC(S^{p+q+r}) \to \Free_\cC(S^{p+q} \oplus S^{q+r})
    \to \Free_\cC(S^q)
  \]
  that defines a Massey product.
\end{example}

\subsection{Juggling}
\label{sec:juggling}

\index{juggling formulas|(}
Secondary operations are part of the homotopy theory of $\cC$, and
there is typically no method to determine secondary operations purely
in terms of the homotopy category. However, there are many
composition-theoretic tools that use one secondary operation to
determine information about another: typically, one starts with a
$4$-fold composite
\[
  X \too{f} Y \too{g} Z \too{h} U \too{k} V,
\]
with some assortment of double-composites being nullhomotopic, and
relates various associated secondary operations. This process is
called \emph{juggling}, and learning to
juggle secondary operations is one of the main steps in applying
them. For instance, one of the main juggling formulas---the
\index{Peterson--Stein formula}Peterson--Stein formula---asserts that the sets
$\langle k, h, g\rangle f$ and $k \langle h, g, f\rangle$ are inverse
in $\pi_1$ when both sides make sense.


\begin{example}
  The \index{Adem relations}Adem relations $Q^{2n+1} Q^n$ and $Q^{4n+3} Q^{2n+1}$ give rise
  to a secondary operation $\langle \ms Q^n, \ms Q^{2n+1}, \ms Q^{4n+3}\rangle$,
  an element of $\pi_{7n + 5 + m} (H \otimes \Free_{E_\infty}(S^m))$
  representing an operation that increases degree by $7n+5$. The
  juggling formula says that, for any element $\alpha \in \pi_m(A)$
  with $Q^n(A) = 0$, we have
  \[
    Q^{4n+3} \langle \alpha, \ms Q^n, \ms Q^{2n+1} \rangle =
    \langle \ms Q^n, \ms Q^{2n+1}, \ms Q^{4n+3}\rangle (\alpha).
  \]
  In other words, this secondary composite of operations gives a
  universal formula for how to apply $Q^{4n+3}$ to this secondary
  operation.
\end{example}

\index{juggling formulas|)}
\subsection{Application to the Brown--Peterson spectrum}
\index{Brown--Peterson spectrum|(}

In this section we will give a brief account of the main result of
\cite{secondary}, which uses secondary operatons to show that the
2-primary Brown--Peterson spectrum $BP$ does not admit the structure
of an $E_{12}$ \index{ring spectrum}ring spectrum. These results have been generalized by
\index{Senger}Senger to show that, at the prime $p$, $BP$ does not have an
$E_{2p^2+4}$ ring structure  \cite{senger-bp}.

As in \S\ref{sec:connectivity}, if the Brown--Peterson spectrum has an
$E_n$-algebra structure then the map
\[
  BP \to H\mb Z_{(2)} \to H
\]
can be given the structure of a map of $E_n$-algebras. On homology,
this would then induce a monomorphism
\[
  \mb F_2[\xi_1^2, \xi_2^2, \dots] \to \mb F_2[\xi_1, \xi_2, \dots]
\]
of algebras equipped with $E_n$ Dyer--Lashof operations and secondary
Dyer--Lashof operations. The \index{dual Steenrod algebra}dual Steenrod algebra, on the right, has
operations that are completely forced. Therefore, if we can
calculate enough to show that the subalgebra $H_* BP$ is not closed
under secondary operations for $E_n$-algebras, we arrive at an
obstruction to giving $BP$ the structure of $E_n$ ring
spectrum.

The calculation of secondary operations in $H_* H$ is accomplished
with judicious use of juggling formulas, ultimately reducing questions
about secondary operations to questions about primary ones.
\begin{itemize}
\item There is a pushout diagram of $E_\infty$ ring spectra
  \[
    \xymatrix{
      H \otimes MU \ar[r]^i \ar[d] & H \otimes H \ar[d]^j \\
      H \ar[r] & H \otimes_{MU} H.  }
  \]
  This makes $H \otimes MU$ into an augmented $H$-algebra, and gives a
  nullhomotopy of the composite
  $H \otimes MU \to H \otimes H \to H \otimes_{MU} H$. The elements 
  $\alpha$ in $H_* MU$ that map to zero in $H_* H$ are then candidates
  for secondary operations: we can construct
  $\langle j, i, \alpha \rangle$ in the \index{dual Steenrod
    algebra!for $MU$}$MU$-dual Steenrod algebra
  $\pi_* (H \otimes_{MU} H)$.
\item These elements are concretely detected: they have explicit
  representatives on the 1-line of a two-sided \index{spectral sequence!bar}bar spectral sequence
  \[
    \Tor^{H_* MU} (H_* H, \mb F_2) \Rightarrow \pi_* (H \otimes_{MU}
    H).
  \]
\item If we can determine \emph{primary} operations
  $\theta(\langle i, j, \alpha\rangle)$ in the $MU$-dual Steenrod
  algebra, the \index{juggling formulas}juggling formulas of \S\ref{sec:juggling} tell us about
  \index{functional operations}functional operations $\langle j, \alpha, \Theta\rangle$ in the
  ordinary dual Steenrod algebra.
\item \index{Steinberger}Steinberger's calculations of primary operations $\varphi$ in the
  dual Steenrod algebra then allow us to determine the values of
  $\varphi\langle j, \alpha, \Theta\rangle$, and juggling
  formulas again allow us to determine information about secondary
  operations $\langle \alpha, \Theta, \Phi\rangle$ in the dual
  Steenrod algebra.
\end{itemize}
This method, then, reduces us to carrying out some key
computations.

We must determine primary operations in the $MU$-dual Steenrod
algebra. Some of these, by work of \index{Tilson}Tilson \cite{tilson-kunneth}, are
determined by \index{Kochman}Kochman's calculations from
Theorem~\ref{thm:bordismoperations}: the \index{spectral sequence!K\"unneth} K\"unneth spectral sequence
\[
  \Tor^{\pi_* MU}(\mb F_2, \mb F_2) \Rightarrow \pi_*(H \otimes_{MU} H)
\]
calculating the $MU$-dual Steenrod algebra is compatible with
Dyer--Lashof operations. However, there are remaining extension
problems in the $\Tor$, and these turn out to be precisely what we are
interested in when juggling.

The $MU$-dual Steenrod algebra is an exterior algebra, whose
generators correspond to the \index{indecomposables}indecomposables in $\pi_* MU$. The
\index{extension problem}extension problems in the $\Tor$ spectral sequence arise because some
generators in $\pi_* MU$ have nontrivial image in $H_* MU$ and are
detected by $\Tor_1$, while others have trivial image in $H_* MU$ and
are detected by $\Tor_0$. The solution is to find an algebra $R$
mapping to $MU$ that does not have this problem. If we can find one so
that the map $\pi_* R \to \pi_* MU$ is surjective, the map from the
$R$-dual Steenrod algebra to the $MU$-dual Steenrod algebra is
surjective. If the generators of $\pi_* R$ have nontrivial image in
$H_* R$, then the spectral sequence
\[
  \Tor^{H_* R}(H_* H, \mb F_2) \Rightarrow \pi_* (H \otimes_R H)
\]
detects all needed classes with $\Tor_1$ and hence eliminates the
extension problem.

For this purpose, we used the \index{spherical group algebra}spherical group algebra
$\mb S[SL_1(MU)]$. The Dyer--Lashof operations in
$H_* SL_1(MU)$ are derived from the \index{Dyer--Lashof
operations!multiplicative}multiplicative Dyer--Lashof
operations $\widetilde Q^n$ in $\Omega^\infty MU$. This is a lengthy
calculation of power operations in the \index{Hopf ring}Hopf ring, and it is ultimately
determined by calculations of \index{Johnson}Johnson--\index{Noel}Noel of power operations in the
formal group theory of $\index{$MU$}MU$ \cite{noel-johnson-ptypical}.

Finally, we must determine a candidate secondary operation in $H_* H$
to which we can apply this procedure---there are many candidate
operations and many dead ends. The secondary operation is rather
large: it was found using a calculation in Goerss--Hopkins
\index{obstruction theory}obstruction theory that is detailed at length in \cite{bpobstructions}.

\index{secondary operations|)}
\index{Brown--Peterson spectrum|)}
\section{Coherent structures}
\label{sec:coherence}

\index{coherent multiplication|(}

In \S\ref{sec:algebras} we discussed algebras over an operad in a
general topological category, or more generally algebras over a
\index{multicategory}multicategory $\cM$, including \index{extended
  power construction}extended power and \index{free algebra}free algebra
functors. The definitions we used made heavy use of a strict
symmetric monoidal structure on the category of spectra.

In this section we will discuss the coherent viewpoint on these
constructions that makes use of the machinery of \index{Lurie}Lurie
\cite{lurie-higheralgebra}, and with the goal of connecting
different strata in the literature. To begin, we should point some of
the problems that this discussion is meant to solve.

We would like to demonstrate that our constructions are
model-independent. There are several different symmetric monoidal
categories of spectra \cite{ekmm, hovey-shipley-smith-symmetric,
  schwede-gammaspace} with several different model structures, and
there is a nontrivial amount of work involved in showing that an
equivalence between two different categories of spectra gives an
equivalence between categories of algebras
\cite{shipley-schwede-algebrasmodules}. These issues are compounded
when we attempt to relate notions of commutative algebras in different
categories, even if they have equivalent homotopy theory
\cite{white-commutativemonoids}.
  
We would also like to allow weaker structure than a symmetric monoidal
structure. For example, given a fixed $E_n$-algebra $R$ we will use
this to discuss the classification of power operations on
$E_n$-algebras under $R$. Our natural home for this discussion will
be the category of \index{module category} \index{$E_n$-module} $E_n$
$R$-modules (as in Example~\ref{ex:relativemassey}).

\subsection{Structured categories}
\label{sec:structuredcategories}

As discussed in \S\ref{sec:introduction}, classical symmetric
monoidal categories are analogues of commutative monoids with the
difference that they require natural isomorphisms to express
associativity, commutativity, and the like. We can express this
structure using simplicial \index{operad}operads. For any categories $\cC$ and
$\cD$, there is a groupoid $\Fun(\cC, \cD)^\simeq$ of functors and
natural isomorphisms. Taking the nerve, we get a simplicially enriched
category $\Cat$, and it makes sense to ask whether $\cC$ has the
structure of an algebra over a simplicial operad $\oO$.

\begin{example}
  A symmetric monoidal category can be expressed as an algebra over
  the \index{Barratt--Eccles operad}Barratt--Eccles operad \cite{barratt-eccles-operad}.
\end{example}

\begin{example}
  In classical category theory, a \emph{\index{braided monoidal category}braided monoidal
    category} in the sense of \cite{joyal-street-braided} can be
  encoded by a sequence of maps 
  \[
    NP_n \to \Fun(\cC^n, \cC)
  \]
  from the nerves of the pure braid groups to the categories of
  functors $\cC^n \to \cC$. The required compatibilities between these
  maps can be concisely expressed by noting that these nerves assemble
  into an $E_2$-operad, and that a braided monoidal category is an
  algebra over this operad.
\end{example}

We would like to discuss $E_n$-analogues of these structures in the
context of categories with morphism spaces. We will give some
definitions in this section, on the point-set level, with the purpose
of interpolating the older and newer definitions. We would like to say
that an \index{monoidal category!over an operad}$\oO$-monoidal category is an algebra over the operad $\oO$ in
$\Cat$, but this requires us to be clever enough to have a
well-behaved definition of a \emph{space} of functors between two
enriched categories; the failure of enriched categories to have a
well-behaved enriched functor category is a principal motivation for
the use of quasicategories.

\index{multicategory|(}
Until further notice, all categories and multicategories are
assumed to be enriched in spaces and all functors are functors of
enriched categories.

\begin{definition}
  Suppose that $p\co \cC \to \cM$ is a multifunctor, and write
  $\cC_{\ms x}$ for the category $p^{-1}(\ms x)$. Given objects $X_i
  \in \cC_{\ms x_i}$ and a map
  \[
    \alpha\co A \to \Mul_{\cM}(\ms x_1, \dots, \ms x_d; \ms y)
  \]
  of spaces, an \emph{$\alpha$-\index{twisted product}twisted product}
  is an object $Y \in \cC_{\ms y}$ and a map
  $A \to \Mul_{\cC}(X_1,\dots,X_d;Y)$ such that, for any $Z \in \cC$
  with $p(Z)= \ms z$, the diagram
  \[
    \xymatrix{
      \Map_{\cC}(Y,Z) \ar[r] \ar[d] &
      \Map(A,\Mul_{\cC}(X_1,\dots,X_d;Z)) \ar[d] \\
      \Map_\cM(\ms y,\ms z) \ar[r] &
      \Map(A,\Mul_\cM(\ms x_1,\dots,\ms x_d; \ms z))
    }
  \]
  is a pullback. If it exists, we denote it by $A \ltimes_\alpha
  (X_1,\dots,X_d)$.
\end{definition}

\begin{definition}
  \label{def:omonoidal-weak}
  An \emph{\index{weakly $\cM$-monoidal category}weakly $\cM$-monoidal
    category} is a multifunctor $p\co \cC \to \cM$ that has
  $\alpha$-twisted product for any inclusion
  \[
    \alpha\co \{f\} \subset \Mul_{\cM}(\ms x_1,\dots,\ms x_d;\ms y).
  \]
  A \emph{\index{strongly $\cM$-monoidal category}strongly
    $\cM$-monoidal category} is a category that has $\alpha$-twisted
  products for all $\alpha$.
\end{definition}

\begin{remark}
  In particular, for any point
  $f \in \Mul_\cM (\ms x_1,\dots,\ms x_d;\ms y)$, this universal
  property can be used to produce a functor
  \[
    \{f\} \ltimes (-)\co \cC_{\ms x_1} \times \dots \times \cC_{\ms
      x_d} \to \cC_{\ms y},
  \]
  and these are compatible with composition (up to natural
  isomorphism). A weakly $\cM$-monoidal category determines, up to
  natural equivalence, a multifunctor $\cM \to \Cat$.
\end{remark}

\begin{example}
  Every multicategory $\cC$ has a multifunctor to the one-object
  multicategory $\index{$\Comm$}\Comm$ associated to the commutative operad. The
  multicategory is $\Comm$-monoidal if and only if multimorphisms
  $(X_1,\dots,X_d) \to Y$ are always representable by an object
  $X_1 \otimes \dots \otimes X_d$, which is precisely when $\cC$ comes
  from a symmetric monoidal category. It is strongly $\Comm$-monoidal
  only if it is also tensored over spaces in a way compatible with the
  monoidal structure as in Definition~\ref{def:enriched-colimits}.
\end{example}

\begin{example}
  Associated to a monoidal category $\cC$ we can build a
  multicategory: multimorphisms $(X_1,\dots,X_d) \to Y$ are pairs of a
  permutation $\sigma \in \Sigma_d$ and a map
  $f\co X_{\sigma(1)} \otimes \dots \otimes X_{\sigma(d)} \to Y$. There is
  a multifunctor from this category to the multicategory $\index{$\Assoc$}\Assoc$
  corresponding to the associative operad: it sends all objects to the
  unique object, and sends each multimorphism $(\sigma,f)$ as above to
  the permutation $\sigma$. Conversely, an $\Assoc$-monoidal category
  comes from a monoidal category.
\end{example}

\begin{example}
  Suppose that $A$ is a commutative ring and $B$ is an
  $A$-algebra. Then there is a multicategory $\cC$ as follows.
  \begin{enumerate}
  \item An object of $\cC$ is either an $A$-module or a right
    $B$-module.
  \item The set $\Mul_{\cC}(M_1,\dots,M_d;N)$ of multimorphisms is
    \[
      \begin{cases}
        \Hom_A(M_1 \otimes_A \dots \otimes_A M_d,N)
        &\text{if $N$ and all $M_i$ are $A$-modules,}\\
        \Hom_B(M_1 \otimes_A \dots \otimes_A M_d,N)
        &\text{if $N$ and exactly one $M_i$ are $B$-modules,}\\
        \emptyset &\text{otherwise.}
      \end{cases}
    \]
  \end{enumerate}
  This comes equipped with a functor from $\cC$ to the multicategory
  $\index{$\Mod$}\Mod$ from Example~\ref{ex:ringmodule} that parametrizes
  ring-module pairs: any $A$-module is sent to $\ms a$ and any
  $B$-module is sent to $\ms m$. This makes $\cC$ into a
  $\Mod$-monoidal category, expressing the fact that $\Mod_A$ has a
  tensor product and that objects of $\RMod_B$ can be tensored with
  objects of $\Mod_A$. This makes $\RMod_B$ \emph{\index{left-tensored}left-tensored} over
  $\Mod_A$.
\end{example}

\begin{example}
  \label{ex:fiber-spaces}
  \index{fiberwise homotopy theory}Fiberwise homotopy theory studies
  the category $\Spaces_{/B}$ of spaces over $B$. Let $\oO$ be an
  operad and $B$ be a space with the structure of an
  $\oO$-algebra. Then $\Spaces_{/B}$ has the structure of a strongly
  $\oO$-monoidal category in the following way. For spaces
  $X_1,\dots, X_d$ and $Y$ over $B$, the space of multimorphisms is
  the pullback
  \[
    \xymatrix{
      \Mul_{/B}(X_1,\dots,X_d;Y) \ar[rr] \ar[d] &&
      \Map(X_1\times \dots \times X_d,Y) \ar[d] \\
      \oO(d) \ar[r] &
      \Map(B^d, B) \ar[r] &
      \Map(X_1 \times \dots \times X_d, B).
    }
  \]
  That is, a multimorphism consists of a point $f \in \oO(d)$ and a
  commutative diagram
  \[
    \xymatrix{
      X_1 \times \dots \times X_d \ar[r] \ar[d] & Y \ar[d]\\
      B^d \ar[r]_f & B.
    }
  \]
  With this definition, it is straightforward to verify that for
  $\alpha\co A \to \oO(d)$, the $\alpha$-twisted product
  $A \ltimes_\alpha (X_1,\dots,X_d)$ is the following space over $B$:
  \[
    A \times X_1 \times \dots \times X_d \to \oO(d) \times B^d \to B.
  \]
  In general, this should not be expected to be part of a symmetric
  monoidal structure on the category of spaces over $B$, even up to
  equivalence.
\end{example}

\begin{example}
  \label{ex:indexed-spectra}
  Let $\mathcal{L}$ be the category of
  \emph{\index{universe}universes}: an object is a countably infinite
  dimensional inner product space $U$. These objects have an
  associated multicategory: the space
  $\Mul_{\mathcal{L}}(U_1,\dots,U_d;V)$ of multimorphisms is the
  (contractible) space of \index{linear isometries operad}linear isometric embeddings
  $U_1 \oplus \dots \oplus U_d \into V$. Over $\mathcal{L}$, there is
  a category $\Sp_{\mathcal{L}}$ of \emph{\index{indexed spectra}indexed spectra}. An object
  is a pair $(U,X)$ of a universe $U$ and a spectrum $X$ (in the
  \index{Lewis}Lewis--\index{May}May--\index{Steinberger}Steinberger sense \cite{lewis-may-steinberger}) indexed
  on $U$; a multimorphism $((U_1,X_1),\dots,(U_d,X_d)) \to (V,Y)$ is a
  pair of a linear isometric embedding
  $i\co U_1 \oplus \dots \oplus U_d \to V$ and a map
  $i_*(X_1 \wedge \dots \wedge X_d) \to Y$ of spectra indexed on $V$.

  This does not describe the topology on the multimorphisms in this
  category. Given a map $A \to \mathcal{L}(U_1,\dots,U_d;V)$ and
  spectra $X_i$ indexed on $U_i$, there is a \emph{\index{twisted
      half-smash product}twisted half-smash product}
  $A \ltimes (X_1,\dots,X_d)$ indexed on $V$ \cite[\S
  VI]{lewis-may-steinberger}, equivalent to the \index{smash product}smash product
  $A_+ \wedge X_1 \wedge \dots \wedge X_d$. There exists a topology on
  the multimorphisms so that a continuous map in from $A$ is
  equivalent to a map $A \to \mathcal{L}(U_1,\dots,U_d;V)$ and a map
  $A \ltimes (X_1,\dots,X_d) \to Y$. By design, then, the projection
  $\Sp_\mathcal{L} \to \mathcal{L}$ makes the category of indexed
  spectra strongly $\mathcal{L}$-monoidal.
\end{example}

\begin{example}
  Fix an $E_n$-algebra $A$ in $\Sp$, and consider the category of
  $E_n$-algebras $R$ with a factorization $A \to R \to A$ of the
  identity map. This has an associated \emph{\index{stable category}stable category}, serving
  as the natural target for \index{Goodwillie calculus}Goodwillie's
  calculus of functors: the category of \index{$E_n$-module}$E_n$ $A$-modules
  \cite{francis-tangentcomplex}. This category should also not be
  expected to have a symmetric monoidal structure, but the tensor
  product over $R$ does give it the structure of an $E_n$-monoidal
  category. For example, for an associative algebra $A$ in $\Sp$, the tensor
  product over $A$ gives the category of
  $A$-\index{bimodules}bimodules a monoidal  structure.
\end{example}

\subsection{Multi-object algebras}
\label{sec:multiobjectalgebras}
\index{algebra!over a multicategory}
\index{multicategory|(}

Just as we cannot make sense of a commutative monoid in a nonsymmetric
monoidal category, we need relationships between an operad $\oO$ and
any multiplicative structure on a category $\cC$ before $\oO$ can
act on objects.

\begin{definition}
  \label{def:section-algebras}
  Suppose $p\co \cC \to \cN$ and $\cM \to \cN$ are multifunctors. An
  \emph{$\cM$-algebra in $\cC$} is a lift in the diagram
  \[
    \xymatrix{
      &\cC \ar[d]^p \\
      \cM \ar@{.>}[ur] \ar[r] & \cN
    }
  \]
  of multifunctors. We write $\index{$\Alg$}\Alg_{\cM/\cN}(\cC)$ for this category of
  $\cM$-algebras.
\end{definition}

\begin{example}
  If $\cC$ and $\cM$ are arbitrary multicategories, then using the
  unique maps from $\cC$ and $\cM$ to the terminal multicategory
  $\index{$\Comm$}\Comm$ we recover the definition of $\index{$\Alg$}\Alg_\cM(\cC)$, the category
  of $\cM$-algebras in $\cC$ from Definition~\ref{def:multialgebras}.
\end{example}

\begin{example}
  Let the space $B$ be an algebra over an operad $\oO$ and consider the \index{fiberwise homotopy theory}fiberwise category
  $\Spaces_{/B}$ of spaces over $B$ with the strongly $\oO$-monoidal
  structure from Example~\ref{ex:fiber-spaces}. An $\oO$-algebra in
  $\Spaces_{/B}$ is an $\oO$-algebra $X$ with a map of $\oO$-algebras
  $X \to B$.
\end{example}

\begin{example}
  Consider the category of \index{indexed spectra}indexed spectra
  $\Sp_{\mathcal{L}}$ from Example~\ref{ex:indexed-spectra}. The fact
  that the \index{external smash product}external smash product $(X_1 \wedge \dots \wedge X_n)$
  is naturally indexed on the direct sum of the associated \index{universe}universes
  obstructed making the category of spectra indexed on any individual
  universe $\Sp$ strictly symmetric monoidal, and so we cannot ask
  about commutative monoids in $\Sp_{\mathcal{L}}$---but the structure
  available is still enough to do multiplicative homotopy theory. An 
  $\mathcal{L}$-algebra in $\Sp_{\mathcal{L}}$ recovers the classical
  definition of an $E_\infty$ \index{ring spectrum}ring spectrum from
  \cite{may-quinn-ray-ringspectra}. Similarly we can define
  $\oO$-algebras for any operad $\oO$ with an augmentation to
  $\mathcal{L}$ \cite[VII.2.1]{lewis-may-steinberger}.
\end{example}

\begin{proposition}
  Suppose that $\cC$ is strongly $\cN$-monoidal and that $\cM \to \cN$ is a
  map of multicategories. In addition, suppose that $\cC$ has enriched
  colimits and that formation of $\alpha$-twisted products
  preserves enriched colimits in each variable.
  \begin{enumerate}
  \item For objects $\ms x$ and $\ms y$ of $\cM$, there are \index{extended power construction}extended power
    functors
    \[
      \Sym^k_{\cM, \ms x \to \ms y}\co \cC_{\ms x} \to \cC_{\ms y},
    \]
    given by
    \[
      \Sym^k_{\cM, \ms x \to \ms y}(X) = \Mul_\cM(\underbrace{\ms x, \ms x, \dots, \ms x}_{k};
      \ms y) \ltimes (X,X,\dots,X) / \Sigma_k.
    \]
  \item The \index{evaluation functor}evaluation functor $\ev_{\ms x}\co \Alg_\cM(\cC) \to
    \cC_{\ms x}$ has a left adjoint
    \index{free algebra}
    \[
      \Free_{\cM,\ms x}\co \cC_{\ms x} \to \Alg_{\cM}(\cC).
    \]
    The value of $\Free_{\cM,\ms x}(X)$ on any object $\ms y$ of $\cM$ is
    \[
      \ev_{\ms y} (\Free_{\cM,\ms x}(X)) = \coprod_{k \geq 0}
      \Sym^k_{\cM, \ms x \to \ms y}(X).
    \]
  \end{enumerate}
\end{proposition}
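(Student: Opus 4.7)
The plan is to build $\Free_{\cM,\ms x}(X)$ as a multifunctor $\cM \to \cC$ lifting the given multifunctor $\cM \to \cN$, whose underlying object-wise definition is the stated coproduct of extended powers, and then verify the adjunction directly. Throughout, I write $q\co \cM \to \cN$ for the structure multifunctor and $p\co \cC \to \cN$ for the strongly $\cN$-monoidal category; an $\alpha$-twisted product $A \ltimes_\alpha (X_1,\dots,X_k)$ then makes sense for any $\alpha\co A \to \Mul_\cM(\ms x_1,\dots,\ms x_k;\ms y)$, interpreted via the composite with $q$.

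First I would define, for each pair of objects $\ms x, \ms y$ of $\cM$ and each $k \geq 0$, the extended power
\[
  \Sym^k_{\cM,\ms x \to \ms y}(X) = \bigl(\Mul_\cM(\ms x^{\otimes k};\ms y) \ltimes (X,\dots,X)\bigr)/\Sigma_k,
\]
formed using the hypothesized enriched colimits in $\cC_{q(\ms y)}$; this is functorial in $X \in \cC_{q(\ms x)}$ because twisted products preserve enriched colimits in each variable. Setting $F(\ms y) = \coprod_{k \geq 0} \Sym^k_{\cM, \ms x \to \ms y}(X)$ gives the underlying object assignment.

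Next I would promote $F$ to a multifunctor. For a multimorphism $\phi \in \Mul_\cM(\ms y_1,\dots,\ms y_d;\ms z)$, composition in $\cM$ supplies maps
\[
  \Mul_\cM(\ms x^{\otimes k_1};\ms y_1) \times \dots \times \Mul_\cM(\ms x^{\otimes k_d};\ms y_d) \to \Mul_\cM(\ms x^{\otimes k};\ms z),
\]
where $k = \sum k_i$, sending $(\phi_1,\dots,\phi_d) \mapsto \phi\circ(\phi_1,\dots,\phi_d)$. Pairing with the diagonal on copies of $X$ and using that $\cC$ has $\alpha$-twisted products compatible with colimits, this assembles into a multimorphism
\[
  \bigl(\Sym^{k_1}_{\cM,\ms x \to \ms y_1}(X),\dots,\Sym^{k_d}_{\cM,\ms x\to \ms y_d}(X)\bigr) \to \Sym^k_{\cM,\ms x\to\ms z}(X)
\]
over $\phi$; summing over all tuples $(k_1,\dots,k_d)$ gives the multifunctor structure on $F$. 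I would then verify that the associativity of this multiplication follows from associativity of composition in $\cM$ and the universal property of twisted products, and that unitality follows from the identity multimorphisms in $\cM$.

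To prove the adjunction, I would argue by universal property. A multifunctor morphism $F \to A$ over $\cN$ consists of multimorphisms $F(\ms y) \to A(\ms y)$ for every $\ms y \in \cM$, compatible with all multi-compositions; restricting to the $k=1$ summand with $\ms y = \ms x$ (using the unit $\mathrm{id}_{\ms x} \in \Mul_\cM(\ms x;\ms x)$) gives a canonical map $X \to A(\ms x)$ in $\cC_{q(\ms x)}$. Conversely, from a morphism $f\co X \to A(\ms x)$, the $k$-fold multimorphism $A(\ms x)^{\otimes k} \to A(\ms y)$ existing over every point of $\Mul_\cM(\ms x^{\otimes k};\ms y)$ (by $A$ being a multifunctor) combined with $f^{\otimes k}$ and passage to the $\Sigma_k$-quotient determines a map $\Sym^k_{\cM,\ms x\to \ms y}(X) \to A(\ms y)$. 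These assemble over $\ms y$ and $k$ into a morphism $F \to A$ of $\cM$-algebras, and I would check the two constructions are mutually inverse and natural in both variables.

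The main obstacle will be the verification that the multifunctor structure on $F$ is compatible with composition and $\Sigma$-actions on the nose. Specifically, the block-permutation identification $\Sigma_{k_1} \times \dots \times \Sigma_{k_d} \hookrightarrow \Sigma_k$ needs to be reconciled with the full symmetric group action on $\Sym^k$, using the equivariance of composition in $\cM$ with respect to reordering inputs and the universal property of the twisted product; this is the multi-object analogue of the verification that $\Free_\oO$ is a monad in the one-object setting from \S\ref{sec:monads}. Once this bookkeeping is carried out, the adjunction is essentially formal.
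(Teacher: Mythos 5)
Your proposal is correct, and it follows the route the paper intends: the paper states this proposition without writing out a proof, treating it as the multi-object analogue of the free-algebra construction sketched in \S\ref{sec:monads}, which is exactly how you proceed. Defining the free algebra objectwise as the coproduct of $\Sigma_k$-quotiented twisted powers, endowing it with multifunctor structure via composition in $\cM$ together with the hypothesis that twisted products preserve enriched colimits in each variable, and verifying the adjunction by restriction to the weight-one summand along $\mathrm{id}_{\ms x}$ is the standard argument; the equivariance and block-permutation bookkeeping you flag is indeed the only substantive verification, and it goes through just as in the one-object monad case.
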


\begin{example}
  Let $B$ be a space with an action of an operad $\oO$, and let
  $X$ a space over $B$. Then the extended powers are
  \[
    \Sym^k_{\oO}(X) = \left(\oO(k) \times_{\Sigma_k} X^k \to
      \oO(k) \times_{\Sigma_k} B^k \to B\right).
  \]
\end{example}

\begin{example}
  \index{graded algebra}
  Suppose that $\Gamma$ is a commutative monoid and that $X$ is a
  $\Gamma$-graded $E_\infty$ ring spectrum, as in
  Example~\ref{ex:gradedalgebras}. Then there are action maps
  $\Sym^k X_g \to X_{kg}$. These give rise to Dyer--Lashof
  operations $Q^i\co H_* X_g \to H_{*+i} (X_{2g})$.
\end{example}

\begin{example}
  Suppose that $\dots \to X_2 \to X_1 \to X_0$ is a \index{strongly
    filtered}strongly filtered $E_\infty$ ring spectrum, as in
  Example~\ref{ex:filteredalgebras}. Then there are action maps
  $\Sym^k X_n \to X_{kn}$ that are compatible. These give rise to
  power operations $Q^i\co H_* X_n \to H_{*+i} X_{2n}$ that are
  compatible as $n$ varies, and there are induced \index{power operations}power operations on
  the associated spectral sequence.
\end{example}

\begin{example}
  Given a spectrum $X$ indexed on a \index{universe}universe $U$ as in
  Example~\ref{ex:indexed-spectra}, the extended powers are modeled by
  \index{twisted half-smash product}twisted half-smash products:
  \[
    \Sym^k_{U \to U}(X) \simeq E\Sigma_k \ltimes_{\Sigma_k} (X^{\wedge k})
  \]
  This recovers the machinery that was put to effective use in the
  1970s and 1980s for studying $E_\infty$ \index{ring spectrum}ring spectra and
  $H_\infty$-ring spectra, before the development of strictly monoidal
  categories of spectra.
\end{example}

\index{multicategory|)}
\subsection{$\infty$-operads}

\index{$\infty$-operad|(}

The point-set discussion of the previous sections provides a library
of examples. As the basis for a theory it relies on the existence of
rigid models and preservation of colimits.

\begin{example}
  Consider the \index{fiberwise homotopy theory}fiberwise category of
  spaces over a fixed base space $B$. This category has a symmetric
  monoidal \index{fiber product}fiber product $X \times_B Y$. The fiber product typically
  needs fibrant input to represent the homotopy fiber product; the
  fiber product typically does not produce cofibrant output. This
  makes it difficult to use the standard machinery to study algebras
  and modules in this category. These problems have received
  significant attention in the setting of \index{parametrized homotopy theory}
 parametrized stable homotopy
  theory
  \cite{may-sigurdsson-parametrized, lind-bundles,
    malkiewich-lind-parametrized}.
\end{example}

\begin{example}
  The category of nonnegatively graded \index{chain complexes}chain complexes over a
  commutative ring $R$ is equivalent to the category of simplicial
  $R$-modules via the \index{Dold--Kan correspondence}Dold--Kan correspondence. This correspondence is
  lax symmetric monoidal in one direction, but only lax monoidal in
  the other. Moreover, while both sides have morphism spaces, the
  \index{Dold}Dold--\index{Kan}Kan correspondence only preserves these up to weak
  equivalence, even for fibrant-cofibrant objects.
\end{example}

\begin{example}
  In the standard models of \index{equivariant stable homotopy theory}equivariant stable homotopy theory the
  notion of strict $G$-commutativity is equivalent to one encoded by
  \index{operad!equivariant}equivariant operads rather than ordinary ones
  \cite{mandell-may-orthogonal,hhr-kervaire,blumberg-hill-multiplications}. This means that
  an $E_\infty$-algebra $A$ (in the sense of an ordinary $E_\infty$
  operad) may not have a strictly commutative model
  \cite{mcclure-tate,hill-hopkins-multiplicative}, and this makes it
  more difficult to construct a symmetric monoidal model for the
  category of $A$-modules.
\end{example}


The framework of $\infty$-operads
\cite{lurie-higheralgebra} (or, alternatively, that of
\index{dendroidal sets}dendroidal sets
\cite{moerdijk-weiss-dendroidal}) is one method to express coherent
multiplicative structures. Here are some of the salient points.
\begin{itemize}
\item This generalization takes place in the theory of
  \index{$\infty$-category}$\infty$-categories (specifically \index{quasicategory}quasicategories), equivalent to
  the study of categories enriched in spaces. Every category
  enriched in spaces gives rise to an $\infty$-category; every
  $\infty$-category has morphism spaces between its objects.
\item In this framework, for $\infty$-categories $\cC$ and $\cD$ there
  is a space $\index{$\Fun$}\Fun(\cC, \cD)$ encoding the structure of functors and
  natural equivalences.
\item In an $\infty$-category, \index{homotopy (co)limit}homotopy limits and colimits are
  intrinsic notions rather than arising from a particular
  construction. Many common constructions produce presentable
  $\infty$-categories, which have all homotopy limits and colimits.
\item \index{multicategory}Multicategories generalize to so-called
  \emph{$\infty$-operads}. These have an underlying $\infty$-category,
  and there are spaces of multimorphisms to an object from a
  tuple of objects. Every \index{multicategory!topological}topological multicategory gives rise to an
  $\infty$-operad; every $\infty$-operad can be realized by a
  topological multicategory. The precise definitions are similar in
  spirit to \index{Segal}Segal's encoding of $E_\infty$-spaces
  \cite{segal-categoriescohomology}.
\item An $\infty$-operad $\oO$ has an associated notion of an
  $\oO$-monoidal $\infty$-category. An $\oO$-monoidal
  $\infty$-category is expressed in terms of maps $\cC \to \oO$ of
  $\infty$-operads with properties analogous to that 
  \index{weakly $\cM$-monoidal category} from
  Definition~\ref{def:omonoidal-weak}, with the main difference that
  spaces of morphisms are respected.  An $\oO$-monoidal
  $\infty$-category is also equivalent to a functor from $\oO$ to a
  category of categories: each object $\ms x$ of $\oO$ has an
  associated category $\cC_{\ms x}$, and one can associate a map
  \[
    \Mul_{\oO}(\ms x_1,\dots,\ms x_d; \ms y) \to \Fun(\cC_{\ms
      x_1},\dots,\cC_{\ms x_d};\cC_{\ms y})
  \]
  of spaces.
\item We can discuss algebras and modules in terms of sections, just
  as in Definition~\ref{def:section-algebras}.
\end{itemize}

All of this structure is systematically invariant under
equivalence. Equivalent $\infty$-operads give rise to equivalent notions of an
$\oO$-algebra structure on $\cC$; $\infty$-categories equivalent to
$\cC$ have equivalent notions of $\oO$-algebra structures to those on
$\cC$; equivalent $\oO$-monoidal $\infty$-categories have equivalent
categories of $\cM$-algebras for any map $\cM \to \oO$ of
$\infty$-operads.

\begin{example}
  An $E_n$-\index{operad}operad has an associated $\infty$-operad $\oO$, and as a
  result we can define an $E_n$-monoidal $\infty$-category $\cC$ to be
  an $\oO$-monoidal $\infty$-category. When $n=1$, $2$, or $\infty$
  we can recover monoidal, \index{braided monoidal category}braided monoidal, and symmetric monoidal
  structures.
\end{example}

\index{$\infty$-operad|)}
\subsection{Modules}

\index{module category!of an $E_n$-algebra|(}

\index{Mandell}Mandell's theorem (\ref{thm:mandell-e4}), which is about structure on
the homotopy category of left \index{module category!of an $E_n$-algebra}modules over an \index{$E_n$-algebra}$E_n$-algebra, is a
reflection of higher structure on the category of left modules
itself.

\begin{theorem}[{\cite[5.1.2.6, 5.1.2.8]{lurie-higheralgebra}}]
  Suppose that $\cC$ is an $E_k$-monoidal $\infty$-category which
  has \index{geometric realization}geometric realization of simplicial objects, and such that the
  tensor product preserves such geometric realizations in each
  variable separately. Then the category of left modules over an
  $E_{k}$-algebra $A$ is $E_{k-1}$-monoidal, and has all colimits that
  exist in $\cC$.
\end{theorem}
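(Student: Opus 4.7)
The plan is to reduce the theorem to two inputs: (i) the basic theory of left modules over an associative ($E_1$-) algebra in a monoidal $\infty$-category, constructed via two-sided bar constructions, and (ii) Dunn additivity, which expresses an $E_k$-algebra as an $E_1$-algebra in the $\infty$-category of $E_{k-1}$-algebras. The whole argument is driven by the principle that the ``extra'' commutativity in an $E_k$-structure beyond its underlying $E_1$-structure is exactly what promotes the bare $\infty$-category of modules to an $E_{k-1}$-monoidal one.

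First, I would set up the theory for $k=1$. Given an $E_1$-monoidal $\infty$-category $\cC$ admitting geometric realizations with $\otimes$ preserving them in each variable, and an associative algebra $A\in\Alg_{E_1}(\cC)$, I would define $\LMod_A(\cC)$ as sections of the appropriate $\infty$-operadic fibration over $\Assoc$, so that objects are pairs $(M,A\otimes M\to M)$ of coherent left actions. The relative tensor product of a right $A$-module $M$ and a left $A$-module $N$ is then defined as the colimit of the simplicial object $B_\bullet(M,A,N)$ with $B_n=M\otimes A^{\otimes n}\otimes N$. The hypothesis on preservation of geometric realizations is precisely what is needed to check that this bifunctor is associative and unital up to coherent homotopy, and to verify that $\LMod_A(\cC)$ inherits the colimits that exist in $\cC$ (the forgetful functor creates them, since sifted colimits commute with the free-module monad $A\otimes(-)$).

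Second, I would invoke Dunn additivity: the $\infty$-operad $E_k$ is equivalent to $E_1\otimes E_{k-1}$, so an $E_k$-algebra $A$ in $\cC$ is the same data as an associative algebra in the $\infty$-category $\Alg_{E_{k-1}}(\cC)$, and dually an $E_{k-1}$-algebra in $\Alg_{E_1}(\cC)$. The $E_k$-monoidal structure on $\cC$ similarly decomposes: $\Alg_{E_{k-1}}(\cC)$ inherits a natural $E_1$-monoidal structure (tensor product computed in $\cC$), and the geometric-realization hypothesis on $\cC$ transfers to this larger $\infty$-category. Applying step one inside $\Alg_{E_{k-1}}(\cC)$ to the associative algebra $A$ produces the $\infty$-category $\LMod_A(\cC)$ as an $E_{k-1}$-monoidal $\infty$-category, since the ``extra'' $E_{k-1}$-factor is carried along through the bar-construction formula. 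To extract the $E_{k-1}$-monoidal structure as advertised, I would then need to identify $\LMod_A$ formed inside $\Alg_{E_{k-1}}(\cC)$ with $\LMod_A$ formed using only the underlying $E_1$-structure of $A$; this follows by comparing universal properties on the underlying $\infty$-categories.

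The main obstacle is verifying the coherence needed to promote Dunn additivity through the module construction: one must show that the $\infty$-operadic equivalence $E_k\simeq E_1\otimes E_{k-1}$ interacts correctly with the $\LMod$-functor, i.e.\ that taking left modules commutes (up to equivalence of $E_{k-1}$-monoidal $\infty$-categories) with passing to algebras in the other factor. Concretely, this amounts to a careful manipulation of $\infty$-operadic tensor products and is where Lurie does the real work; once this identification is in hand, the theorem follows formally, and the colimits statement is an immediate consequence of the fact that the forgetful functor $\LMod_A(\cC)\to\cC$ is conservative and preserves and creates all colimits indexed by diagrams admitting a decomposition into the free-forgetful bar resolution, which under our hypotheses covers every colimit existing in $\cC$.
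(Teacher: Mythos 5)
The paper itself gives no proof of this statement---it is quoted directly from Lurie---so the relevant comparison is with the argument in the cited source, which, like your sketch, runs on Dunn additivity plus bar constructions. Your first step and your colimit argument are fine in outline. The genuine gap is in your second step: you propose to apply the $k=1$ module theory inside $\Alg_{E_{k-1}}(\cC)$ (with its induced $E_1$-monoidal structure) and then to identify $\LMod_A$ formed there with $\LMod_A(\cC)$ ``by comparing universal properties.'' That identification is false. An object of $\LMod_A\bigl(\Alg_{E_{k-1}}(\cC)\bigr)$ is an $E_{k-1}$-algebra in $\cC$ equipped with an $A$-action; for $\cC = \Sp$ it is in particular a unital ring spectrum, so for instance $\Sigma A$ is a perfectly good object of $\LMod_A(\cC)$ that is not of this form. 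Moreover, even granting the identification, your ``step one'' applied inside $\Alg_{E_{k-1}}(\cC)$ only supplies relative tensor products of (right, left) module pairs and colimits; it does not by itself put a monoidal structure on a category of \emph{left} modules---that is precisely the content of the theorem for $k\geq 2$---so as written nothing in your argument actually produces the advertised $E_{k-1}$-monoidal structure.

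The correct way to feed in Dunn additivity is one categorical level up, and this is where the real work lies in Lurie's proof. One upgrades the assignment $A \mapsto \LMod_A(\cC)$ to a monoidal functor from the $E_{k-1}$-monoidal $\infty$-category $\Alg_{E_1}(\cC)$ to $\Cat$, covariant in $A$ via extension of scalars; the hypothesis that $\cC$ has geometric realizations preserved by $\otimes$ is exactly what makes this functoriality (a two-sided bar construction) available and coherent. Dunn additivity then exhibits the $E_k$-algebra $A$ as an $E_{k-1}$-algebra object of $\Alg_{E_1}(\cC)$, and applying the monoidal functor $\LMod_{(-)}(\cC)$ transports this structure to an $E_{k-1}$-monoidal structure on $\LMod_A(\cC)$ itself, with underlying tensor product $(M,N) \mapsto A \otimes_{A\otimes A}(M\otimes N) \simeq M \otimes_A N$. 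Your closing remark about colimits (monadicity of the forgetful functor plus bar resolutions, using that $A\otimes-$ preserves geometric realizations) is essentially the standard argument and is independent of the monoidal part; the piece you need to repair is the passage from Dunn additivity to the monoidal structure on modules.
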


As previously discussed, the category of left modules over an
associative algebra $R$ is not made monoidal under the tensor product
over $R$, but the category of \index{bimodules}bimodules is. The
generalization of this result to $E_n$-algebras is the following.

\begin{theorem}[{\cite[3.4.4.2]{lurie-higheralgebra}}]
  Suppose $\cC$ is an $E_n$-monoidal presentable $\infty$-category such that the
  monoidal structure preserves homotopy colimits in each variable
  separately. Then for any $E_n$-algebra $R$ in $\cC$, there is a
  category $\Mod^{E_n}_R(\cC)$ of $E_n$ $R$-modules. This is a
  presentable $E_n$-monoidal $\infty$-category whose underlying
  monoidal operation is the tensor product over $R$.

  In particular, if $\cC$ is a presentable $\infty$-category with a
  symmetric monoidal structure that preserves colimits in each
  variable, and $R$ is an $E_n$-algebra in $\cC$, the category of
  $E_n$ $R$-modules in $\cC$ has an $E_n$-monoidal structure that
  preserves colimits in each variable.
\end{theorem}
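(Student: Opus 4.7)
The plan is to proceed by induction on $n$, using Dunn additivity $E_n \simeq E_1 \otimes E_{n-1}$ to reduce to the previous theorem on left modules over an $E_1$-algebra. The key identification is that an $E_n$-algebra $R$ in $\cC$ is the same data as an $E_1$-algebra (i.e.\ associative algebra object) in the $\infty$-category $\Alg_{E_{n-1}}(\cC)$, and the latter category inherits an $E_1$-monoidal structure from the ``leftover'' $E_1$-factor in the Dunn decomposition, with tensor product preserving colimits in each variable thanks to the hypothesis on $\cC$ together with the fact that sifted colimits of $E_{n-1}$-algebras are computed in $\cC$.

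The base case $n = 1$ amounts to identifying $E_1$-modules over $R$ with $R$-$R$-bimodules (via the operad of little intervals with a marked point) and noting that the relative tensor product $M \otimes_R N$, defined by the two-sided bar construction, gives the bimodule category an $E_1$-monoidal structure whose underlying operation is $\otimes_R$. For the inductive step, applying the base case inside $\Alg_{E_{n-1}}(\cC)$ produces an $E_1$-monoidal category of $R$-bimodules in $\Alg_{E_{n-1}}(\cC)$; the remaining $n-1$ $E_1$-factors act coherently on this bimodule category, and assembling them via Dunn additivity yields the desired $E_n$-monoidal structure on $\Mod^{E_n}_R(\cC)$, whose underlying product is indeed $\otimes_R$.

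The main obstacle is coherence. The construction produces $n$ separate $E_1$-monoidal structures on the underlying category (one from $\otimes_R$, and $n-1$ inherited from the ambient $E_{n-1}$-structure), and one must verify these interchange coherently so as to fit together into a single $E_n$-monoidal structure via Dunn additivity, rather than merely an $E_1^{\times n}$-monoidal structure. Concretely, this requires producing an equivalence
\[
  \Mod^{E_n}_R(\cC) \simeq \LMod_R\bigl(\Alg_{E_{n-1}}(\cC)\bigr)
\]
of $E_n$-monoidal $\infty$-categories, which amounts to checking that the operadic definition of $E_n$-modules (parametrized by configurations of little $n$-disks with a marked center) is compatible with the Boardman--Vogt tensor product of $\infty$-operads that underlies Dunn's theorem. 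Once this compatibility is in hand, presentability and colimit preservation transfer automatically through the induction: $\Alg_{E_{n-1}}(\cC)$ is presentable whenever $\cC$ is, the monoidal product on modules is computed by a sifted (bar) colimit, and the inductive hypothesis ensures the needed colimit preservation at each stage.
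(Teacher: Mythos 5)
The paper offers no proof of this statement: it is quoted verbatim from Lurie, whose argument is a direct operadic one. The $E_n$ operads are \emph{coherent} $\infty$-operads, and for any coherent $\infty$-operad $\oO$ and compatible $\oO$-monoidal $\cC$, Lurie constructs $\Mod^{\oO}_A(\cC)^\otimes \to \oO^\otimes$ directly and verifies the coCartesian fibration conditions using operadic (bar-type) colimit diagrams; there is no induction on $n$ through Dunn additivity. So your route is genuinely different from the source being cited, and it needs to stand on its own.

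It does not, because the central step of your induction is wrong: the claimed equivalence $\Mod^{E_n}_R(\cC) \simeq \LMod_R\bigl(\Alg_{E_{n-1}}(\cC)\bigr)$ cannot hold. An object of the right-hand side is an $E_{n-1}$-algebra in $\cC$ equipped with an $R$-action, whereas an $E_n$ $R$-module is a bare object of $\cC$ with operations parametrized by configurations of little disks with one marked point --- it carries no algebra structure of its own. Already for $n=2$ and $\cC = \Sp$ one has $\Mod^{E_2}_R(\Sp) \simeq \LMod_{\THH(R)}(\Sp)$ (the theorem quoted immediately after this one in the paper), whose objects are plain spectra; this is neither left $R$-modules in associative ring spectra nor $R$-bimodules, i.e.\ $\LMod_{R \otimes R^{op}}$, which is a different category from $\LMod_{\THH(R)}$. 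So the inductive step transports the wrong category, and the ``coherence of the $n$ interchanging $E_1$-structures'' that you identify as the main obstacle is not where the difficulty lies. The correct module-level linearization is left modules over the factorization homology $\int_{D^n \setminus 0} R$, but that identification is normally deduced \emph{from} the structure theory of $\Mod^{E_n}_R$ rather than used to establish it; if you wish to bypass Lurie's coherence machinery you would still need an independent construction of the $E_n$-monoidal structure whose underlying product is $\otimes_R$, which your argument does not supply. Your base case ($E_1$ $R$-modules are $R$-bimodules, made monoidal by the two-sided bar construction computing $\otimes_R$) and your remarks on presentability and colimit preservation are fine, but the induction does not get off the ground.
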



Roughly, an \index{module category!of an $E_n$-algebra}$E_n$ $R$-module $M$ has multiplication operations
$R^{\otimes k} \otimes M \to M$ parametrized by $(k+1)$-tuples of
points of \index{configuration space}configuration space, where one point is marked by $M$ and
the rest by $R$. This has the more precise description of
$E_n$-modules as left modules.

\begin{theorem}[{\cite[5.5.4.16]{lurie-higheralgebra}, \cite{francis-tangentcomplex}}]
  Suppose that $\cC$ is a symmetric monoidal $\infty$-category and
  that the monoidal product preserves colimits in each variable
  separately. For an $E_n$-algebra $R$ in $\cC$, the
  \index{factorization homology}factorization homology
  $\int_{D^n \setminus 0} R$ has the structure of an $E_1$-algebra,
  and the category of $E_n$ $R$-modules is equivalent to the category
  of left modules over $\int_{D^n \setminus 0} R$.
\end{theorem}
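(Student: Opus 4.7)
The plan is to construct both the $E_1$-algebra structure and the module equivalence from the radial decomposition of the punctured disk, identifying $\int_{D^n \setminus 0} R$ as Francis's enveloping $E_1$-algebra of $R$.

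First I would exploit the diffeomorphism $D^n \setminus 0 \cong S^{n-1} \times \mathbb{R}$ given by radial coordinates $x \mapsto (x/|x|, \log |x|)$. Any rectilinear embedding $\mathbb{R}^{\sqcup k} \hookrightarrow \mathbb{R}$ of intervals extends to a codimension-zero embedding of $n$-manifolds $(S^{n-1} \times \mathbb{R})^{\sqcup k} \hookrightarrow S^{n-1} \times \mathbb{R}$ by taking the product with the identity on $S^{n-1}$. These assemble into a map of $\infty$-operads from $E_1$ to the little $n$-disks operad of $S^{n-1} \times \mathbb{R}$, and the functoriality of factorization homology in open embeddings of $n$-manifolds then endows $\int_{D^n \setminus 0} R$ with a canonical $E_1$-algebra structure.

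For the module equivalence I would construct a comparison functor
\[
  \Phi\co \Mod^{E_n}_R(\cC) \to \LMod_{\int_{D^n \setminus 0} R}(\cC)
\]
and verify it by monadic comparison. By definition an $E_n$ $R$-module $M$ is acted on by configurations of little $n$-disks in $\mathbb{R}^n$ with one marked disk carrying $M$ and the remaining disks carrying $R$; after rescaling, one may place the marked disk at the origin with the remaining disks lying in $D^n \setminus 0$, and the induced action factors through $\int_{D^n \setminus 0} R$, defining $\Phi$. To identify the two left adjoints to the forgetful functors down to $\cC$, I would compute the free $E_n$ $R$-module on $X$ as the pointed factorization homology $\int_{(D^n, 0)}(R; X)$, and then invoke the excision/pushforward formula for factorization homology across the open decomposition of $D^n$ into a small disk at $0$ and the complementary punctured disk. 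This should exhibit the free object as $\left(\int_{D^n \setminus 0} R\right) \otimes X$, which is also the free left $\int_{D^n \setminus 0} R$-module on $X$. Once the monads on $\cC$ are identified, Barr--Beck (whose splitting hypotheses are supplied by the assumption that $\otimes$ preserves colimits in each variable separately) yields the equivalence.

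The main obstacle is making this excision identification fully coherent as a map of $\infty$-operads: when $X$ rather than $R$ is placed at the puncture, one must show that the small central disk contributes only tensoring with $X$ and no further factorization-homology content. Establishing this amounts to applying the pushforward formula for factorization homology to the radial projection $S^{n-1} \times \mathbb{R} \to \mathbb{R}$ and then trivializing the central disk through the pointing; the remaining bookkeeping of symmetries and contractibility of the relevant embedding spaces should then force the monads on the two sides to agree, after which the equivalence follows formally.
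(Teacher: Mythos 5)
Your outline is essentially the argument of the sources the paper cites (Francis, and in the manifold-theoretic form Ayala--Francis), and it is sound: the radial identification $D^n \setminus 0 \cong S^{n-1} \times \mathbb{R}$ gives the $E_1$-structure, and the Barr--Beck comparison reduces everything to identifying the free $E_n$ $R$-module on $X$ with $\left(\int_{D^n \setminus 0} R\right) \otimes X$, which your excision/rescaling step accomplishes --- the essential geometric input being that the space of configurations of $k+1$ little disks with one disk marked deformation retracts onto configurations with the marked disk centered at the origin, leaving $k$ free disks in the punctured disk. Be aware, though, that the paper does not prove this theorem at all: it is quoted from Lurie and Francis, and the remark immediately following it in the text sketches a genuinely different route, valid in the category of spectra. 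There one observes that the free $E_n$ $R$-module $\Free_{E_n\mbox{-}R}(\mb S)$ is a compact generator, so the Schwede--Shipley theorem identifies $E_n$ $R$-modules with left modules over its endomorphism ring, which is $\Free_{E_n\mbox{-}R}(\mb S)$ itself; the theorem then amounts to the identification of that free module with $\int_{D^n\setminus 0} R$. Both routes therefore hinge on the same computation of the free $E_n$ $R$-module. Yours concludes by matching monads, which works in any symmetric monoidal $\infty$-category satisfying the stated colimit hypotheses and so matches the generality of the statement; the paper's remark concludes by compact generation, which is quicker but is confined to stable presentable settings where Schwede--Shipley applies. One small point to tighten in your version: monadicity of the forgetful functor $\Mod^{E_n}_R(\cC) \to \cC$ is itself a nontrivial input (it is where the hypothesis that the tensor product preserves geometric realizations is actually used), so it should be cited from Lurie rather than assumed.
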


\begin{remark}
  In the category of spectra, this could be regarded as a consequence
  of the \index{Schwede--Shipley theorem}Schwede--Shipley theorem \cite{schwede-shipley-stablemodules}
  or its generalizations. There is a \index{adjunction!free-forgetful}free-forgetful adjunction between
  $E_n$ $R$-modules and $\Sp$, and the image $\Free_{E_n\mbox{-}R}(\mb S)$ of
  the sphere spectrum under the left adjoint is a compact generator
  for the category of $E_n$ $R$-modules. Therefore, $E_n$ $R$-modules
  are equivalent to the category of modules over the \index{endomorphism ring}endomorphism ring
  \[
    F_{E_n\mbox{-}R}(\Free_{E_n\mbox{-}R}(\mb S),
    \Free_{E_n\mbox{-}R}(\mb S)) \simeq \Free_{E_n\mbox{-}R}(\mb S).
  \]
  This theorem, then, is an identification of the free
  $E_n$ $R$-module.
\end{remark}

\begin{example}
  When $n=1$, the category of $E_1$ $R$-modules is the category of
  left modules over $R\otimes R^\op$. When $n=2$, the category of
  $E_2$-$R$-modules is the category of left modules over the
  \index{topological Hochschild homology|see{$\THH$}}topological Hochschild homology $\index{$\THH$}\THH(R)$.
\end{example}

\index{module category!of an $E_n$-algebra|)}

\subsection{Coherent powers}
\label{sec:powers}

In the classical case, we described an $\oO$-algebra structure
on $A$ in terms of \index{action map}action maps
\[
  \Sym^k_\oO(A) = \oO(k) \otimes_{\Sigma_k} A^{\otimes k} \to A
\]
from \index{extended power construction}extended power constructions
to $A$, and gave a formula
\index{free algebra}
\[
  \Free_\oO(X) = \coprod_{k \geq 0} \Sym^k_{\oO}(A)
\]
for the free $\oO$-algebra on an object in the case where the monoidal
structure is compatible with \index{enriched colimit}enriched colimits; we also discussed the
multi-object analogue in \S \ref{sec:multiobjectalgebras}. The
analogous constructions for $\infty$-operads are carried out in
\cite[\S 3.1.3]{lurie-higheralgebra}, and we will sketch these results
here.

Fix an $\infty$-operad $\oO$. For any objects $\ms x_1,\dots, \ms x_d,
\ms y$ of $\oO$, we can construct a space
\[
  \Mul_{\oO}(\ms x_1,\dots, \ms x_d;\ms y)
\]
of multimorphisms in $\oO$; if the $\ms x_i$ are equal, this further
can be given a natural action of the symmetric group.

Let $\cC$ be an $\oO$-monoidal $\infty$-category $\cC$. In particular,
$\cC$ encodes categories $\cC_{\ms x}$ parametrized by the objects
$\ms x$ of $\oO$, and functors
$f\co \cC_{\ms x_1} \times \dots \times \cC_{\ms x_d} \to \cC_{\ms y}$
parametrized by the multimorphisms
$f\co (\ms x_1,\dots,\ms x_d) \to \ms y$ of $\oO$. Suppose that the
categories $\cC_{\ms x}$ have homotopy colimits and the functors
preserve homotopy colimits in each variable. Then there exist
\emph{\index{extended power construction}extended power functors}
\[
  \Sym^k_{\oO,\ms x \to \ms y}\co \cC_{\ms x} \to\cC_{\ms y},
\]
whose value on $X \in \cC_{\ms u}$ is a homotopy colimit
\[
  \left(\hocolim_{\alpha \in \Mul_{\oO}(\ms x, \dots, \ms x; \ms y)}
    \alpha(X \oplus \dots \oplus X)\right)_{h\Sigma_k}.
\]
These extended powers have the property that an $\oO$-algebra $A$ has
natural maps $\Sym^k_{\oO,\ms x \to \ms y} (A(\ms x)) \to A(\ms
y)$. Moreover, there is a \index{adjunction!free-forgetful}free-forgetful adjunction between
$\oO$-algebras and $\cC_{\ms x}$, and the free object\index{free algebra}
$\Free_{\oO,\ms x}(X)$ on $X \in \cC_U$ has the property that its
value on $\ms y$ is exhibited as the coproduct
\[
  \ev_{\ms y}(\Free_{\oO,\ms x}(X)) \simeq \coprod_{k \geq 0}
  \Sym^k_{\oO, \ms x \to \ms y}(X).
\]

\begin{remark}
  Composing with the diagonal $\cC_{\ms x} \to \prod \cC_{\ms x}$
  gives a $\Sigma_k$-equivariant map
  \[
    \Mul_{\oO}(\underbrace{\ms x,\dots, \ms x}_k;\ms y) \to
    \Fun(\cC_{\ms x}\times\dots\times\cC_{\ms x}, \cC_{\ms y}) \to
    \Fun(\cC_{\ms x}, \cC_{\ms y})
  \]
  that factors through the homotopy orbit space
  \[
    P(k) = \Mul_{\oO}(\ms x,\dots, \ms x;\ms
    y)_{h\Sigma_k}.
  \]
  This space $P(k)$ then serves as a parameter space for tensor-power
  functors $\cC_{\ms x} \to \cC_{\ms y}$.
  
  In the case of an ordinary single-object $\infty$-operad $\oO$ such
  as an $E_n$-operad, we can rephrase in terms of $P(k)$. Such an
  $\infty$-operad $\oO$ is equivalent to an ordinary operad in
  spaces and an $\oO$-monoidal $\infty$-category is equivalent
  to an $\infty$-category $\cC$ with a map
  $\oO \to \End(\cC)$. We recover a formula
  \[
    \Free_{\oO}(X) \simeq \coprod_{k \geq 0} \hocolim_{\alpha \in
      P(k)} \alpha(X,\dots,X)
  \]
  for the free algebra on $X$. When $X = S^m$, this is the \index{Thom spectrum}Thom
  spectrum
  \[
    \coprod_{k \geq 0} P(k)^{m \rho},
  \]
  closely related to Remark~\ref{rmk:thom}.

  When $\oO$ is an $E_n$-operad, the space $P(k)$ is equivalent to the
  space $\mathcal{C}_n(k) / \Sigma_k$, a model for the space of
  unordered \index{configuration space}configurations of $k$ points in $\mb R^n$. When
  $n = \infty$ the space $P(k)$ is a model for $B\Sigma_k$, and we
  find that the we recover the ordinary homotopy symmetric power:
  \[
    \Sym^k_{E_\infty}(X) \simeq (X^{\otimes k})_{h\Sigma_k}.
  \]
\end{remark}

\begin{example}
  Fix a space $B$ and consider the \index{fiberwise homotopy theory}fiberwise category $\Spaces_{/B}$. The
  homotopy \index{fiber product}fiber product $X \times^h_B Y$ gives this the structure of
  a symmetric monoidal $\infty$-category, breaking up independently
  over the components of $B$. If $B$ is path-connected, then the
  extended power and free functors on $(X \to B)$ are those obtained by
  applying the extended power and free functors to the fiber.
\end{example}

\begin{example}
  \index{$E_n$-module}
  Given an $E_n$ $R$-module $M$, the free $E_n$ $R$-algebra on an
  $E_n$ $R$-module $M$ is
  \[
    \coprod_{k \geq 0} \hocolim_{\alpha \in C_n(k)/\Sigma_k}
    M^{\otimes_\alpha k},
  \]
  where each point $\alpha$ of \index{configuration space}configuration space determines a
  functor $M^{\otimes_\alpha k} \simeq M \otimes_R \dots \otimes_R
  M$.

  More can be said under the identification between $E_n$-modules and
  modules over \index{factorization homology}factorization homology. If $M$ is the free
  $E_n$ $R$-module on $S^m$, then we obtain an identification of the
  free $E_n$-algebra under $R$ on $S^m$:
  \[
    R \amalg^{E_n} \Free_{E_n}(S^m) \simeq \coprod_{k \geq 0}
    \left(\int_{\mb R^k
      \setminus \{p_1,\dots,p_k\}} R\right) \otimes_{\Sigma_k} S^{m \rho_k}.
  \]
\end{example}

\begin{remark}
  The interaction between connective objects and their
  \index{Postnikov tower!algebras}Postnikov truncations from \S
  \ref{sec:connectivity} generalizes to the case where we have an
  $\oO$-monoidal $\infty$-category $\cC$ with a \emph{compatible
    \index{$t$-structure}$t$-structure} in the sense of
  \cite[2.2.1.3]{lurie-higheralgebra}. This means that the categories
  $\cC_{\ms x}$ indexed by the objects $\ms x$ of $\oO$ all have
  $t$-structures, and the functors induced by the morphisms in $\oO$
  are all additive with respect to connectivity. Then
  {\cite[2.2.1.8]{lurie-higheralgebra}} implies that connective
  $\oO$-algebras have Postnikov towers: the collection of truncation
  functors $\tau_{\leq n}$ is compatible with the $\oO$-monoidal
  structure on $\cC_{\geq 0}$.
\end{remark}
\index{coherent multiplication|)}

\section{Further invariants}
\label{sec:further}

\subsection{Units and Picard spaces}

\begin{definition}
  For an $E_n$-monoidal $\infty$-category $\cC$ with unit $\mb I$, the
  \emph{\index{Picard space}Picard space} $\index{$\Pic$}\Pic(\cC)$ is the full subgroupoid of $\cC$
  spanned by the \emph{\index{invertible object}invertible objects}: objects $X$ for which
  there exists an object $Y$ such that
  $Y \otimes X \simeq X \otimes Y \simeq \mb I$.
\end{definition}

\begin{remark}
  The classical \index{Picard group}Picard group of the homotopy category $h\cC$ is the
  set $\pi_0 \Pic(\cC)$ of path components.
\end{remark}

In particular, $\Pic(\cC)$ is closed under the $E_n$-monoidal
structure on $\cC$, giving it a canonical $E_n$-space
structure. Moreover, by construction
$\pi_0 \Pic(\cC) = (\pi_0 \cC)^\times$ is a group, and so $\Pic(\cC)$
is an \index{iterated loop space}$n$-fold loop space. The loop space $\Omega \Pic(\cC)$ is the
space of homotopy self-equivalences of the unit $\mb I$; in the case
of the category $\LMod_R$ of left modules, it is homotopy equivalent
to the unit group $GL_1(R)$ of \index{unit group!of a ring spectrum}
$R$.
\index{loop space|seealso{iterated loop space}}

\begin{proposition}[{\cite[\S 7]{ando-blumberg-gepner-twisted}}]
  If $R$ is an $E_n$ ring spectrum, then the space $GL_1(R)$ of
  homotopy self-equivalences of the left module $R$ has an $n$-fold
  delooping. If $n \geq 2$, the space $\Pic(R) = \Pic(\LMod_R)$ has an
  $(n-1)$-fold delooping. 
\end{proposition}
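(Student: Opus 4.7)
The plan is to deduce both parts from the structure theorems for module categories together with the recognition principle for $E_m$-spaces. Recall that the recognition principle asserts that a group-like $E_m$-space $Y$ is equivalent to $\Omega^m Z$ for some space $Z$, i.e.\ it admits an $m$-fold delooping.

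For the statement about $\Pic(R)$ when $n \geq 2$, I would first invoke Mandell's theorem (Theorem~\ref{thm:mandell-e4}) in its coherent refinement: an $E_n$-algebra $R$ in $\Sp$ makes $\LMod_R$ into an $E_{n-1}$-monoidal presentable $\infty$-category whose unit is $R$ and whose monoidal product is $\otimes_R$. Next I would observe that the full subgroupoid $\Pic(\LMod_R)$ spanned by invertible objects is closed under the tensor product: if $X$ and $X'$ have inverses $Y$ and $Y'$, then $Y' \otimes_R Y$ is an inverse for $X \otimes_R X'$. Because the subcategory is closed under the monoidal operation, it inherits a natural $E_{n-1}$-space structure from the $E_{n-1}$-monoidal structure on $\LMod_R$. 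By the very definition of invertibility, $\pi_0 \Pic(R)$ is a group, so $\Pic(R)$ is group-like. The recognition principle then gives an $(n-1)$-fold delooping.

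For $GL_1(R)$, I would separate the cases $n \geq 2$ and $n = 1$. When $n \geq 2$, the remark preceding the proposition identifies $GL_1(R)$ with the loop space $\Omega \Pic(R)$ at the basepoint $R$; since $\Pic(R)$ already has an $(n-1)$-fold delooping, its loop space $GL_1(R)$ has an $n$-fold delooping. For $n = 1$, the category $\LMod_R$ is not monoidal, so one cannot pass through $\Pic$; instead, the $E_1$-structure on $R$ makes $\Omega^\infty R$ into an $E_1$-space, and $GL_1(R)$ is the union of components lying over $(\pi_0 R)^\times \subset \pi_0 R$. This subspace is closed under the monoid structure and is group-like by construction, so a second application of the recognition principle (now for $E_1$-spaces) produces the one-fold delooping $BGL_1(R)$.

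The main obstacle is the first step: one must promote the point-set or homotopy-category statement of Mandell's theorem to a coherent $E_{n-1}$-monoidal structure on $\LMod_R$ in the sense of \S\ref{sec:coherence}, so that restricting to invertible objects yields a genuine $E_{n-1}$-space rather than a mere $E_{n-1}$-structure in the homotopy category. Once that coherent structure is in hand, the remainder is essentially formal: closure of $\Pic$ under the monoidal operation, group-likeness, and the classical recognition principle of Boardman--Vogt and May.
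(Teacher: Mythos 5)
Your argument is correct and follows essentially the route the paper itself sketches in the remark preceding the proposition: the coherent statement that $\LMod_R$ is $E_{n-1}$-monoidal (supplied by the Lurie theorems quoted in the subsection on modules, which is the refinement of Mandell's theorem you ask for) makes $\Pic(\LMod_R)$ a group-like $E_{n-1}$-space, hence $(n-1)$-fold deloopable, and $GL_1(R) \simeq \Omega \Pic(R)$ then deloops $n$ times. The only remark worth adding is that your $n=1$ argument---restricting the multiplicative $E_1$-structure on $\Omega^\infty R$ to the unit components---in fact works uniformly for every $n$ and is how the cited reference treats $GL_1(R)$, so the case split for $GL_1$ is avoidable.
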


\subsection{Topological Andr\'e--Quillen cohomology}
\label{sec:TAQ}
\index{$\TAQ$|(}

\index{topological Andr\'e-Quillen homology|see{$\TAQ$}}Topological
Andr\'e-Quillen homology and cohomology are invariants of ring spectra
developed by \index{Kriz}Kriz and \index{Basterra}Basterra \cite{kriz-towers,
  basterra-andrequillen}. For a fixed map of $E_\infty$ \index{ring spectrum}ring spectra
$A \to B$, we can define a topological Andr\'e--Quillen homology
object $\TAQ(A \to R \to B)$ for any object $R$ in the category of
$E_\infty$ rings between $A$ and $B$. This is characterized by the
following properties \cite{basterra-mandell-taqcohomology}:
\begin{enumerate}
\item It naturally takes values in the category of $B$-modules.
\item It takes homotopy colimits of $E_\infty$ ring spectra between
  $A$ and $B$ to homotopy colimits of $B$-modules.
\item There is a natural map $B \otimes_A (R/A) \to \TAQ(A \to R \to
  B)$.
\item For a left $A$-module $X$ with a map $X \to B$, the composite
  natural map
  \[
    B \otimes_A X \to B \otimes_A \Free^A_{E_\infty}(X) \to \TAQ(A \to
    \Free^A_{E_\infty}(X) \to B)
  \]
  of $B$-modules is an equivalence.
\item Under the above equivalence, the natural map
  \[
    \TAQ(A \to \Free^A_{E_\infty} \Free^A_{E_\infty} (X) \to B) \to
    \TAQ(A \to \Free^A_{E_\infty}(X) \to B)
  \]
  is equivalent to the map
  \[
    B \otimes_A \Free^A_{E_\infty}(X) \to B \otimes_A X
  \]
  that collapses $B \otimes_A (\amalg \Sym^k(X))$ to the factor with $k=1$.
\end{enumerate}

Topological Andr\'e-Quillen homology measures how difficult it is to
build $R$ as an $A$-algebra: any description of $R$ as an iterated
pushout along maps of free of $E_\infty$-algebras, starting from $A$,
determines a description of the topological Andr\'e--Quillen
cohomology of $R$ as an iterated pushout of $B$-modules. \index{Basterra}Basterra
showed that $\TAQ$-cohomology groups
\[
  \TAQ^n(R;M) = [\TAQ(\mb S \to R \to R), \Sigma^n M]_{\Mod_R}
\]
plays the role for \index{Postnikov tower}Postnikov towers of $E_\infty$ ring spectra that
ordinary cohomology does for spectra.

From this point of view, $\TAQ$ also has natural generalizations to
$\TAQ^\oO$ for algebras over an arbitrary operad
\cite{basterra-mandell-taqcohomology,harper-quillen}, although there
may be a choice of target category that takes more work to
describe. In particular, for $E_n$-algebras these are related to an
iterated bar construction \cite{basterra-mandell-iteratedthh}.

Topological Andr\'e--Quillen homology also enjoys the following
properties, proved in \cite{basterra-andrequillen,
  basterra-mandell-taqcohomology}.
\begin{quote}
  \begin{description}
  \item[\index{base-change}Base-change:] For a map $B \to C$, we have
    a natural equivalence
    \[
      C \otimes_B \TAQ(A \to R \to B) \simeq \TAQ(A \to R \to C).
    \]
    In particular, if we define $\Omega_{R/A} = \TAQ(A \to R \to R)$,
    then
    \[
      \TAQ(A \to R \to B) = B \otimes_R \Omega_{R/A}.
    \]
  \item[\index{transitivity}Transitivity:] For a composite
    $A \to R \to S \to B$, there is a natural cofiber sequence
    \[
      \TAQ(A \to R \to B) \to \TAQ(A \to S \to B) \to \TAQ(R \to S \to
      B).
    \]
    In particular, for $A \to R \to S$ we have cofiber sequences
    \[
      S \otimes_R \Omega_{R/A} \to \Omega_{S/A} \to \Omega_{S/R}.
    \]
  \item[\index{representability!$\TAQ$-cohomology}Representability:] Suppose
    that there is a functor $h^*$ from the category of pairs
    $(R \to S)$ of $E_\infty$ ring spectra between $A$ and $B$ to the
    category of graded abelian groups. Suppose that this is a
    \index{cohomology theory!for ring spectra}cohomology theory on the category of $E_\infty$ ring spectra
    between $A$ and $B$: it satisfies homotopy invariance, has a long
    exact sequence, satisfies excision for homotopy pushouts of pairs,
    and takes coproducts to products. Then there is a $B$-module $M$
    with a natural isomorphism
    \begin{align*}
      h^n(S,R) &\cong \TAQ^n(S,R;M) \\
      & = [\TAQ(R \to S \to B), \Sigma^n M]_{\Mod_B}
    \end{align*}
    of abelian groups.
  \end{description}
\end{quote}

For any $E_\infty$ ring spectrum $B$, algebras mapping to $B$ have
$\TAQ$-homology $\TAQ(\mb S \to R \to B)$, valued in the category of
$B$-modules. The \index{square-zero algebra}square-zero algebras
\[
  B \oplus M
\]
are representing objects for $\TAQ$-cohomology $\TAQ^*(R;M)$.

Representability allows us to construct and classify operations in
\index{cohomology operations!$\TAQ$}$TAQ$-cohomology by $B$-algebra maps between such square-zero
extensions.

\begin{proposition}
  Any element in $[\Sigma \Sym^2 M, N]_{\Mod_B}$ has a naturally
  associated map $B \oplus M \to B \oplus N$ of augmented commutative
  $B$-algebras and hence gives rise to a natural $\TAQ$-cohomology
  operation $\TAQ(-;M) \to \TAQ(-;N)$ for
  commutative algebras mapping to $B$.
\end{proposition}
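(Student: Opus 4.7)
The plan is to construct the augmented $B$-algebra map $f_\alpha \co B\oplus M \to B\oplus N$ by supplying coherent data at each level of the $E_\infty$-structure, with $\alpha$ entering at the quadratic level, and then to extract the $\TAQ$-cohomology operation by postcomposition.

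First I would note that specifying a map of augmented commutative $B$-algebras $B\oplus M \to B\oplus N$ amounts to giving a coherent homotopy-compatible map on underlying $B$-modules, a compatibility with the multiplications, and higher coherences. Since both source and target are square-zero extensions, the multiplications restricted to $\Sym^2 M$ in the source and to $\Sym^2 N$ in the target each come equipped with a preferred nullhomotopy. If the underlying map $M\to N$ is the zero map, then the datum at the quadratic level reduces to a nullhomotopy of the null composite $\Sym^2 M \to N$, and the space of such choices is a torsor over $\Map_B(\Sigma\Sym^2 M, N)$, whose $\pi_0$ is precisely $[\Sigma \Sym^2 M, N]_{\Mod_B}$.

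Given $\alpha$, I then construct $f_\alpha$ by taking the underlying $B$-module map to be zero and taking the quadratic coherence datum to be the modification of the canonical nullhomotopy by $\alpha$. Higher coherences on $\Sym^k M$ for $k\geq 3$ can be taken trivially: in $B\oplus N$ the product of any two elements of the augmentation ideal $N$ vanishes with a chosen nullhomotopy, so the compatibility at each higher level imposes no further obstruction and admits a canonical choice. Once $f_\alpha$ is in hand, the $\TAQ$-cohomology operation is induced by postcomposition: a class $x \in \TAQ^0(R; M)$ corresponds to an augmented $B$-algebra map $R \to B\oplus M$, and composing with $f_\alpha$ produces an augmented $B$-algebra map $R \to B\oplus N$, i.e., a class in $\TAQ^0(R; N)$; naturality in $R$ follows immediately from the naturality of composition.

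The main obstacle is verifying rigorously that the quadratic datum $\alpha$ assembles into a coherent morphism in the $\infty$-category of augmented commutative $B$-algebras, and that the resulting operation is independent of the choices made at higher coherence levels. This requires careful bookkeeping, most likely via an explicit bar or Postnikov resolution of $B\oplus M$ together with the framework of $\infty$-operads discussed in the preceding section, to ensure that the candidate map $f_\alpha$ lifts from the truncated data described above to a genuine map in the homotopy category of augmented commutative $B$-algebras.
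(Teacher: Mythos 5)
Your extraction of the operation from the map $f_\alpha$ is fine and matches the paper's setup: $B\oplus N$ represents $\TAQ$-cohomology, so postcomposition with any augmented $B$-algebra map $B\oplus M\to B\oplus N$ gives a natural operation. The gap is in the construction of $f_\alpha$ itself. The assertion that the higher coherences ``can be taken trivially'' and ``impose no further obstruction and admit a canonical choice'' is exactly the point that needs proof, and as stated it is not an argument: a nullhomotopy is data, not a property, and at each level $k\geq 3$ the space of compatible choices is a torsor over mapping spaces such as $\Map_{\Mod_B}(\Sigma\Sym^k M, N)$, which need not be connected, let alone contractible. Indeed, homotopy classes of augmented $B$-algebra maps $B\oplus M\to B\oplus N$ lifting the zero module map are computed by mapping the $\TAQ$ of the trivial square-zero extension $B\oplus M$ into $N$, and that object receives contributions from all symmetric powers of $M$, not just $\Sym^2 M$; so without a further device there is neither a canonical lift of $\alpha$ to a coherent $E_\infty$ map nor independence of the higher choices. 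Your closing paragraph concedes this, which means the central step of the proposition is left open.

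The paper closes this gap with a weight-grading trick rather than a direct coherence analysis. One regards $B\oplus M$ as a $\mb Z$-graded augmented $B$-algebra with $M$ in weight $1$ (as in the graded-algebra example earlier in the paper), and builds it by iterated pushouts along maps of free graded algebras, the first being $\Free^B_{E_\infty}(M)\leftarrow \Free^B_{E_\infty}(\Sym^2 M)\rightarrow B$, with all subsequent pushouts only affecting weights $3$ and higher. Placing $N$ in weight $2$, a graded map $B\oplus M\to B\oplus N$ must kill $M$ (nothing in weight $1$ in the target), the quadratic attaching datum contributes exactly $[\Sigma\Sym^2 M, N]_{\Mod_B}$, and everything in weight $\geq 3$ maps to zero for weight reasons, so no higher choices or obstructions arise; forgetting the grading then produces the desired ungraded map $f_\alpha$. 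If you want to salvage your approach, you should either import this grading (or some equivalent filtration of $\Free^B_{E_\infty}$ by symmetric powers) to make your ``canonical choice'' claim true, or else carry out the obstruction-theoretic bookkeeping you defer, which is considerably more work than the proposition requires.
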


\begin{proof}
  By viewing $B$ as concentrated in grading 0 and $M$ as concentrated
  in grading $1$, we can give a \index{graded algebra}$\mb Z$-graded construction (as in
  Example~\ref{ex:gradedalgebras}) of $B \oplus M$ as an iterated
  sequence of pushouts along maps of \index{free algebra}free algebras. The first such
  pushout is
  \[
    \Free^B_{E_\infty}(M) \leftarrow \Free^B_{E_\infty}(\Sym^2 M)
    \rightarrow B
  \]
  Further pushouts only alter gradings $3$ and higher.

  We now view $B \oplus N$ as graded by putting $N$ in grading $2$. We
  find that homotopy classes of maps of graded algebras $B \oplus M \to B \oplus N$ are
  equivalent to maps $\Sigma \Sym^2 M \to N$.
\end{proof}

\begin{example}
  Letting $M = B \otimes S^m$, we have
  \[
    \Sigma \Sym^2(M) \simeq B \otimes \Sigma^{m+1} \mb{RP}_m^\infty.
  \]
  Therefore, we get a map from the $B$-cohomology
  $B^n(\Sigma^{m+1}\mb{RP}_m^\infty)$ of \index{stunted projective space}stunted projective spaces to
  the group of natural cohomology operations $TAQ^m(-;B) \to
  TAQ^n(-;B)$.
\end{example}

\begin{remark}
  The fact that elements in the $B$-homology of stunted projective
  spaces produce \index{homotopy operations}homotopy operations while elements in their
  $B$-cohomology produce \index{cohomology operations!$\TAQ$}$\TAQ$-cohomology operations with a shift is
  a reflection of \index{Koszul duality}Koszul duality.
\end{remark}

\begin{example}
  Letting $M = (B \otimes S^q) \oplus (B \otimes S^r)$, and using the
  projection
  \begin{align*}
    \Sigma \Sym^2(B \otimes (S^q \oplus S^r))
    &\simeq \Sigma\Sym^2 (B \otimes S^q) \oplus \Sigma\Sym^2 (B
      \otimes S^r) \oplus \Sigma (B \otimes S^q \otimes S^r) \\
    &\to B \otimes S^{q+1+r},
  \end{align*}
  we get a binary operation
  \[
    [-,-]\co \TAQ^q(-;B) \times \TAQ^r(-;B) \to \TAQ^{q+1+r}(-;B)
  \]
  that (up to a normalization factor) we call the
  \emph{\index{bracket!$\TAQ$}$\TAQ$-bracket}.
\end{example}

\begin{example}
  If $B = H\mb F_2$, then there are
  $\TAQ$-cohomology operations
  \[
    R^a\co \TAQ^m(-;H\mb F_2) \to \TAQ^{m+a}(-;H\mb F_2)
  \]
  for $a \geq m+1$, and a bracket
  \[
    \TAQ^q(-;H\mb F_2) \times \TAQ^r(-;H\mb F_2) \to
    \TAQ^{q+1+r}(-;H\mb F_2).
  \]
  In this form, the operation $R^{a+1}$ is \index{Koszul dual}Koszul dual
  to $Q^{a}$, in the sense that nontrivial values of $R^{a+1}$ in
  $\TAQ$-cohomology detect relations on the operator $Q^a$ in
  homology. Similarly, the bracket in $\TAQ$ is Koszul dual to the
  multiplication.
\end{example}

The operations were constructed by to \index{Basterra}Basterra--\index{Mandell}Mandell
\cite{basterra-mandell-BP}. In further unpublished work, they showed
that these operations (and their odd-primary analogues) generate all
the natural operations on $\TAQ$-cohomology with values in $H\mb F_p$
and determined the relations between them. In particular, the
operations $R^a$ above satisfy the same \index{Adem relations}Adem relations that the
\index{Steenrod operations}Steenrod operations $Sq^a$ do; the \index{bracket!$\TAQ$}$\TAQ$-bracket has the structure of a
\index{restricted Lie algebra}shifted restricted Lie bracket, whose restriction is the bottommost
defined operation $R^a$.

\index{Basterra}Basterra--\index{Mandell}Mandell's proof uses a variant of the
\index{spectral sequence!Miller}Miller spectral sequence from
\cite{miller-delooping}. We will close out this section with a sketch
of how such spectral sequences are constructed, parallel to the
delooping spectral sequence from Remark~\ref{rmk:barsseq}.
\begin{proposition}
  Suppose that $R$ is an $E_\infty$ ring spectrum with a
  chosen map $R \to H\mb F_p$. Then there is a \index{spectral sequence!Miller}Miller spectral sequence
  \[
    AQ^{DL}_* (\pi_* (H\mb F_p \otimes R)) \Rightarrow \TAQ_*(\mb S \to R \to
    H\mb F_p),
  \]
  where the left-hand side are the \index{derived functors!nonabelian}
  nonabelian derived functors of an
  \index{indecomposables}indecomposables functor $Q$ that sends an
  augmented graded-commutative $\mb F_p$-algebra with Dyer--Lashof
  operations to the quotient of the augmentation ideal by all products
  and Dyer--Lashof operations.
\end{proposition}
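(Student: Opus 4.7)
The plan is to build this exactly parallel to the Miller delooping spectral sequence of Remark~\ref{rmk:barsseq}: resolve $R$ as an $E_\infty$-algebra by free $E_\infty$-algebras, apply $\TAQ$ levelwise using its representing property for free algebras, and then identify the resulting simplicial spectral sequence with the bar complex computing nonabelian derived functors of the indecomposables functor.

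First, I would form the monadic bar resolution
\[
  B_\bullet(\Free_{E_\infty}, \Free_{E_\infty}, R) \colon
  \dots \Free_{E_\infty}\Free_{E_\infty}\Free_{E_\infty} R \Rrightarrow
  \Free_{E_\infty}\Free_{E_\infty} R \Rightarrow \Free_{E_\infty} R,
\]
augmented to $R$. This has an extra degeneracy, so its geometric realization in $E_\infty$-algebras is equivalent to $R$. Applying the relative $\TAQ$ functor $\TAQ(\mathbb S \to - \to H\mb F_p)$ levelwise then yields a simplicial $H\mb F_p$-module whose realization computes $\TAQ(\mathbb S \to R \to H\mb F_p)$.

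Next, I would use the characterizing property of $\TAQ$ recalled in \S\ref{sec:TAQ}: for any $\mathbb S$-module $X$ mapping to $H\mb F_p$, one has $\TAQ(\mathbb S \to \Free_{E_\infty}(X) \to H\mb F_p) \simeq H\mb F_p \otimes X$, and the natural transformation $\Free_{E_\infty}\Free_{E_\infty}(X) \to \Free_{E_\infty}(X)$ goes, after applying $\TAQ$, to the map $H\mb F_p \otimes \Free_{E_\infty}(X) \to H\mb F_p \otimes X$ collapsing all summands of weight $\neq 1$. Iterating this with $X = \Free_{E_\infty}^{k} R$ turns our simplicial $\TAQ$-object into
\[
  \dots\, H\mb F_p \otimes \Free_{E_\infty}\Free_{E_\infty} R
  \Rrightarrow H\mb F_p \otimes \Free_{E_\infty} R
  \Rightarrow H\mb F_p \otimes R,
\]
with face maps induced by the monad structure and the augmentation $R \to H\mb F_p$.

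Now apply $\pi_*$. By the free-algebra calculation (Theorem of \cite{bmms-hinfty} stated in \S\ref{sec:classical}), $\pi_*(H\mb F_p \otimes \Free_{E_\infty} Y) \cong \mb Q_{E_\infty}(\pi_*(H\mb F_p \otimes Y))$ is the free augmented graded-commutative $\mb F_p$-algebra with Dyer--Lashof operations on $\pi_*(H\mb F_p \otimes Y)$. Writing $A = \pi_*(H\mb F_p \otimes R)$, the simplicial object of homotopy groups becomes the monadic bar resolution
\[
  B_\bullet(\mathrm{id}, \mb Q_{E_\infty}, A)
\]
in the category of augmented Dyer--Lashof algebras, with the forgetful-like functor on the left replaced by the indecomposables $Q$ (because $\TAQ$ collapses weight $\geq 2$). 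This is, by definition, the complex computing the nonabelian derived functors $AQ^{DL}_*(A)$ in the sense of Quillen. The standard simplicial-to-spectral-sequence machinery (skeletal filtration of the geometric realization) then gives a spectral sequence with $E^2$-page $AQ^{DL}_*(A)$ converging to $\pi_*$ of the realization, namely $\TAQ_*(\mathbb S \to R \to H\mb F_p)$.

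The main obstacle will be convergence and the identification of $E^2$: one needs the skeletal filtration on the realization of $\TAQ$ applied to the simplicial resolution to produce a conditionally convergent spectral sequence (this is where connectivity or pro-nilpotence hypotheses on $R \to H\mb F_p$ matter, as in \cite{miller-delooping}), and one needs to argue that the canonical map from the bar complex of the monad $\mb Q_{E_\infty}$ on $A$ to the Quillen-style derived functors of $Q$ is an equivalence. The latter follows formally because $\mb Q_{E_\infty}$-algebras form a monadic category of universal algebra type, where any simplicial resolution by free objects computes nonabelian derived functors; the former is the technical heart and is handled precisely as in Basterra--Mandell's work on iterated bar constructions and in the delooping spectral sequence of Remark~\ref{rmk:barsseq}.
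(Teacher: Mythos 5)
Your proposal is correct and follows essentially the same route as the paper's proof: resolve $R$ by the bar construction on the free $E_\infty$-algebra monad, use the extra degeneracy (together with the forgetful functor preserving geometric realizations) and the fact that $\TAQ$ preserves homotopy colimits and sends free algebras to $H\mb F_p \otimes (-)$, and then identify the simplicial object of homotopy groups with the bar complex $B(Q, \mb Q_{E_\infty}, H_* R)$ computing the nonabelian derived functors. The only difference is that you flag convergence and the identification of the $E_2$-term as remaining technical points, which the paper does not dwell on.
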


\begin{proof}
  We construct an augmented simplicial object:
  \[
    \cdots \Free_{E_\infty} \Free_{E_\infty} \Free_{E_\infty} R
    \Rrightarrow \Free_{E_\infty} \Free_{E_\infty} R \Rightarrow
    \Free_{E_\infty} R
    \to R.
  \]
  If $U$ is the forgetful functor, from commutative ring spectra
  mapping to $H\mb F_p$ to spectra mapping to $H\mb F_p$, this is the
  \index{bar construction}bar construction $B(\Free_{E_\infty}, U\Free_{E_\infty}, UR)$. The
  underlying simplicial spectrum
  $B(U\Free_{E_\infty}, U\Free_{E_\infty}, UR)$ has an extra
  degeneracy, so its \index{geometric realization is}geometric realization is equivalent to
  $R$. Moreover, the forgetful functor from $E_\infty$ rings to
  spectra preserves sifted homotopy colimits, and hence geometric
  realization because the simplicial indexing category is
  sifted. Therefore, applying the homotopy colimit preserving functor
  $\TAQ = \TAQ(\mb S \to (-) \to H\mb F_p)$ and the natural
  equivalence
  $\TAQ \circ \Free_{E_\infty}(R) \simeq H\mb F_p \otimes R$, we get
  an equivalence
  \[
    |B(H\mb F_p \otimes (-), U\Free_{E_\infty}, UR)| \simeq \TAQ(R).
  \]
  However, this bar construction is a simplicial object of the form
  \[
    \cdots H\mb F_p \otimes \Free_{E_\infty} \Free_{E_\infty} R \Rrightarrow
    H\mb F_p \otimes \Free_{E_\infty} R \Rightarrow H\mb F_p \otimes R.
  \]
  Taking homotopy groups, we get a simplicial object
  \[
    \mb Q_{E_\infty} \mb Q_{E_\infty} H_* R 
    \Rrightarrow \mb Q_{E_\infty} H_* R \Rightarrow H_* R.
  \]
  Moreover, the structure maps make this the bar construction
  \[
    B(Q, \mb Q_{E_\infty}, H_* R)
  \]
  that computes derived functors of $Q$ on graded-commutative algebras
  with Dyer--Lashof operations. Therefore, the spectral sequence
  associated to the geometric realization computes $\TAQ_*(\mb S \to R
  \to H\mb F_p)$ and has
  the desired $E_2$-term.
\end{proof}

\begin{remark}
  We can also apply cohomology rather than homology and get a
  spectral sequence computing topological Andr\'e--Quillen cohomology.
  
  This leaves open a hard algebraic part of \index{Basterra}Basterra--\index{Mandell}Mandell's
  work: actually calculating these derived functors, and in particular
  finding relations amongst the operations $R^a$ and the bracket
  $[-,-]$ that give a complete description of $\TAQ$-cohomology
  operations.
\end{remark}

\index{$\TAQ$|)}
\section{Further questions}

We will close this paper with some problems that we think are useful
directions for future investigation.

\begin{problem}
  Develop useful \index{obstruction theory}obstruction theories which can determine the
  existence of or maps between \index{$E_n$-algebra}$E_n$-algebras in a wide variety of
  contexts.
\end{problem}

The obstruction theory due to \index{Goerss}Goerss--\index{Hopkins}Hopkins
\cite{goerss-hopkins-summary} is the prototype for these results. In
unpublished work \cite{senger-obstr}, \index{Senger}Senger has given a development
of this theory for $E_\infty$-algebras where the obstructions occur in
nonabelian $\Ext$-groups calculated in the category of
graded-commutative rings with Dyer--Lashof operations and Steenrod
operations satisfying the Nishida relations, and provided tools for calculating with them. This played
a critical role in \cite{secondary,bpobstructions}.

In closely related situations, the tools available remain
rudimentary. For example, there is essentially no workable obstruction
theory for the construction of commutative rings of any type in
\index{equivariant stable homotopy theory}equivariant stable theory. Tools arising from the Steenrod algebra
have been essential in most of the deep results in homotopy theory,
such as the \index{Segal conjecture}Segal conjecture \cite{lin-davis-mahowald-adams} and the
\index{Sullivan conjecture}Sullivan conjecture \cite{miller-sullivanconjecture}. Without the
analogues, there is a limit to how much structure can be revealed.

\begin{problem}
  Give a modern redevelopment of \index{homology operations}homology operations for $E_\infty$
  \index{ring space}ring spaces and $E_n$ ring spaces.
\end{problem}

The observant reader may have noticed that, despite the rich structure
present, the principal material that we have referenced for $E_\infty$
ring spaces is several decades old. Several major advances have
happened in multiplicative stable homotopy theory since then, and the
author feels that there is still a great deal to be mined. Having this
material accessible to modern toolkits would be extremely useful.

For one example, the theory of $E_\infty$ ring spaces from the point
of view of symmetric spectra has been studied in detail by \index{Sagave}Sagave and
\index{Schlichtkrull}Schlichtkrull \cite{schlichtkrull-units,
  sagave-schlichtkrull-gradedunits,
  sagave-schlichtkrull-groupcompletion}. For another, the previous
emphasis on $E_\infty$ ring spaces should be tempered by the variety
of examples that we now know only admit $A_\infty$ or $E_n$ ring
structures.

\begin{problem}
  Give a unified theory of graded Hopf algebras and
  \index{Hopf ring}Hopf rings, capable of encoding some combination of
  non-integer gradings, \index{power operations}power operations, \index{group-completion}group-completion theorems, and the interaction with the unit.
\end{problem}

\index{Ravenel}Ravenel--\index{Wilson}Wilson's theory of Hopf rings is integer-graded. We now know
many examples---\index{motivic homotopy theory}motivic homotopy
theory, equivariant homotopy theory, \index{$K(n)$-local}$K(n)$-local
theory, modules over \index{module category!of an $E_n$-algebra}$E_n$ ring spectra---that may have natural gradings of
a much wider variety than this, such as a \index{Picard group}Picard
group. Moreover, multiplicative theory should involve much more
structure: we should have a sequence of spaces graded not just by a
Picard group, but by the \index{Picard space}Picard space that also encodes structure
nontrivial higher interaction between gradings and the unit group.

\begin{problem}
  Give a precise general description of the \index{Koszul duality}
  Koszul duality relationship between homotopy operations
  and $\TAQ$-cohomology
  \index{cohomology operations!$\TAQ$}operations.  Give a
  complete construction of the algebra of operations on
  $\TAQ$-cohomology for $E_n$-algebras with coefficients in $Hk$, for
  $k$ a commutative ring. Give complete descriptions of the
  $\TAQ$-cohomology for a large library of \index{Eilenberg--Mac Lane spectrum}Eilenberg--Mac Lane spectra
  $Hk$ and \index{Morava}Morava's \index{forms of $K$-theory}forms of $K$-theory.
\end{problem}

Because $\TAQ$-cohomology governs the construction of ring spectra via
their \index{Postnikov tower}Postnikov tower, essentially any information that we can provide
about these objects is extremely useful.

\begin{problem}
  Determine an algebro-geometric expression for power operations and
  their relationship to the \index{Steenrod operations}Steenrod operations. Do the same for the
  operations which appear in the Hopf ring associated to an $E_\infty$
  ring space.
\end{problem}

At the prime $2$, it has been known for some time that the action of
the Steenrod algebra can be concisely packaged as a coaction of the
\index{dual Steenrod algebra}dual Steenrod algebra, a Hopf algebra corresponding to the group
scheme of automorphisms of the additive \index{formal group}formal group over $\mb
F_2$. The Dyer--Lashof operations on infinite loop spaces generate an
algebra analogous to the Steenrod algebra, and its dual was described
by \index{Madsen}Madsen \cite{madsen-dyerlashof}; the result is closely related to
\index{Dickson invariants}Dickson invariants. However, the full action
of the Dyer--Lashof operations or the interaction between the
Dyer--Lashof algebra and the Steenrod algebra does not yet have a
geometric packaging.

\begin{conjecture}
  For \index{Lubin--Tate theory}Lubin--Tate cohomology theories $E$
  and $F$ of height $n$, there is a natural algebraic structure
  parametrizing operations from continuous $E$-homology to continuous
  $F$-homology for certain $E_\infty$ ring spectra, expressed in terms
  of the algebraic geometry of \emph{\index{isogenies}isogenies} of \index{formal group}formal groups.

  This is complete: there is an \index{obstruction theory}obstruction theory for the
  construction of and mapping between \index{$K(n)$-local}$K(n)$-local
  $E_\infty$ \index{ring spectrum}ring
  spectra whose algebraic input is completed $E$-homology equipped
  with these operations.
\end{conjecture}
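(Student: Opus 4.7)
The plan is to combine Strickland's geometry of subgroup schemes of the Lubin--Tate formal group, Rezk's monadic theory of power operations for Morava $E$-theory, and the Goerss--Hopkins obstruction theory adapted to the $K(n)$-local category. All three ingredients exist in the literature in the case $E=F$; the work lies in assembling them into a bivariant framework and in nailing down the precise class of ``certain $E_\infty$ ring spectra'' for which completeness holds.

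First I would define the ambient algebraic object. Strickland's theorem identifies $E^0(B\Sigma_{p^k})$, modulo the ideal generated by transfers from proper subgroups, with the ring of functions on the moduli of subgroups of order $p^k$ in the Lubin--Tate formal group over $\Spec E^0$. These rings assemble, via the axioms discussed in \S\ref{sec:operators}, into an ``isogeny plethory'' $\mathcal{P}_E$ whose algebras carry precisely the additive and multiplicative power operations on continuous $E^0$-cohomology; Rezk has essentially carried this out when $E=F$. To treat the bivariant case, I would construct a $(\mathcal{P}_F,\mathcal{P}_E)$-bimodule $\mathcal{P}_{E,F}$ out of the continuous $F$-homology of the free $K(n)$-local $E_\infty$-algebras on $S^m$, and show its composition structure is encoded by pullback along the map of isogeny moduli induced by a lift of $\Spec F^0 \to \Spec E^0$ to the universal deformations.

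Second, I would establish representability in the $K(n)$-local setting. As in \S\ref{sec:pro-representability}, completed $E$-homology on the $\infty$-category of $K(n)$-local $E_\infty$-algebras is pro-represented by free $K(n)$-local $E_\infty$-algebras on Spanier--Whitehead duals of a cofinal system of finite spectra. By the Yoneda lemma together with the Hopkins--Miller--Strickland computation of these free algebras in terms of $E^0(B\Sigma_{p^k})$ after $L$-completion, every continuous natural operation $E^\vee_m \to F^\vee_n$ is represented by an element of $\mathcal{P}_{E,F}$, and composition of operations matches the plethory composition under the isogeny description. This gives the first half of the conjecture.

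Third, for completeness as an obstruction theory, I would adapt the Goerss--Hopkins machine \cite{goerss-hopkins-summary}: given a target $R$, resolve it simplicially by free $K(n)$-local $E_\infty$-algebras on pro-finite free modules, apply mapping space, and identify the $E^2$-term of the resulting Bousfield--Kan spectral sequence with nonabelian derived functors of morphisms in the category of $\mathcal{P}_E$-algebras on completed homology. The hard part---and the main obstacle---is convergence of this spectral sequence and the precise identification of the $E^2$-page. The subtlety arises because $L$-completion is not exact and interacts non-trivially with the nonabelian derived functors; this is very likely the reason the conjecture hedges with ``certain'' $E_\infty$ ring spectra, as the cleanest statement presumably requires $E^\vee_*R$ to be pro-free, or at least finitely generated as a continuous $\mathcal{P}_E$-algebra. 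Pinning down this class and ruling out pathological convergence failures without hypotheses that trivialize the conjecture is where the real difficulty lies.
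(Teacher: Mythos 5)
The statement you are addressing is a \emph{conjecture}; the paper offers no proof of it, and in fact explicitly flags why none is currently available: the surrounding discussion cites Strickland's observation that isogenies of formal groups are the natural home for combining cohomology operations and power operations, but notes that ``there are important details about formal topologies which have never been resolved,'' and the footnote warns that already at height $1$ there are difficult issues involving left-derived functors of completion. Your proposal assembles exactly the ingredients the paper has in mind---Strickland's identification of $E^0(B\Sigma_{p^k})$ modulo transfers with functions on the moduli of subgroups, Rezk's monadic/plethory formulation of power operations for Morava $E$-theory, pro-representability of completed homology by free $K(n)$-local $E_\infty$-algebras, and a Goerss--Hopkins-style resolution---so as a description of the intended program it is faithful to the paper's point of view.

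But it is not a proof, and the gap is precisely at the points you yourself defer. The construction of the bivariant object $\mathcal{P}_{E,F}$ and the claim that its composition structure is ``encoded by pullback along the map of isogeny moduli'' is exactly where the unresolved formal-topology and derived-completion issues live: completed $E$-homology of free $K(n)$-local $E_\infty$-algebras is only well behaved under pro-freeness or flatness hypotheses, $L$-completion is not exact, and no one has established that the resulting continuous operations are captured by an algebro-geometric object of the kind you describe, rather than by some derived or pro-completed refinement of it. Likewise, the identification of the $E^2$-term and the convergence of the Bousfield--Kan spectral sequence in the third step are open, not merely technical; you acknowledge this (``the hard part''), but acknowledging the obstacle is not the same as overcoming it, and the choice of the class of ``certain'' $E_\infty$ ring spectra cannot be pinned down until those analytic questions about completion are settled. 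So your text should be read as a reasonable expansion of the conjecture into a research plan, consistent with the paper's intent, rather than as a verification of it.
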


In this paper we have not really touched on the extensive study of
power operations in chromatic homotopy theory
(cf. \cite{stapleton-character,barthel-beaudry-chromatic}). Given
Lubin--Tate cohomology theories $E$ and $F$ associated to formal
groups of height $n$ at the prime $p$, we have both \index{cohomology operations}cohomology operations and power operations. In \cite{hovey-operations} the
algebra of cohomology operations is expressed in terms of isomorphisms
of formal groups. Extensive work of \index{Ando}Ando, \index{Strickland}Strickland, and \index{Rezk}Rezk has
shown that \index{power operations}power operations are expressed in terms of quotient
operations for subgroups of the formal groups. It has been known for
multiple decades \cite[\S 28]{strickland-fpfp} that the natural home
combining these two types of operations is the theory of
\emph{isogenies} of formal groups. However, there are important
details about formal topologies which have never been
resolved.\footnote{The reader should be advised that, even at height
  1, there are difficult issues with $E$-theory here involving
  left-derived functors of completion.}

\begin{problem}
  Determine the natural \index{instability relations}instability
  relations for operations in \emph{unstable} \index{elliptic cohomology}elliptic cohomology and
  in unstable Lubin--Tate cohomology.
\end{problem}

\index{Strickland}Strickland states that isogenies are a natural interpretation for
unstable cohomology operations in $E$-theory. However, isogenies
encode the analogue of the cohomological \index{Steenrod operations}Steenrod operations, the
multiplicative Dyer--Lashof operations, and the \index{Nishida relations}Nishida relations
between them. They do \emph{not} encode any analogue of the
\index{instability relations}instability relation $Sq^n = Q^{-n}$ that we see in the cohomology of
spaces.

In \index{chromatic homotopy theory}chromatic theory, our only accessible example so far is
$K$-theory. For $p$-completed \index{$K$-theory}$K$-theory, the cohomology operations
are generated by the \index{Adams operations}Adams operations $\psi^k$ for
$k \in \mb Z_p^\times$. For torsion-free algebras, the \index{power
operations}power
operations are controlled by the operation $\psi^p$ and its
congruences \cite{hopkins-k1-local, rezk-wilkerson}. The
\index{unstable operations}\emph{unstable} operations in the $K$-theory of spaces, by contrast,
arise from the algebra of \index{symmetric polynomials}symmetric polynomials and are essentially
governed by the $\psi^n$ for $n \in \mb N$; the fact that the other
$\psi^k$ are determined by these enforces some form of
continuity. This is also closely tied to the question of whether there
are geometric interpretations of some type for elliptic cohomology
theories or Lubin--Tate cohomology theories.\footnote{One possible viewpoint is that we could interpret $\mb N$ as the
monoid of endomorphisms of the \emph{multiplicative monoid} $M_1$,
which contains the \index{unit group!of a ring spectrum}unit group
$GL_1$.}
\index{$GL_1$|see{unit group of a ring spectrum}}

\begin{problem}
  Determine a useful way to encode \index{secondary operations}secondary operation structures on
  $E_\infty$ or $E_n$ rings.
\end{problem}

In the case of secondary Steenrod operations, there is a useful
formulation due to \index{Baues}Baues of an extension of the Steenrod algebra that
can be used to encode all of the secondary operation structure
\cite{baues-secondary, nassau-secondary}. No such systematic
descriptions are known for secondary Dyer--Lashof operations,
especially since the Dyer--Lashof operations are expressed in a
more complicated way than the action of an algebra on a module.

\begin{problem}
  Determine useful relationships between the homotopy types of an
  $E_n$ ring spectrum, the \index{unit group}unit group $GL_1(R)$ and the
  \index{Picard space}Picard space $\Pic(R)$, and the spaces
  $BGL_n(R)$.
\end{problem}

This is closely tied to \index{orientation theory}orientation theory, \index{algebraic $K$-theory}algebraic $K$-theory, and
the study of spaces involved in \index{surgery theory}surgery theory.

Investigations in these directions due to \index{Mathew}Mathew--\index{Stojanoska}Stojanoska revealed
that there is a nontrivial relationship between the $k$-invariants for
$R$ and the unit spectrum $gl_1(R)$ at the edge of the stable range at
the prime 2 \cite{mathew-stojanoska-pictmf}, and forthcoming work of
\index{Hess}Hess has shown that this relation can be recovered from the
\index{mixed Cartan formula}mixed Cartan formula. The odd-primary analogues of this are not yet
known.

\begin{problem}
  Find an odd-primary formula for the \index{mixed Adem relations}mixed Adem relations similar to
  the Kuhn--Tsuchiya formula.
\end{problem}

There is a description of the mixed \index{Adem relations!mixed}Adem
relations \cite{cohen-lada-may-homology}, valid at any prime, but it
is difficult to apply in concrete examples. The 2-primary formula
described in \S\ref{sec:ring-spaces} is much more direct; it was
originally stated by \index{Tsuchiya}Tsuchiya and proven by \index{Kuhn}Kuhn
\cite{kuhn-jameshopfhomology}. There is no known odd-primary analogue
of this formula.

\bibliographystyle{plain}
\bibliography{../masterbib}

\end{document}